  \pgfplotsset{compat = 1.13,
    colormap name = viridis,
    unbounded coords = jump
  }
  \tikzset{external/system call = {%
    pdflatex \tikzexternalcheckshellescape
      -halt-on-error
      -interaction=batchmode
      -jobname "\image" "\texsource"}}
\definecolor{matlabBlue}{HTML}{0072BD}
\definecolor{matlabPurple}{HTML}{7E2F8E}
\definecolor{matlabGreen}{HTML}{77AC30}
\definecolor{matlabBrown}{HTML}{A2142F}
\newcommand{%
  \tikzexternalenable%
  \tikzsetnextfilename{}%
  \input{graphics/.tikz}%
  \tikzexternaldisable%
}[1]{%
  \tikzexternalenable%
  \tikzsetnextfilename{#1}%
  \input{graphics/#1.tikz}%
  \tikzexternaldisable%
}
\renewcommand{\rm}[1]{\ensuremath{\mathrm{#1}}}
\renewcommand{\sf}[1]{\ensuremath{\mathsf{#1}}}
\newcommand{\trans}{\ensuremath{\mkern-1.5mu\mathsf{T}}}
\newcommand{\herm}{\ensuremath{\mathsf{H}}}
\newcommand{\R}{\ensuremath{\mathbb{R}}}
\newcommand{\C}{\ensuremath{\mathbb{C}}}
\renewcommand{\i}{\ensuremath{\mathfrak{i}}}
\newcommand{\cB}{\ensuremath{\mathcal{B}}}
\newcommand{\cC}{\ensuremath{\mathcal{C}}}
\newcommand{\cK}{\ensuremath{\mathcal{K}}}
\newcommand{\cN}{\ensuremath{\mathcal{N}}}
\newcommand{\hcB}{\ensuremath{\widehat{\mathcal{B}}}}
\newcommand{\hcC}{\ensuremath{\widehat{\mathcal{C}}}}
\newcommand{\hcK}{\ensuremath{\widehat{\mathcal{K}}}}
\newcommand{\hcN}{\ensuremath{\widehat{\mathcal{N}}}}
\newcommand{\hG}{\ensuremath{\widehat{G}}}
\newcommand{\hy}{\ensuremath{\hat{y}}}
\newcommand{\hb}{\ensuremath{\hat{b}}}
\newcommand{\tcN}{\ensuremath{\widetilde{\mathcal{N}}}}
\newcommand{\tG}{\ensuremath{\widetilde{G}}}
\newcommand{\tu}{\ensuremath{\tilde{u}}}
\newcommand{\ty}{\ensuremath{\tilde{y}}}
\newcommand{\htG}{\ensuremath{\widehat{\tG}}}
\newcommand{\margs}[2]{\ensuremath{(#1\,|\,#2)}}
\newcommand{\mGs}{\ensuremath{\boldsymbol{\mathsf{G}}}}
\newcommand{\hmGs}{\ensuremath{\boldsymbol{\mathsf{\hG}}}}
\newcommand{\mG}[3]{\ensuremath{\mGs_{#1}\margs{#2}{#3}}}
\newcommand{\hmG}[3]{\ensuremath{\hmGs_{#1}\margs{#2}{#3}}}
\newcommand{\mGf}[1]{\ensuremath{\mGs_{1}(#1)}}
\newcommand{\hmGf}[1]{\ensuremath{\hmGs_{1}(#1)}}
\newcommand{\mN}[2]{\ensuremath{\boldsymbol{\sf{N}}\margs{#1}{#2}}}
\newcommand{\hmN}[2]{\ensuremath{\boldsymbol{\sf{\widehat{N}}}\margs{#1}{#2}}}
\DeclareMathOperator{\mspan}{span}
\DeclareMathOperator{\err}{err}
\newcommand{\mtxint}{\sf{MtxInt}}
\newcommand{\bwtint}{\sf{BwtInt}}
\newcommand{\sftint}{\sf{SftInt}}
\newcommand{\sttint}{\sf{SttInt}}
\theoremstyle{plain}\newtheorem{theorem}{Theorem}
\theoremstyle{plain}\newtheorem{corollary}{Corollary}
\theoremstyle{definition}\newtheorem{remark}{Remark}
\theoremstyle{definition}\newtheorem{problem}{Problem}
\Crefname{problem}{Problem}{Problems}
\begin{document}

%%%%%%%%%%%%%%%%%%%%%%%%%%%%%%%%%%%%%%%%%%%%%%%%%%%%%%%%%%%%%%%%%%%%%%%%%%%%%%%%
% PAPER INFORMATION.                                                           %
%%%%%%%%%%%%%%%%%%%%%%%%%%%%%%%%%%%%%%%%%%%%%%%%%%%%%%%%%%%%%%%%%%%%%%%%%%%%%%%%

\title{A unifying framework for tangential interpolation of structured bilinear
  control systems}
  
\author[$\ast$]{Peter Benner}
\affil[$\ast$]{Max Planck Institute for Dynamics of Complex Technical
  Systems, Sandtorstra{\ss}e 1, 39106 Magdeburg, Germany.
  \email{benner@mpi-magdeburg.mpg.de}, \orcid{0000-0003-3362-4103},
  \authorcr \itshape
  Otto von Guericke University Magdeburg, Faculty of Mathematics,
  Universit{\"a}tsplatz 2, 39106 Magdeburg, Germany.
  \email{peter.benner@ovgu.de}
}

\author[$\ddagger$]{Serkan Gugercin}
\affil[$\ddagger$]{%
  Department of Mathematics and Division of Computational Modeling and Data
  Analytics, Academy of Data Science, Virginia Tech,
  Blacksburg, VA 24061, USA.\authorcr
  \email{gugercin@vt.edu}, \orcid{0000-0003-4564-5999}
}

\author[$\dagger$]{Steffen W. R. Werner}
\affil[$\dagger$]{Courant Institute of Mathematical Sciences, New York
  University, New York, NY 10012, USA.\authorcr
  \email{steffen.werner@nyu.edu}, \orcid{0000-0003-1667-4862}
}

\shorttitle{Structured bilinear tangential interpolation}
\shortauthor{Benner, Gugercin, Werner}
\shortdate{2022-06-03}
\shortinstitute{}
  
\keywords{%
  model order reduction,
  bilinear systems,
  tangential interpolation,
  structure-pre\-ser\-ving approximation
}

\msc{%
  30E05, % Moment problems and interpolation problems in the complex plane
  34K17, % Transformation and reduction of functional-differential equations
  65D05, % Numerical interpolation
  93A15, % Large-scale systems
  93C10  % Nonlinear systems in control theory
}
  
\abstract{%
  In this paper, we consider the structure-preserving model order reduction
  problem for multi-input/multi-output bilinear control systems by tangential 
  interpolation.
  We propose a new type of tangential interpolation problem for structured
  bilinear systems, for which we develop a new structure-preserving
  interpolation framework.
  This new framework extends and generalizes different formulations of
  tangential interpolation for bilinear systems from the literature and also
  provides a unifying framework.
  We then derive explicit conditions on the projection spaces to
  enforce tangential interpolation in different settings, including
  conditions for tangential Hermite interpolation.
  The analysis is illustrated by means of three numerical examples.
}

\novelty{%
  We propose a new formulation of the tangential interpolation problem for
  structured multi-input/multi-output bilinear control systems.
  We formulate conditions on the projection spaces to enforce
  structure-preserving tangential interpolation in this new framework, which
  also allow for Hermite interpolation, and generalize established formulations
  of tangential interpolation from the literature.
}

\maketitle

%%%%%%%%%%%%%%%%%%%%%%%%%%%%%%%%%%%%%%%%%%%%%%%%%%%%%%%%%%%%%%%%%%%%%%%%%%%%%%%%
% INTRODUCTION.                                                                %
%%%%%%%%%%%%%%%%%%%%%%%%%%%%%%%%%%%%%%%%%%%%%%%%%%%%%%%%%%%%%%%%%%%%%%%%%%%%%%%%

\pagebreak

\section{Introduction}%
\label{sec:intro}

Modeling of various real-world applications, e.g., biological, electrical or
population dynamics,  results in bilinear control
systems~\cite{Moh70, Moh73, AlbFB93, SapSH19, QiaZ14}.
Those bilinear systems usually inherit special structures based on their
underlying physical meaning.
For example, in the case of bilinear mechanical models, one obtains a
second-order bilinear control system of the form
\begin{align} \label{eqn:bsosys}
  \begin{aligned}
    M \ddot{q}(t) + D \dot{q}(t) + K q(t) & = \sum\limits_{j = 1}^{m}
      N_{\rm{p},j}q(t)u_{j}(t) + \sum\limits_{j = 1}^{m}
      N_{\rm{v},j}\dot{q}(t)u_{j}(t) + B_{\rm{u}}u(t),\\
    y(t) & = C_{\rm{p}}q(t) + C_{\rm{v}}\dot{q}(t),
  \end{aligned}
\end{align}
where $q(t) \in \R^{n} $ are the (internal) degrees of freedom;
$u(t) \in \R^{n}$ and  $y(t) \in \R^{p}$ are, respectively, the inputs and
outputs of the system;
$M, D, K, N_{\rm{p},j}, N_{\rm{v},j} \in \R^{n \times n}$ for all
$j = 1, \ldots, m$, $B_{\rm{u}} \in \R^{n \times m}$ and
$C_{\rm{p}}, C_{\rm{v}} \in \R^{p \times n}$.
Due to the usual request for high-fidelity modeling, the number
of differential equations, $n$, describing the dynamics of systems
as in~\cref{eqn:bsosys}, quickly increases.
This often results in a high demand for computational resources such as time
and memory.
One remedy is model order reduction: a new, \emph{reduced}, system is created, 
consisting of a significantly smaller number of differential equations than the 
original one while still accurately approximating the input-to-output behavior.
Then one can use this lower order approximation as a surrogate model for faster 
simulations or within algorithms for design optimization and controller
synthesis.

In the case of \emph{unstructured} bilinear systems with the state-space form
\begin{align} \label{eqn:bsys}
  \begin{aligned}
    E \dot{x}(t) & = A x(t) + \sum\limits_{j = 1}^{m}N_{j}x(t)u_{j}(t)
      + B u(t),\\
    y(t) & = C x(t),
  \end{aligned}
\end{align}
where $E, A, N_{j} \in \R^{n \times n}$ for all $j = 1, \ldots, m$,
$B \in \R^{n \times m}$ and $C \in \R^{p \times n}$,
there already exist different model reduction methodologies, 
e.g., bilinear balanced truncation~\cite{HsuDC83, AlbFB93, BenD11},
different types of interpolation approaches for the underlying
multi-variate transfer functions in the frequency
domain~\cite{BaiS06, ConI07, FenB07, BreD10, AntBG20}, 
complete Volterra series interpolation~\cite{ZhaL02, BenB12, FlaG15},
and the bilinear Loewner framework~\cite{AntGI16, GosPBetal19}.
For \emph{structured} bilinear control systems as in~\cref{eqn:bsosys},
recently~\cite{BenGW21a} developed the structure-preserving interpolation
framework where interpolation for multi-input/multi-output (MIMO) systems was
enforced as for single-input/single-output (SISO) systems, i.e., using full
matrix interpolation.
One of our major contributions in this paper is to devise a proper interpolation
framework for structured MIMO systems.

Reduction of MIMO bilinear systems is an intricate problem and only a few of
the aforementioned approaches provide suitable extensions  for model reduction
of MIMO structured bilinear systems.
The lack of a proper extension  is especially persistent for subsystem
interpolation since enforcing matrix interpolation results in quickly increasing
reduced-order dimension.
For MIMO linear dynamical systems, the concept of tangential interpolation 
resolves this issue~\cite{GalVV04} by interpolating the matrix-valued
transfer function along selected direction vectors. 
It is important to note that the optimal approximation in a specific, namely
the $\mathcal{H}_{2}$-norm, satisfies tangential interpolation, not matrix
interpolation,~\cite{AntBG20}.
For interpolatory model reduction of bilinear systems, it is not clear so far
what the  proper extension of tangential interpolation would be.
For the unstructured case, \cite{BenBD11, RodGB18}~provide one potential
extension where only certain blocks of the subsystem transfer functions are
employed in tangential interpolation.
In this paper, we will introduce a new unifying framework for tangential
interpolation of structured bilinear systems, inspired by the original ideas of
tangential interpolation for matrix-valued functions~\cite{BalGR90}.
This new framework will cover different extensions of tangential interpolation
to bilinear systems under one umbrella.
Especially, it will allow us to formulate a direct extension of the ideas
from~\cite{BenBD11, RodGB18} to the structured system case.

Parts of the theoretical results presented here were derived in the course of
writing the dissertation of the corresponding author~\cite{Wer21}.

The rest of the paper is organized as follows:
In \Cref{sec:basics}, we briefly recall the theory of bilinear systems and
Volterra series, introduce the structured transfer functions considered in this
paper and revisit the tangential interpolation problem for linear dynamical
systems.
In \Cref{sec:tang}, we will motivate our new unifying tangential interpolation
framework and provide conditions on underlying projection spaces to satisfy
interpolation conditions in this framework.
Three benchmark examples are presented in \Cref{sec:examples} that illustrate
the established theory by comparing different interpolatory model reduction 
approaches for (structured) MIMO bilinear systems, followed by the conclusions
in \Cref{sec:conclusions}.

%%%%%%%%%%%%%%%%%%%%%%%%%%%%%%%%%%%%%%%%%%%%%%%%%%%%%%%%%%%%%%%%%%%%%%%%%%%%%%%%
% SETUP.                                                                       %
%%%%%%%%%%%%%%%%%%%%%%%%%%%%%%%%%%%%%%%%%%%%%%%%%%%%%%%%%%%%%%%%%%%%%%%%%%%%%%%%

\section{Mathematical preliminaries}%
\label{sec:basics}

In this section, we briefly review various system-theoretic concepts for
bilinear systems and the idea of tangential interpolation for linear systems.

%%%%%%%%%%%%%%%%%%%%%%%%%%%%%%%%%%%%%%%%%%%%%%%%%%%%%%%%%%%%%%%%%%%%%%%%%%%%%%%%

\subsection{Frequency-domain representation of structured bilinear systems}

For the unstructured bilinear system~\cref{eqn:bsys}, define
$N = \begin{bmatrix} N_{1} & \ldots & N_{m} \end{bmatrix}$.
Assume $E$  to be invertible and zero initial conditions, i.e., $x(0) = 0$.
Then, the output of~\cref{eqn:bsys} can be expressed, under some mild
assumptions, in terms of a Volterra series~\cite{Rug81}, i.e.,
\begin{align*}
  y(t) & = \sum\limits_{k = 1}^{\infty} \int\limits_{0}^{t}
    \int\limits_{0}^{t_{1}} \ldots \int\limits_{0}^{t_{k-1}}
    g_{k}(t_{1}, \ldots, t_{k}) 
    \left( u(t - \sum\limits_{i = 1}^{j}t_{i}) \otimes \cdots \otimes
    u(t - t_{1}) \right) \mathrm{d}t_{k} \cdots \mathrm{d}t_{1},
\end{align*}
where $g_{k}$, for $k \geq 1$, is the $k$-th regular Volterra kernel given by
\begin{align} \label{eqn:voltime}
  \begin{aligned}
    g_{k}(t_{1}, \ldots, t_{k}) & = Ce^{E^{-1}At_{k}} \left(
      \prod\limits_{j = 1}^{k-1} (I_{m^{j-1}} \otimes E^{-1}N)
      (I_{m^{j}} \otimes e^{E^{-1}At_{k-j}}) \right)\\
    & \quad{}\times{}
      (I_{m^{k-1}} \otimes E^{-1}B),
  \end{aligned}
\end{align}
where $I_{m^{j}}$ denotes the identity matrix of size $m^{j}$ and $\otimes$ is
the Kronecker product.
Using the multivariate Laplace transform~\cite{Rug81}, the regular Volterra
kernels~\cref{eqn:voltime} yield a representation of~\cref{eqn:bsys}
in the frequency domain by the so-called multivariate regular transfer functions
\begin{align} \label{eqn:btf}
  \begin{aligned}
    G_{k}(s_{1},\ldots,s_{k}) & = C(s_{k}E - A)^{-1} \left(
      \prod\limits_{j = 1}^{k-1} (I_{m^{j-1}} \otimes N)
      (I_{m^{j}} \otimes (s_{k-j}E - A)^{-1}) \right)\\
    & \quad{}\times{}
      (I_{m^{k-1}} \otimes B),
  \end{aligned}
\end{align}
with $s_{1}, \ldots, s_{k} \in \C$.

Motivated by the structured linear case~\cite{BeaG09} and structured
bilinear systems as in~\cref{eqn:bsosys}, \cite{BenGW21a}~introduced the
frequency domain representation of structured bilinear systems in terms of
the \emph{structured regular subsystem transfer functions} of the form
\begin{align} \label{eqn:tf}
  \begin{aligned}
    G_{k}(s_{1}, \ldots, s_{k}) & = \cC(s_{k})\cK(s_{k})^{-1} \left(
      \prod\limits_{j = 1}^{k-1} \big( I_{m^{j-1}} \otimes \cN(s_{k-j}) \big)
      \big( I_{m^{j}} \otimes \cK(s_{k-j})^{-1} \big) \right)\\
    & \quad{}\times{}
      \big( I_{m^{k-1}} \otimes \cB(s_{1}) \big),
  \end{aligned}
\end{align}
for $k \geq 1$, where
$\cC(s)\colon \C \rightarrow \C^{p \times n}$,
$\cK(s)\colon \C \rightarrow \C^{n \times n}$,
$\cB(s)\colon \C \rightarrow \C^{n \times m}$, and
$\cN_{j}\colon \C \rightarrow \C^{n \times n}$
for $j = 1, \ldots, m$  are matrix-valued  functions, and
 $\cN(s) = \begin{bmatrix} \cN_{1}(s) & \ldots &
\cN_{m}(s) \end{bmatrix}$.
This general frameworks contains the unstructured bilinear
systems~\cref{eqn:bsys} as its special case where
\begin{align*}
  \begin{aligned}
    \cC(s) & = C, &
      \cK(s) & = sE - A, &
      \cB(s) & = B, &
      \cN(s) & = \begin{bmatrix} N_{1} & \ldots & N_{m} \end{bmatrix},
  \end{aligned}
\end{align*}
Also, it recovers the bilinear second-order system~\cref{eqn:bsosys} by
choosing
\begin{align*}
  \begin{aligned}
    \cC(s) & = C_{\rm{p}} + s C_{\rm{v}}, &
      \cK(s) & = s^{2} M + s D + K, &
      \cB(s) & = B_{\rm{u}}, &
      \cN(s) & = N_{\rm{p}}  + s N_{\rm{v}},
  \end{aligned}
\end{align*}
where $N_{\rm{p}} = \begin{bmatrix} N_{\rm{p}, 1} & \ldots & N_{\rm{p}, m} 
\end{bmatrix}$ and $N_{\rm{v}} = \begin{bmatrix} N_{\rm{v}, 1} & \ldots & 
N_{\rm{v}, m} \end{bmatrix}$.
We refer the reader to~\cite{BenGW21a} for a more detailed derivation of 
structured multivariate transfer functions and other structured examples.

For the full-order structured bilinear control system with subsystem transfer
functions~\cref{eqn:tf}, we will construct structure-preserving reduced
bilinear systems using Petrov-Galerkin projection:
Given two model reduction basis matrices $W, V \in \C^{n \times r}$ for the test
and trial spaces, respectively, with $r \ll n$, the reduced-order quantities
are given by
\begin{align} \label{eqn:proj}
  \begin{aligned}
    \hcC(s) & = \cC(s) V,
      & \hcK(s) & = W^{\herm} \cK(s) V,
      & \hcB(s) & = W^{\herm} \cB(s),~\text{and}
      & \hcN_{j}(s) & = W^{\herm} \cN_{j}(s) V
  \end{aligned}
\end{align}
for $j = 1, \ldots, m$, where $(\cdot)^{\herm}$ denotes the conjugate transpose.
The corresponding reduced-order system $\hG$ is then given by the underlying
reduced-order matrices from~\cref{eqn:proj} and the corresponding multivariate
transfer functions
\begin{align} \label{eqn:tfrom}
  \begin{aligned}
    \hG_{k}(s_{1}, \ldots, s_{k}) & = \hcC(s_{k})\hcK(s_{k})^{-1} \left(
      \prod\limits_{j = 1}^{k-1} \big( I_{m^{j-1}} \otimes \hcN(s_{k-j}) \big)
      \big( I_{m^{j}} \otimes \hcK(s_{k-j})^{-1} \big) \right)\\
    & \quad{}\times{}
      (I_{m^{k-1}} \otimes \hcB(s_{1})),
  \end{aligned}
\end{align}
for $k \geq 1$.
For example, for the mechanical bilinear system in~\cref{eqn:bsosys}, the
reduced-order model will have the form
\begin{align*}
  \begin{aligned}
    \widehat{M} \ddot{\hat{q}}(t) + \widehat{D} \dot{\hat{q}}(t) + 
      \widehat{K} \hat{q}(t) & = \sum\limits_{j = 1}^{m}
      \widehat{N}_{\rm{p},j} \hat{q}(t) u_{j}(t) + \sum\limits_{j = 1}^{m}
      \widehat{N}_{\rm{v},j} \dot{\hat{q}} (t)u_{j}(t) +
      \widehat{B}_{\rm{u}} u(t),\\
    \hat{y}(t) & = \widehat{C}_{\rm{p}}\hat{q}(t) + 
      \widehat{C}_{\rm{v}} \dot{\hat{q}}(t),
  \end{aligned}
\end{align*}
where $\widehat{M}, \widehat{D}, \widehat{K}, \widehat{N}_{\rm{p},j},
\widehat{N}_{\rm{v},j} \in \R^{r \times r}$ for $j = 1, \ldots, m$,
$\widehat{B}_{\rm{u}} \in \R^{r \times m}$, and
$\widehat{C}_{\rm{p}}, \widehat{C}_{\rm{v}} \in \R^{p \times r}$
are given by
\begin{align*}
  \begin{aligned}
    \widehat{M} & = W^{\herm} M V, &
      \widehat{D} & = W^{\herm} D V, &
      \widehat{K} & = W^{\herm} K V, &
      \widehat{B}_{\rm{u}} & = W^{\herm} B_{\rm{u}}, \\
    \widehat{N}_{\rm{p},j} & = W^{\herm} {N}_{\rm{p},j} V, &
      \widehat{N}_{\rm{v},j} & = W^{\herm} {N}_{\rm{v},j} V, &
      \widehat{C}_{\rm{p}} & = {C}_{\rm{p}} V, ~\text{and} &
      \widehat{C}_{\rm{v}} & = {C}_{\rm{v}} V.
  \end{aligned}
\end{align*}
We will construct the model reduction bases $W$ and $V$ such that the
reduced-order subsystem transfer functions $\hG_{k}$ in~\cref{eqn:tfrom} are
\emph{multivariate (tangential) interpolants} to the full-order ones $G_{k}$
at some selected frequencies
$\{\sigma_{1}, \sigma_{2}, \ldots, \sigma_{k}\} \subseteq \C$.
Below, we will make it precise what we mean by tangential interpolation in this
setting.
But first, it is worth noting the dimension of $G_{k}$.
For a MIMO bilinear system with $m$ inputs and $p$ outputs, $G_{k}$, evaluated
at given frequencies, is a $p \times m^{k}$ matrix, i.e., it has a polynomial
growth in the input dimension.
Then, full matrix interpolation of $G_{k}$ by $\hG_{k}$ imposes a rather large
number of interpolation conditions to satisfy, leading to rapid growth of the
reduced order.
We will resolve this issue via tangential interpolation.
It will help to recall the tangential interpolation problem for the linear case
first.

%%%%%%%%%%%%%%%%%%%%%%%%%%%%%%%%%%%%%%%%%%%%%%%%%%%%%%%%%%%%%%%%%%%%%%%%%%%%%%%%

\subsection{Tangential interpolation for linear dynamical systems}

The tangential interpolation replaces the full matrix interpolation of a
matrix-valued function with interpolation along selected directions and can be
interpreted as adding constraints to the matrix interpolation
problem~\cite{BalGR90}.
For given interpolation points $\sigma_{1}, \ldots, \sigma_{k} \in \C$,
given function values $h_{1}, \ldots, h_{k} \in \C^{p}$ and right tangential
directions $b_{1}, \ldots, b_{k} \in \C^{m}$, the task of the right
tangential interpolation is to find an interpolating function
$H\colon \C \rightarrow \C^{p \times m}$ such that
\begin{align} \label{eqn:tang}
  \begin{aligned}
    H(\sigma_{j}) b_{j} & = h_{j} & \text{for } j = 1, \ldots, k.
  \end{aligned}
\end{align}
The left interpolation problem is defined similarly.

It was then proposed in~\cite{BalGR90} and utilized in~\cite{GalVV04} to
employ tangential interpolation for model reduction of linear 
unstructured multi-input/multi-output systems by restricting the
interpolant~\cref{eqn:tang} to a rational matrix-valued function and using the 
system's transfer function evaluations along certain directions as function 
values to interpolate.
In other words, given the original linear system's transfer function
$G(s) = C (sE - A)^{-1} B$, the goal is to construct a reduced-order system with
transfer function $\hG(s) = \widehat{C} (s \widehat{E} - \widehat{A})^{-1}
\widehat{B}$ such that for given interpolation points
$\sigma_{1}, \ldots, \sigma_{k} \in \C$ and
directions $b^{(1)}, \ldots, b^{(k)} \in \C^{m}$ as well as
$c^{(1)}, \ldots, c^{(k)} \in \C^{p}$, the right or left tangential
interpolation conditions
\begin{align} \label{eqn:tanglin}
  \begin{aligned}
    G(\sigma_{j}) b^{(j)} & = \hG(\sigma_{j}) b^{(j)}, && \text{or} &
      \big( c^{(j)} \big)^{\herm} G(\sigma_{j}) & =
      \big( c^{(j)} \big)^{\herm} \hG(\sigma_{j}),
  \end{aligned}
\end{align}
for $j = 1, \ldots, k$, hold.
It has been shown via numerous examples that tangential interpolation yields
accurate reduced-order models while allowing to choose the size of the 
reduced-order model independent of the input and output dimensions (unlike in
the matrix interpolation framework) and thus results in smaller reduced-order
models.
Indeed, tangential interpolation, not the matrix interpolation, forms the
necessary conditions for optimal model reduction of linear systems in the
$\mathcal{H}_{2}$ norm~\cite{AntBG20}. 
The tangential interpolation problem~\cref{eqn:tanglin} (and the
projection-based solution framework) was later extended to structure-preserving
Hermite interpolation in~\cite{BeaG09} of structured transfer functions of
the form $G(s) = \cC(s) \cK(s)^{-1} \cB(s)$.

%%%%%%%%%%%%%%%%%%%%%%%%%%%%%%%%%%%%%%%%%%%%%%%%%%%%%%%%%%%%%%%%%%%%%%%%%%%%%%%%

\subsection{Blockwise tangential interpolation for unstructured bilinear systems}%
\label{subsec:tangblockstd}

Extending tangential interpolation to unstructured bilinear systems of the
form~\cref{eqn:bsys} was first considered in~\cite{BenBD11, RodGB18},
using the observation that multiplying out the Kronecker products
in~\cref{eqn:btf} yields
\begin{align*}
  G_{k}(s_{1}, \ldots, s_{k}) & = \big[ C(s_{k} E - A)^{-1} N_{1} \cdots
    N_{1} (s_{1} E - A)^{-1} B,\\
  & \phantom{= \big[~}
    C(s_{k} E - A)^{-1} N_{1} \cdots N_{2} (s_{1} E - A)^{-1} B,\\
  & \phantom{= \big[~} \ldots,\\
  & \phantom{= \big[~}
    C(s_{k} E - A)^{-1} N_{m} \cdots N_{m} (s_{1} E - A)^{-1} B \big].
\end{align*}
Each block entry in this formula is then considered as separate transfer
function, which will be interpolated along the same chosen directions.
For example, with a right tangential direction $b \in \C^{m}$, the blockwise
evaluation of the transfer function along $b$ is given by
\begin{align*}
  G_{k}(s_{1}, \ldots, s_{k}) (I_{m} \otimes b) & = \big[ C(s_{k} E - A)^{-1}
    N_{1} \cdots N_{1} (s_{1} E - A)^{-1} Bb,\\
  & \phantom{= \big[~}
    C(s_{k} E - A)^{-1} N_{1} \cdots N_{2} (s_{1} E - A)^{-1} Bb,\\
  & \phantom{= \big[~} \ldots,\\
  & \phantom{= \big[~}
    C(s_{k} E - A)^{-1} N_{m} \cdots N_{m} (s_{1} E - A)^{-1} Bb \big],
\end{align*}
leading to the concept of the \emph{blockwise tangential interpolation problem}:
Given interpolation points $\sigma_{1}, \ldots, \sigma_{k} \in \C$ and
tangential directions $b \in \C^{m}$ and $c \in \C^{p}$, find
a reduced-order model such that
\begin{align} \label{eqn:tangblock1}
  G_{k}(\sigma_{1}, \ldots, \sigma_{k}) (I_{m} \otimes b) &
    = \hG_{k}(\sigma_{1}, \ldots, \sigma_{k}) (I_{m} \otimes b) \quad\text{or}\\
  \label{eqn:tangblock2}
  c^{\herm} G_{k}(\sigma_{1}, \ldots, \sigma_{k}) &
    = c^{\herm} \hG_{k}(\sigma_{1}, \ldots, \sigma_{k})
\end{align}
hold.
Also, the bi-tangential interpolation condition,
\begin{align} \label{eqn:tangblock3}
  c^{\herm} G_{k}(\sigma_{1}, \ldots, \sigma_{k}) (I_{m} \otimes b) &
    = c^{\herm} \hG_{k}(\sigma_{1}, \ldots, \sigma_{k}) (I_{m} \otimes b),
\end{align}
will be of high interest in the bilinear system case.
While, in principle, \cref{eqn:tangblock1,eqn:tangblock2}
imply~\cref{eqn:tangblock3}, we will see later that it is possible to match
subsystem transfer functions of higher level $k$ in the sense
of~\cref{eqn:tangblock3} by enforcing~\cref{eqn:tangblock1,eqn:tangblock2} on
lower level transfer functions.

The blockwise tangential interpolation problem can be viewed as
a mixture of tangential interpolation, as it is done for the linear system
case~\cref{eqn:tanglin}, combined with the blocks of multivariate transfer
functions.
While in the case of linear systems, the tangential interpolation restricts the
problem to vectors or scalars of fixed sizes to be interpolated, this is not 
true anymore for the blockwise approach in the bilinear system case.
As already observed in~\cite{BenBD11, RodGB18}, the blockwise approach
still leads to the interpolation of an exponentially increasing number of
vectors or matrices, making it only marginally better than the matrix
interpolation method for model reduction.

%%%%%%%%%%%%%%%%%%%%%%%%%%%%%%%%%%%%%%%%%%%%%%%%%%%%%%%%%%%%%%%%%%%%%%%%%%%%%%%%

\subsection{Notation}

To simplify notation in this work, we will use:
\begin{align} \label{eqn:partderiv}
  \partial_{s_{1}^{j_{1}} \cdots s_{k}^{j_{k}}} f(z_{1}, \ldots, z_{k}) & :=
    \frac{\partial^{j_{1} + \ldots + j_{k}} f}{\partial s_{1}^{j_{1}} \cdots
    \partial s_{k}^{j_{k}}} (z_{1}, \ldots, z_{k}),
\end{align}
to denote the differentiation of an analytic function $f\colon \C^{k}
\rightarrow \C^{\ell}$ with respect to the complex variables $s_{1}, \ldots,
s_{k}$ and evaluated at $z_{1}, \ldots, z_{k} \in \C$.
We denote for matrix-valued functions $\cK\colon \C \to \C^{n \times n}$, which
map complex scalars onto square matrices, the inverse of their evaluation
by $\cK^{-1} := \cK(.)^{-1}$.
This notation of the inverse of evaluated matrix-valued functions will occur
together with the notation of partial derivatives~\cref{eqn:partderiv}.
For example, given two matrix-valued functions
$\cB\colon \C \to \C^{n \times m}$ and $\cK\colon \C \to \C^{n \times n}$, we
denote the partial derivative of the product of $\cB$ with the inverse of $\cK$
evaluated in the points $z_{1}, z_{2} \in \C$ by
\begin{align*}
  \partial_{s_{1}^{j_{1}}} \partial_{s_{2}^{j_{2}}} (\cK^{-1} \cB)
    (z_{1}, z_{2}) & := \frac{\partial^{j_{1} + j_{2}} \cK(.)^{-1}
    \cB(.)}{\partial s_{1}^{j_{1}} \partial s_{2}^{j_{2}}} (z_{1}, z_{2})
\end{align*}
Also, we will use the notion of the Jacobi matrix given by
\begin{align} \label{eqn:jacobi}
  \nabla f & = \begin{bmatrix}
    \partial_{s_{1}} f & \ldots & \partial_{s_{k}} f \end{bmatrix},
\end{align}
denoting the concatenation of all partial derivatives of an analytic 
function $f\colon \C^{k} \rightarrow \C^{\ell}$ with respect to the complex 
variables $s_{1}, \ldots, s_{k}$.

For bilinear systems, we have already introduced the notation
\begin{align*}
  \cN(s) & = \begin{bmatrix} \cN_{1}(s) & \ldots & \cN_{m}(s) \end{bmatrix}
\end{align*}
to denote horizontally concatenated matrix functions corresponding to the
bilinear terms.
Additionally, we use
\begin{align*}
  \tcN(s) & = \begin{bmatrix} \cN_{1}(s) \\ \vdots \\ \cN_{m}(s) \end{bmatrix}
\end{align*}
for denoting the vertical concatenation of the matrix functions corresponding 
to the bilinear terms.
We denote the vector of ones of length $m$ by $\mathds{1}_{m}$.

%%%%%%%%%%%%%%%%%%%%%%%%%%%%%%%%%%%%%%%%%%%%%%%%%%%%%%%%%%%%%%%%%%%%%%%%%%%%%%%%
% UNIFYING FRAMEWORK.                                                          %
%%%%%%%%%%%%%%%%%%%%%%%%%%%%%%%%%%%%%%%%%%%%%%%%%%%%%%%%%%%%%%%%%%%%%%%%%%%%%%%%

\section{Generalized structured tangential interpolation framework}%
\label{sec:tang}

In this section, we will start with two different interpretations of 
tangential interpolation~\cref{eqn:tanglin} and their corresponding 
interpolation problems for bilinear systems.
Motivated by these formulations, we introduce a unifying framework for
tangential interpolation of structured bilinear systems and give subspace
conditions for structure-preserving model reduction of the corresponding
bilinear systems.
As a special case of the unifying framework, we derive the 
theory for structure-preserving blockwise tangential interpolation as reviewed
in \Cref{subsec:tangblockstd} and previously employed in the literature for
standard (unstructured) bilinear systems.

%%%%%%%%%%%%%%%%%%%%%%%%%%%%%%%%%%%%%%%%%%%%%%%%%%%%%%%%%%%%%%%%%%%%%%%%%%%%%%%%

\subsection{Tangential interpolation in the frequency domain}
\label{subsec:tangfreq}

Examining the original formulation of tangential interpolation~\cref{eqn:tang}
and the multivariate transfer functions~\cref{eqn:tf}, a first natural approach
to tangential interpolation for bilinear systems would be
to choose an appropriately sized vector $\hb \in \C^{m^{k}}$, where
\begin{align*}
  \hb & = \begin{bmatrix} \left( \hb^{(1 1 \ldots 1)} \right)^{\herm} &
    \left(\hb^{(2 1 \ldots 1)} \right)^{\herm}\ldots & 
    \left(\hb^{\left( m m \ldots m \right)} \right)^{\herm}
    \end{bmatrix}^{\herm}
\end{align*}
and $b^{(j_{1} \ldots j_{k})} \in \C^{m}$ for all
$1 \leq j_{1}, \ldots, j_{k} \leq m$,
as right tangential direction and to consider interpolating
\begin{align} \label{eqn:tangsum}
  \begin{aligned}
    G_{k}(s_{1}, \ldots, s_{k}) \hb  = \sum\limits_{j_{1} = 1}^{m} \ldots 
      \sum\limits_{j_{k-1} = 1}^{m} &\cC(s_{k}) \cK(s_{k})^{-1} 
      \cN_{j_{k-1}}(s_{k-1}) \cK(s_{k-1})^{-1} \\
    & \quad{}\times{} \ldots \times \cN_{j_{1}}(s_{1}) \cK(s_{1})^{-1} 
      \cB(s_{1}) \hb^{(j_{1} \ldots j_{k})}.
  \end{aligned}
\end{align}
This general approach comes along with a computational drawback.
For every new transfer function level $k$, a different part of $\hb$ is
multiplied with the input functional $\cB(s)$ in each term of the
sum~\cref{eqn:tangsum}. Then,
the corresponding  basis for  model reduction would grow 
according to the different block entries of $\hb$ and, thus, even faster 
than for the blockwise tangential interpolation problem
(\Cref{subsec:tangblockstd}).
A remedy to this problem is to restrict the full direction 
vector to the repetition of a single small direction $b \in \C^{m}$, i.e.,
\begin{align} \label{eqn:resdirect}
  \hb = \mathds{1}_{m^{k-1}} \otimes b =
    \begin{bmatrix} b \\ \vdots \\ b \end{bmatrix}.
\end{align}
With this particular choice of $b$ in~\cref{eqn:resdirect}, the right tangential
interpolation problem can be written as
\begin{align} \label{eqn:tangfreq1}
  G_{k}(\sigma_{1}, \ldots, \sigma_{k}) (\mathds{1}_{m^{k-1}} \otimes b) &
    = \hG_{k}(\sigma_{1}, \ldots, \sigma_{k}) (\mathds{1}_{m^{k-1}} \otimes b),
\end{align}
for given interpolation points $\sigma_{1}, \ldots, \sigma_{k} \in \C$.
This restricts the interpolation problem to a vector of constant length with
respect to the transfer function level and thus allows for an efficient
construction of the projection basis.

For the left tangential interpolation problem, a direct extension of the 
classical approach~\cref{eqn:tang} would lead to the same results as in the 
blockwise tangential interpolation case~\cref{eqn:tangblock2} since the first
dimension of the transfer function is constant for all transfer function levels.
To consider a dual formulation of~\cref{eqn:tangsum} for the left tangential
interpolation problem (one for which the basis dimension
does not grow exponentially), we choose 
\begin{align} \label{eqn:tangfreq2}
  c^{\herm} G_{k}(\sigma_{1}, \ldots, \sigma_{k})
    (\mathds{1}_{m^{k-1}} \otimes I_{m})
    & = c^{\herm} \hG_{k}(\sigma_{1}, \ldots, \sigma_{k})
    (\mathds{1}_{m^{k-1}} \otimes I_{m}),
\end{align}
for a given direction $c \in \C^{p}$ and interpolation points
$\sigma_{1}, \ldots, \sigma_{k} \in \C$.  Consequently, we consider 
\begin{align} \label{eqn:tangfreq3}
  c^{\herm} G_{k}(\sigma_{1}, \ldots, \sigma_{k})
    (\mathds{1}_{m^{k-1}} \otimes b) &
    = c^{\herm} \hG_{k}(\sigma_{1}, \ldots, \sigma_{k})
    (\mathds{1}_{m^{k-1}} \otimes b)
\end{align}
as the bi-tangential interpolation problem.

%%%%%%%%%%%%%%%%%%%%%%%%%%%%%%%%%%%%%%%%%%%%%%%%%%%%%%%%%%%%%%%%%%%%%%%%%%%%%%%%

\subsection{Time domain interpretation of tangential interpolation}
\label{subsec:tangtime}

A different way to look at tangential interpolation of transfer functions
is its interpretation in the time domain.
We start with the tangential interpolation problem for linear dynamical 
systems~\cref{eqn:tanglin}.
For simplicity, we consider only the case of linear unstructured first-order
systems as given in the time domain by
\begin{align} \label{eqn:sys}
  \begin{aligned}
    E \dot{x}(t) & = A x(t) + B u(t),\\
    y(t) & = C x(t),
  \end{aligned}
\end{align}
with $E, A \in \R^{n \times n}$, $B \in \R^{n \times m}$ and $C \in
\R^{p \times n}$, and in the frequency domain by the transfer function
\begin{align*}
  G(s) & = C (sE - A)^{-1} B.
\end{align*}
We note that the following derivations work for all structured
linear systems as well~\cite{BeaG09}.
The multiplication with tangential directions in the frequency domain can be
considered independent of the chosen interpolation points, which gives new 
systems in the frequency domain described by the transfer functions
\begin{align} \label{eqn:freqtangsys}
  \begin{aligned}
    \tG_{\rm{b}}(s) & = G(s) b  && \text{and} &
      \tG_{\rm{c}}(s) & = c^{\herm} G(s),
  \end{aligned}
\end{align}
with the tangential directions $b \in \C^{m}$ and $c \in \C^{p}$.
Those new systems~\cref{eqn:freqtangsys} allow now for re-interpretation in the
time domain.
In fact, the resulting tangential systems can be seen as embedding the 
original linear system $G$ into single-input or single-output systems.
We set the outer inputs and outputs as $u(t) = b \tu(t)$ and
$\ty(t) = c^{\herm} y(t)$, respectively, and obtain the new systems:
\begin{align} \label{eqn:timetangsys1}
  \tG_{\rm{b}}: \left\{
  \begin{aligned}
    E \dot{x}(t) & = A x(t) + B b \tu(t), \\
    y(t) & = C x(t),
  \end{aligned}
  \right.
\end{align}
for embedding the inputs, and
\begin{align} \label{eqn:timetangsys2}
  \tG_{\rm{c}}: \left\{
  \begin{aligned}
    E \dot{x}(t) & = A x(t) + B u(t), \\
    \ty(t) & = c^{\herm} C x(t),
  \end{aligned}
  \right.
\end{align}
for the outputs.
Thereby, in the setting of tangential interpolation, we are restricting
the system inputs to a single input signal that is spread along a given 
direction $b$ to be fed into the original system~\cref{eqn:sys} or we 
restrict the output to a linear combination of the observations of the original 
system~\cref{eqn:sys} using the direction $c$.

Now, we consider the bilinear unstructured systems~\cref{eqn:bsys} and 
make use of the time domain interpretation of tangential interpolation we 
have done for the linear
case~\cref{eqn:timetangsys1,eqn:timetangsys2}.
Using the same tangential directions as before and the embedding strategy
for the bilinear system~\cref{eqn:bsys}, with
$b = \begin{bmatrix} b_{1} & b_{2} & \ldots & b_{m}\end{bmatrix}^{\trans}$
we obtain 
\begin{align} \label{eqn:timetangbsys1}
  \tG_{\rm{b}}: \left\{
  \begin{aligned}
    E \dot{x}(t) & = A x(t) + \sum\limits_{j = 1}^{m}N_{j} x(t) b_{j} \tu(t)
      + B b \tu(t), \\
    y(t) & = C x(t),
  \end{aligned}
  \right.
\end{align}
for the embedded inputs,
\begin{align} \label{eqn:timetangbsys2}
  \tG_{\rm{c}}: \left\{
  \begin{aligned}
    E \dot{x}(t) & = A x(t) + \sum\limits_{j = 1}^{m}N_{j} x(t) u_{j}(t)
      + B u(t), \\
    \ty(t) & = c^{\herm} C x(t),
  \end{aligned}
  \right.
\end{align}
for embedding the outputs.
Additionally, we consider here the fully embedded system
\begin{align} \label{eqn:timetangbsys3}
  \tG_{\rm{cb}}: \left\{
  \begin{aligned}
    E \dot{x}(t) & = A x(t) + \sum\limits_{j = 1}^{m}N_{j} x(t) b_{j} \tu(t)
      + B b \tu(t), \\
    \ty(t) & = c^{\herm} C x(t),
  \end{aligned}
  \right.
\end{align}
as it relates to the bi-tangential interpolation problem.
These new bilinear 
systems~\cref{eqn:timetangbsys1,eqn:timetangbsys2,eqn:timetangbsys3}
will be used to derive a new concept of tangential interpolation for bilinear 
systems.
The corresponding regular transfer functions for the embedded systems are given
as follows:
\begin{align} \label{eqn:freqtangbsys1}
  \tG_{\rm{b}, k}(s_{1}, \ldots, s_{k}) & = C(s_{k} I_{n} - A)^{-1} \left( 
    \prod\limits_{j = 1}^{k-1} \left( \sum\limits_{i = 1}^{m} b_{i} N_{i}
    \right) (s_{k-j}I_{n} - A)^{-1} \right) B b, \\ \nonumber
  \tG_{\rm{c}, k}(s_{1},\ldots,s_{k}) & = c^{\herm} C(s_{k}E - A)^{-1} \left(
      \prod\limits_{j = 1}^{k-1} (I_{m^{j-1}} \otimes N)
      (I_{m^{j}} \otimes (s_{k-j}E - A)^{-1})
      \right) \\ \label{eqn:freqtangbsys2}
  & \quad{}\times{}
    (I_{m^{k-1}} \otimes B), \\ \label{eqn:freqtangbsys3}
  \tG_{\rm{cb},k}(s_{1}, \ldots, s_{k}) & = c^{\herm} C(s_{k} I_{n} - A)^{-1}
    \left( \prod\limits_{j = 1}^{k-1} \left( \sum\limits_{i = 1}^{m} b_{i} N_{i}
    \right) (s_{k-j}I_{n} - A)^{-1} \right) B b,
\end{align}
for $k \geq 1$.
These new transfer 
functions~\cref{eqn:freqtangbsys1,eqn:freqtangbsys2,eqn:freqtangbsys3}
can now be combined with our structured transfer function setting~\cref{eqn:tf}.
For a given direction vector $b \in \C^{m}$, we denote the scaled summation of
the structured multivariate transfer functions by
\begin{align} \label{eqn:scaledsummed}
  \begin{aligned}
    \tG_{k}(s_{1}, \ldots, s_{k}) & = \cC(s_{k}) \cK(s_{k})^{-1}
      \left( \sum\limits_{j = 1}^{m} b_{j} \cN_{j}(s_{k-1}) \right)
      \cK(s_{k-1})^{-1} \times \ldots\\
    & \quad{}\times{}
      \left( \sum\limits_{j = 1}^{m} b_{j} \cN_{j}(s_{1}) \right)
      \cK(s_{1})^{-1} \cB(s_{1}).
  \end{aligned}
\end{align}
The bilinear terms in~\cref{eqn:scaledsummed} collapsed from large concatenated
matrices in~\cref{eqn:tf} to simple $n$-di\-men\-sion\-al matrices.
Therefore, the Kronecker products become classical matrix multiplications such
that $\tG_{k}\colon \C^{k} \rightarrow \C^{p \times m}$.

Denoting the scaled and summed transfer function of the reduced-order model
by $\htG_{k}(s_{1}, \ldots, s_{k})$, the corresponding right tangential 
interpolation problem is given by
\begin{align} \label{eqn:tangtime1}
  \tG_{k}(\sigma_{1}, \ldots, \sigma_{k}) b &
    = \htG_{k}(\sigma_{1}, \ldots, \sigma_{k}) b,
\end{align}
for given interpolation points $\sigma_{1}, \ldots, \sigma_{k} \in \C$.
As before, motivated by duality, the left and bi-tangential interpolation 
problems are chosen to be
\begin{align} \label{eqn:tangtime2}
    c^{\herm} \tG_{k}(\sigma_{1}, \ldots, \sigma_{k}) &
      = c^{\herm} \htG_{k}(\sigma_{1}, \ldots, \sigma_{k})
      \quad \text{and}\\ \label{eqn:tangtime3}
    c^{\herm} \tG_{k}(\sigma_{1}, \ldots, \sigma_{k}) b &
      = c^{\herm} \htG_{k}(\sigma_{1}, \ldots, \sigma_{k}) b,
\end{align}
respectively.

\begin{remark}[Relation to other control systems]
  The idea of time domain interpretation of tangential interpolation can easily 
  be extended to other types of control systems, e.g., to systems 
  with polynomial nonlinearities.
  This might lead to new efficient tangential interpolation approaches for 
  nonlinear multi-input/multi-output control systems.
\end{remark}

%%%%%%%%%%%%%%%%%%%%%%%%%%%%%%%%%%%%%%%%%%%%%%%%%%%%%%%%%%%%%%%%%%%%%%%%%%%%%%%%

\subsection{Structured tangential interpolation framework}%
\label{subsec:tangmod}

Now, employing the scaled and summed transfer functions we introduced
in~\cref{eqn:scaledsummed}, we develop a generalized framework for tangential
interpolation of multivariate transfer functions that unifies the different
approaches to bilinear tangential interpolation discussed in
\Cref{subsec:tangblockstd,subsec:tangfreq,subsec:tangtime}.
The new framework will encompass all these different approaches under one
umbrella and thus give one formulation to cover all these different
interpretations of bilinear tangential interpolation, filling an important gap
in the interpolatory model reduction theory  of bilinear systems. Moreover, we
will develop this new framework  for the structured bilinear dynamical systems
for which tangential interpolation has not been studied yet.

We start by defining the modified multivariate transfer functions
\begin{align} \label{eqn:tfmod}
  \begin{aligned}
    & \mG{k}{s_{1}, \ldots, s_{k}}{d^{(1)}, \ldots, d^{(k-1)}}\\
    & := \cC(s_{k}) \cK(s_{k})^{-1} \left( \prod\limits_{j = 1}^{k-1}
      \mN{s_{k-j}}{d^{(k-j)}} \cK(s_{k-j})^{-1} \right) \cB(s_{1}),
  \end{aligned}
\end{align}
for $k \geq 1$, with frequency points $s_{1}, \ldots, s_{k} \in \C$
and scaling vectors $d^{(1)}, \ldots, d^{(k-1)} \in \C^{m}$,  where
\begin{align*}
  \mN{s_{j}}{d^{(j)}} & = \cN(s) (d^{(j)} \otimes I_{n})
    = \sum\limits_{i = 1}^{m} d^{(j)}_{i} \cN_{i}(s_{j})
\end{align*}
denotes the scaled sum of the bilinear terms.
Note that the first modified transfer function does not depend on a scaling 
vector and it holds that
\begin{align*}
  G_{1}(s_{1}) & = \mGf{s_{1}}.
\end{align*}
In this setting, $\hmG{k}{s_{1}, \ldots, s_{k}}{d^{(1)}, \ldots, d^{(k-1)}}$ 
denotes the modified transfer functions of the re\-duc\-ed-order model.
For the modified transfer functions, we define the following tangential
interpolation problem:

\begin{problem}[Tangential modified transfer function interpolation]%
  \label{prb:tangmod}
  For given interpolation points $\sigma_{1}, \ldots, \sigma_{k} \in \C$,
  scaling vectors $d^{(1)}, \ldots, d^{(k-1)} \in \C^{m}$, and tangential
  directions $b \in C^{m}$ and $c \in \C^{p}$, find a reduced-order model such
  that
  \begin{align} \label{eqn:tangmod1}
    \mG{k}{\sigma_{1}, \ldots, \sigma_{k}}{d^{(1)}, \ldots, d^{(k-1)}} b
      & = \hmG{k}{\sigma_{1}, \ldots, \sigma_{k}}
      {d^{(1)}, \ldots, d^{(k-1)}} b, \\ \label{eqn:tangmod2}
    c^{\herm} \mG{k}{\sigma_{1}, \ldots, \sigma_{k}}
      {d^{(1)}, \ldots, d^{(k-1)}}
      & = c^{\herm} \hmG{k}{\sigma_{1}, \ldots, \sigma_{k}}
      {d^{(1)}, \ldots, d^{(k-1)}}, \quad\text{or} \\ \label{eqn:tangmod3}
    c^{\herm} \mG{k}{\sigma_{1}, \ldots, \sigma_{k}}
      {d^{(1)}, \ldots, d^{(k-1)}} b
      & = c^{\herm} \hmG{k}{\sigma_{1}, \ldots, \sigma_{k}}
      {d^{(1)}, \ldots, d^{(k-1)}} b
  \end{align}
  hold.
\end{problem}

Before we present our results that show how to construct the reduced bilinear
systems to solve the structure-preserving tangential interpolation problem in
the new generalized framework, we formally state in the following corollary that
the earlier bilinear tangential interpolation frameworks  are special cases of
the proposed unifying framework.
Due to its significance in the literature and its more complex formulation in the
unifying framework, the case of blockwise tangential interpolation is treated
separately in \Cref{subsec:tangblock}.

\begin{corollary}[Choices of the scaling vectors]
  Consider the proposed tangential interpolation problem (\Cref{prb:tangmod})
  with the corresponding scaling vectors $d^{(j)}$ in~\cref{eqn:tfmod}.
  Then:
  \begin{enumerate}[label = (\alph*)]
    \item Choosing $d^{(1)} = \ldots = d^{(k-1)} = \mathds{1}_{m}$ yields the 
      extension of classical tangential interpolation to the multivariate 
      transfer functions of bilinear 
      systems~\cref{eqn:tangfreq1,eqn:tangfreq2,eqn:tangfreq3} from 
      \Cref{subsec:tangfreq}.
    \item Choosing $d^{(1)} = \ldots = d^{(k-1)} = b$, with $b \in \C^{m}$ as
      the right tangential direction, yields the re-interpretation of tangential
      interpolation in time
      domain~\cref{eqn:tangtime1,eqn:tangtime2,eqn:tangtime3} from
      \Cref{subsec:tangtime}.
  \end{enumerate}
\end{corollary}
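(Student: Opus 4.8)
The plan is to reduce both assertions to an identity at the level of the transfer functions: for each prescribed choice of the scaling vectors $d^{(1)}, \ldots, d^{(k-1)}$ in \cref{eqn:tfmod}, I want to recognize the modified transfer function $\mG{k}{s_{1}, \ldots, s_{k}}{d^{(1)}, \ldots, d^{(k-1)}}$, suitably contracted with the tangential directions $b$ and $c$, as exactly the transfer-function expression occurring in \Cref{subsec:tangfreq} or in \Cref{subsec:tangtime}. Once this is carried out for $G_{k}$, the identical manipulation applied to the reduced-order transfer functions \cref{eqn:tfrom} gives the matching statement for $\hG_{k}$, and then the three interpolation conditions \cref{eqn:tangmod1,eqn:tangmod2,eqn:tangmod3} of \Cref{prb:tangmod} turn verbatim into \cref{eqn:tangfreq1,eqn:tangfreq2,eqn:tangfreq3} in case~(a) and into \cref{eqn:tangtime1,eqn:tangtime2,eqn:tangtime3} in case~(b).

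Part~(b) requires essentially no computation. Substituting $d^{(1)} = \ldots = d^{(k-1)} = b$ into the scaled bilinear term of \cref{eqn:tfmod} gives $\mN{s_{k-j}}{b} = \sum_{i=1}^{m} b_{i} \cN_{i}(s_{k-j})$, which is exactly the scaled sum of the $\cN_{j}$ that appears in the definition of $\tG_{k}$ in \cref{eqn:scaledsummed}. Hence $\mG{k}{s_{1}, \ldots, s_{k}}{b, \ldots, b} = \tG_{k}(s_{1}, \ldots, s_{k})$, and the analogous identity with hats holds for the reduced-order quantities; inserting these into \Cref{prb:tangmod} reproduces \cref{eqn:tangtime1,eqn:tangtime2,eqn:tangtime3}.

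Part~(a) is the substantive case, and I would establish it through the Kronecker-collapse identities
\begin{align*}
  G_{k}(s_{1}, \ldots, s_{k}) (\mathds{1}_{m^{k-1}} \otimes b)
    & = \mG{k}{s_{1}, \ldots, s_{k}}{\mathds{1}_{m}, \ldots, \mathds{1}_{m}} b, \\
  G_{k}(s_{1}, \ldots, s_{k}) (\mathds{1}_{m^{k-1}} \otimes I_{m})
    & = \mG{k}{s_{1}, \ldots, s_{k}}{\mathds{1}_{m}, \ldots, \mathds{1}_{m}},
\end{align*}
together with the two analogous identities obtained by putting hats on every symbol. The proof is an induction on the transfer-function level $k$ that peels the Kronecker factors of \cref{eqn:tf} from the right, using only the factorization $\mathds{1}_{m^{k-1}} = \mathds{1}_{m} \otimes \mathds{1}_{m^{k-2}}$, the mixed-product rule $(A \otimes B)(C \otimes D) = (AC) \otimes (BD)$, and the elementary identity $\cN(s) (\mathds{1}_{m} \otimes v) = \sum_{i=1}^{m} \cN_{i}(s) v = \mN{s}{\mathds{1}_{m}} v$ for $v \in \C^{n}$ (for the left-tangential identity, $v$ is replaced by the $n \times m$ block $\cB(s_{1})$). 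Concretely, starting from the right end of \cref{eqn:tf}, the factor $I_{m^{k-1}} \otimes \cB(s_{1})$ sends $\mathds{1}_{m^{k-1}} \otimes b$ to $\mathds{1}_{m^{k-1}} \otimes (\cB(s_{1}) b)$, the factor $I_{m^{k-1}} \otimes \cK(s_{1})^{-1}$ leaves the leading $\mathds{1}_{m^{k-1}}$ untouched, and $I_{m^{k-2}} \otimes \cN(s_{1})$ then absorbs one copy of $\mathds{1}_{m}$, producing $\mathds{1}_{m^{k-2}} \otimes \big( \mN{s_{1}}{\mathds{1}_{m}} \cK(s_{1})^{-1} \cB(s_{1}) b \big)$; iterating one level at a time collapses all remaining Kronecker products and leaves exactly \cref{eqn:tfmod} with all scaling vectors equal to $\mathds{1}_{m}$, multiplied on the right by $b$. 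Since the reduced model \cref{eqn:tfrom} has the identical Kronecker structure, the same induction applies with hatted matrices, and inserting the four identities into \cref{eqn:tangmod1,eqn:tangmod2,eqn:tangmod3} gives \cref{eqn:tangfreq1,eqn:tangfreq2,eqn:tangfreq3}.

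I expect the only delicate point to be the index bookkeeping inside the induction of part~(a): the product in \cref{eqn:tf} runs over $j = 1, \ldots, k-1$ with the $j$-th block carrying $I_{m^{j-1}} \otimes \cN(s_{k-j})$ and $I_{m^{j}} \otimes \cK(s_{k-j})^{-1}$, so one has to track simultaneously the shrinking Kronecker exponent and the frequency index $s_{k-j}$, and check that the peeling step at level $j$ produces precisely the factor $\mN{s_{k-j}}{\mathds{1}_{m}} \cK(s_{k-j})^{-1}$ of \cref{eqn:tfmod}. Everything else reduces to routine applications of the mixed-product rule.
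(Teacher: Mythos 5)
Your proposal is correct, and it matches the paper's intent: the corollary is stated without proof precisely because it is meant to follow by direct inspection of the definitions, which is exactly what your computation verifies. Part~(b) is indeed immediate from comparing $\mN{s}{b}$ with the scaled sums in~\cref{eqn:scaledsummed}, and for part~(a) your Kronecker-peeling induction is sound; note that the key identity $G_{k}(s_{1},\ldots,s_{k})(\mathds{1}_{m^{k-1}}\otimes b)=\mG{k}{s_{1},\ldots,s_{k}}{\mathds{1}_{m},\ldots,\mathds{1}_{m}}\,b$ can also be read off in one line from the multiplied-out sum representation~\cref{eqn:tangsum} with all blocks $\hb^{(j_{1}\ldots j_{k})}=b$, since the $k-1$ nested sums then factor termwise into the products $\sum_{i}\cN_{i}(s_{j})=\mN{s_{j}}{\mathds{1}_{m}}$, which spares you the index bookkeeping you flag as the delicate point.
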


The following theorem establishes the subspace conditions on the model reduction
bases $V$ and $W$ to construct the reduced-order model~\cref{eqn:proj} that
satisfies the tangential interpolation
conditions~\cref{eqn:tangmod1,eqn:tangmod2,eqn:tangmod3}.

\begin{theorem}[Modified structured tangential interpolation]%
  \label{thm:tangmod}
  Let $G$ be a bilinear system, associated with its modified transfer 
  functions $\mGs_{k}$ in~\cref{eqn:tfmod}, and $\hG$ the reduced-order bilinear 
  system, constructed as in~\cref{eqn:proj} with its modified transfer functions 
  $\hmGs_{k}$.
  Given sets of interpolation points $\sigma_{1}, \ldots, \sigma_{k} \in \C$ and
  $\varsigma_{1}, \ldots, \varsigma_{\kappa} \in \C$ such that the matrix 
  functions $\cC(s)$, $\cK(s)^{-1}$, $\cN(s)$, $\cB(s)$, $\hcK(s)^{-1}$ are 
  defined for $s \in \{ \sigma_{1}, \ldots, \sigma_{k}, \varsigma_{1}, \ldots,
  \varsigma_{\kappa} \}$, two tangential directions $b \in \C^{m}$ and 
  $c \in \C^{p}$, and two sets of scaling vectors $d^{(1)}, \ldots, 
  d^{(k-1)} \in \C^{m}$ and $\delta^{(1)}, \ldots, \delta^{(\kappa-1)} \in
  \C^{m}$, the following statements hold:
  \begin{enumerate}[label=(\alph*)]
    \item If $V$ is constructed as
      \begin{align*}
        v_{1} & = \cK(\sigma_{1})^{-1}\cB(\sigma_{1})b,\\
        v_{j} & = \cK(\sigma_{j})^{-1} \mN{\sigma_{j-1}}{d^{(j-1)}} v_{j-1},
          & 2 \leq j \leq k,\\
        \mspan(V) & \supseteq \mspan\left([v_{1}, \ldots,
        v_{k}]\right),
      \end{align*}
      then the following interpolation conditions hold true:
      \begin{align*}
        \mGf{\sigma_{1}}b & =  \hmGf{\sigma_{1}}b,\\
        \mG{2}{\sigma_{1}, \sigma_{2}}{d^{(1)}} b & =
          \hmG{2}{\sigma_{1}, \sigma_{2}}{d^{(1)}} b,\\
        & \,\,\, \vdots\\
        \mG{k}{\sigma_{1}, \ldots, \sigma_{k}} 
          {d^{(1)}, \ldots, d^{(k-1)}} b &
          = \hmG{k}{\sigma_{1}, \ldots, \sigma_{k}} 
          {d^{(1)}, \ldots, d^{(k-1)}} b.
      \end{align*}
    \item If $W$ is constructed as
      \begin{align*}
        w_{1} & = \cK(\varsigma_{\kappa})^{-\herm}\cC
          (\varsigma_{\kappa})^{\herm} c,\\
        w_{i} & = \cK(\varsigma_{\kappa-i+1})^{-\herm}
          \mN{\varsigma_{\kappa-i+1}}{\delta^{(\kappa-i+1)}}^{\herm} 
          w_{i-1}, & 2 \leq i \leq \kappa,\\
        \mathrm{span}(W) & \supseteq \mathrm{span}\left([w_{1}, \ldots,
          w_{\kappa}] \right),
      \end{align*}
      then the following interpolation conditions hold true:
      \begin{align*}
        c^{\herm} \mGf{\varsigma_{\kappa}}
          & = c^{\herm} \hmGf{\varsigma_{\kappa}},\\
        c^{\herm} \mG{1}{\varsigma_{\kappa-1}, \varsigma_{\kappa}}
          {\delta^{(\kappa-1)}} & 
          = c^{\herm} \hmG{1}{\varsigma_{\kappa-1}, \varsigma_{\kappa}}
          {\delta^{(\kappa-1)}}, \\
        & \,\,\, \vdots \\
        c^{\herm} \mG{\kappa}{\varsigma_{1}, \ldots, \varsigma_{\kappa}}
          {\delta^{(1)}, \ldots, \delta^{(\kappa-1)}} &
          = c^{\herm} \hmG{\kappa}{\varsigma_{1}, \ldots, \varsigma_{\kappa}}
          {\delta^{(1)}, \ldots, \delta^{(\kappa-1)}}.
      \end{align*}
    \item Let $V$ be constructed as in Part~(a) and $W$ as in Part~(b).
      Then, additionally to the results in~(a) and~(b), the following
      interpolation conditions hold:
      \begin{align*}
        & c^{\herm} \mG{q + \eta}{\sigma_{1}, \ldots, \sigma_{q},
          \varsigma_{\kappa-\eta+1}, \ldots, \varsigma_{\kappa}}
          {d^{(1)}, \ldots, d^{(q-1)}, z, \delta^{(\kappa-\eta+1)}, \ldots, 
          \delta^{(\kappa-1)}} b \\
        & = c^{\herm} \hmG{q + \eta}{\sigma_{1}, \ldots, \sigma_{q},
          \varsigma_{\kappa-\eta+1}, \ldots, \varsigma_{\kappa}}
          {d^{(1)}, \ldots, d^{(q-1)}, z, \delta^{(\kappa-\eta+1)}, \ldots, 
          \delta^{(\kappa-1)}} b,
      \end{align*}
      for $1 \leq q \leq k$,  $1 \leq \eta \leq \kappa$ and an additional 
      arbitrary scaling vector $z \in \C^{m}$.
  \end{enumerate}
\end{theorem}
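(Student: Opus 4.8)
The plan is to reduce all three parts to the projection identity underlying structure-preserving rational interpolation~\cite{BeaG09}, applied slot by slot to the chain-structured products~\cref{eqn:tfmod}. Write $\hcK(s) = W^{\herm}\cK(s)V$ and, at each point where $\hcK(s)$ is invertible — by hypothesis this includes every $\sigma_{j}$ and $\varsigma_{i}$ — introduce the two oblique projectors
\begin{align*}
  \Pi_{V}(s) & := V\hcK(s)^{-1}W^{\herm}\cK(s),
    & \Pi_{W}(s) & := \cK(s)V\hcK(s)^{-1}W^{\herm}.
\end{align*}
A one-line computation shows $\Pi_{V}(s)^{2} = \Pi_{V}(s)$ with range $\mspan(V)$, so $\Pi_{V}(s)v = v$ for every $v \in \mspan(V)$; dually $\Pi_{W}(s)^{2} = \Pi_{W}(s)$ and $w^{\herm}\Pi_{W}(s) = w^{\herm}$ for every $w \in \mspan(W)$. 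The key identity is
\begin{align*}
  V\hcK(s)^{-1}W^{\herm} & = \Pi_{V}(s)\cK(s)^{-1} = \cK(s)^{-1}\Pi_{W}(s),
\end{align*}
obtained by inserting $\cK(s)^{-1}\cK(s) = I_{n}$. Since the projected data~\cref{eqn:proj} give $\hcC(s) = \cC(s)V$, $\hcB(s) = W^{\herm}\cB(s)$ and $\hmN{s}{d} = W^{\herm}\mN{s}{d}V$, substituting into the reduced modified transfer function $\hmGs_{k}$ (that is, \cref{eqn:tfmod} with $\cC,\cK^{-1},\cN,\cB$ replaced by their projected counterparts) writes it as
\begin{align*}
  \hmGs_{k} & = \cC(s_{k})\big[V\hcK(s_{k})^{-1}W^{\herm}\big]\mN{s_{k-1}}{d^{(k-1)}}\big[V\hcK(s_{k-1})^{-1}W^{\herm}\big]\cdots \\
  & \quad{}\times{} \mN{s_{1}}{d^{(1)}}\big[V\hcK(s_{1})^{-1}W^{\herm}\big]\cB(s_{1}),
\end{align*}
and, using the identity above, each of the $k$ bracketed factors may be replaced, at our discretion, either by $\Pi_{V}(s_{j})\cK(s_{j})^{-1}$ or by $\cK(s_{j})^{-1}\Pi_{W}(s_{j})$. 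Thus $\hmGs_{k}$ equals the full $\mGs_{k}$ of~\cref{eqn:tfmod} with one projector inserted into each of its $k$ resolvent slots.

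For part~(a) I insert the $\Pi_{V}$-form at every slot and multiply by $b$ on the right. Read from the right, the construction of $V$ says exactly that the running vectors are $v_{1} = \cK(\sigma_{1})^{-1}\cB(\sigma_{1})b$ and $v_{j} = \cK(\sigma_{j})^{-1}\mN{\sigma_{j-1}}{d^{(j-1)}}v_{j-1}$, each in $\mspan(V)$; hence every $\Pi_{V}(\sigma_{j})$ fixes $v_{j}$, the projectors all disappear, and what is left is precisely $\cC(\sigma_{k})v_{k} = \mG{k}{\sigma_{1},\ldots,\sigma_{k}}{d^{(1)},\ldots,d^{(k-1)}}b$ — the level-$k$ instance of~\cref{eqn:tangmod1}. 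Applying the same argument to the prefixes $v_{1},\ldots,v_{\ell}$, which also lie in $\mspan(V)$, yields the conditions at every level $\ell \le k$, giving the stated list (with $\ell = 1$ recovering $\mGf{\sigma_{1}}b = \hmGf{\sigma_{1}}b$). Part~(b) is the mirror image: insert the $\Pi_{W}$-form at every slot, multiply by $c^{\herm}$ on the left, and read from the left; the construction of $W$ makes the running covectors $w_{1}^{\herm} = c^{\herm}\cC(\varsigma_{\kappa})\cK(\varsigma_{\kappa})^{-1}$ and $w_{i}^{\herm} = w_{i-1}^{\herm}\mN{\varsigma_{\kappa-i+1}}{\delta^{(\kappa-i+1)}}\cK(\varsigma_{\kappa-i+1})^{-1}$ satisfy $w_{i} \in \mspan(W)$, so every $\Pi_{W}(\varsigma_{\kappa-i+1})$ disappears and the instances of~\cref{eqn:tangmod2} at levels $1,\ldots,\kappa$ drop out.

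Part~(c) is where the two one-sided arguments must be glued, and this gluing — not any new idea — is the main obstacle. Index the combined transfer function $\mGs_{q+\eta}$ at the points $\sigma_{1},\ldots,\sigma_{q},\varsigma_{\kappa-\eta+1},\ldots,\varsigma_{\kappa}$ with the scalings $d^{(1)},\ldots,d^{(q-1)},z,\delta^{(\kappa-\eta+1)},\ldots,\delta^{(\kappa-1)}$; by the structure of~\cref{eqn:tfmod} the resolvent slot carrying $\sigma_{q}$ sits immediately to the right of the factor $\mN{\sigma_{q}}{z}$ and the slot carrying $\varsigma_{\kappa-\eta+1}$ immediately to its left, so $\mN{\sigma_{q}}{z}$ is the ``junction'' separating a $\sigma$-part and a $\varsigma$-part of the chain. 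In $\hmGs_{q+\eta}$ I now use the $\Pi_{V}$-form at the $q$ slots carrying $\sigma_{1},\ldots,\sigma_{q}$ and the $\Pi_{W}$-form at the $\eta$ slots carrying $\varsigma_{\kappa-\eta+1},\ldots,\varsigma_{\kappa}$. Contracting with $c^{\herm}$ on the left and $b$ on the right, the left-to-right pass collapses the $\varsigma$-part exactly as in part~(b), down to $w_{\eta}^{\herm}$ reaching the junction, while the right-to-left pass collapses the $\sigma$-part exactly as in part~(a), down to $v_{q}$ reaching it from the other side; the two projectors adjacent to $\mN{\sigma_{q}}{z}$ — $\Pi_{W}(\varsigma_{\kappa-\eta+1})$ on its left and $\Pi_{V}(\sigma_{q})$ on its right — are absorbed by $w_{\eta}^{\herm}$ and $v_{q}$, respectively, leaving $c^{\herm}\hmGs_{q+\eta}(\cdots\,|\,\cdots)b = w_{\eta}^{\herm}\mN{\sigma_{q}}{z}v_{q}$. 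Running the same two recursions on $\mGs_{q+\eta}$ without any projectors produces exactly this same value for $c^{\herm}\mGs_{q+\eta}(\cdots\,|\,\cdots)b$, which proves the claimed instance of~\cref{eqn:tangmod3}; the arbitrary $z$ is harmless precisely because the junction factor is shielded on one side by a vector of $\mspan(V)$ and on the other by a vector of $\mspan(W)$, neither built using $z$. The only genuinely fiddly step is verifying this slot-to-basis alignment uniformly in $q$ and $\eta$, including the degenerate boundary cases $q = 1$, $\eta = 1$, $q = k$, $\eta = \kappa$, where one of the two halves of the chain shrinks to a single factor.
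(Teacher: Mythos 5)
Your proposal is correct and follows essentially the same route as the paper's proof: your $\Pi_{V}(s)$ is exactly the projector $P_{\mathrm{v}_{j}} = V\hcK(\sigma_{j})^{-1}W^{\herm}\cK(\sigma_{j})$ used there, and the recursive absorption of projectors into $v_{q}$ and $w_{\eta}$ around the junction factor $\mN{\sigma_{q}}{z}$ is the paper's argument for Part~(c), with Parts~(a) and~(b) handled by the same one-sided collapse. Your explicit dual identity $V\hcK(s)^{-1}W^{\herm} = \Pi_{V}(s)\cK(s)^{-1} = \cK(s)^{-1}\Pi_{W}(s)$ is merely a more symmetric packaging of what the paper states as ``analogously, $W\hat{w}_{\eta} = w_{\eta}$.''
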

\begin{proof} \allowdisplaybreaks
  For brevity of the presentation, we restrict ourselves to prove Part~(c) of
  the theorem.
  Parts~(a) and~(b) can be proven analogously using the same projectors
  constructed in the following.
  The modified transfer functions of the reduced-order model are given by
  \begin{align*}
     c^{\herm} & \hmG{q + \eta}{\sigma_{1}, \ldots, \sigma_{q},
      \varsigma_{\kappa-\eta+1}, \ldots, \varsigma_{\kappa}}
      {d^{(1)}, \ldots, d^{(q-1)}, z, \delta^{(\kappa-\eta+1)}, \ldots, 
      \delta^{(\kappa-1)}} b \\
    & = \underbrace{c^{\herm} \hcC(\varsigma_{\kappa})
      \hcK(\varsigma_{\kappa})^{-1}
      \left( \prod\limits_{i = 1}^{\eta - 1}
      \hmN{\varsigma_{\kappa-i}}{\delta^{(\kappa-i)}}
      \hcK(\varsigma_{\kappa-i})^{-1} \right)}_{%
      \phantom{\,\hat{w}_{\eta}^{\herm}}=:\,\hat{w}_{\eta}^{\herm}}
      \hmN{\sigma_{q}}{z} \\
    & \quad \quad {}\times{} \underbrace{\left( \prod\limits_{j = 0}^{q-2} 
      \hcK(\sigma_{q-j})^{-1} \hmN{\sigma_{q-j-1}}{d^{(q-j-1)}} \right) 
      \hcK(\sigma_{1})^{-1} \hcB(\sigma_{1}) b}_{%
      \phantom{\,\hat{v}_{\eta}}=:\,\hat{v}_{\eta}}\\
    & = \hat{w}_{\eta}^{\herm} \hmN{\sigma_{q}}{z} \hat{v}_{q}\\
    & = \hat{w}_{\eta}^{\herm} W^{\herm} \mN{\sigma_{q}}{z} V \hat{v}_{q},
  \end{align*}
  for $1 \leq q \leq k$,  $1 \leq \eta \leq \kappa$, and an arbitrary vector
  $z \in \C^{m}$.
  The right-most product of the right-hand side can then be rewritten  using the construction of $V$ such
  that
  \begin{align*}
    V \hat{v}_{q} & = V \left( \prod\limits_{j = 0}^{q-3} 
      \hcK(\sigma_{q-j})^{-1} \hmN{\sigma_{q-j-1}}{d^{(q-j-1)}} \right) 
      \hcK(\sigma_{2})^{-1} \hmN{\sigma_{1}}{d^{(1)}} 
      \hcK(\sigma_{1})^{-1} \hcB(\sigma_{1}) b\\
    & = V \left( \prod\limits_{j = 0}^{q-3} 
      \hcK(\sigma_{q-j})^{-1} \hmN{\sigma_{q-j-1}}{d^{(q-j-1)}} \right) 
      \hcK(\sigma_{2})^{-1} W^{\herm} \mN{\sigma_{1}}{d^{(1)}}\\
    & \quad{}\times{} \underbrace{V \hcK(\sigma_{1})^{-1} W^{\herm} 
      \cK(\sigma_{1})}_{\phantom{\,P_{\rm{v}_{1}}}=:\,P_{\rm{v}_{1}}}
      \underbrace{\cK(\sigma_{1})^{-1} \cB(\sigma_{1})
      b}_{\phantom{\,v_{1}}=\,v_{1}}\\
    & = V \left( \prod\limits_{j = 0}^{q-3} 
      \hcK(\sigma_{q-j})^{-1} \hmN{\sigma_{q-j-1}}{d^{(q-j-1)}} \right) 
      \hcK(\sigma_{2})^{-1} W^{\herm} \mN{\sigma_{1}}{d^{(1)}} v_{1}\\
    & = \ldots\\
    & = V \hcK(\sigma_{q})^{-1} W^{\herm} \mN{\sigma_{q-1}}{d^{(q-1)}} v_{q-1}\\
    & = \underbrace{V \hcK(\sigma_{q})^{-1} W^{\herm} 
      \cK(\sigma_{q})}_{\phantom{\,P_{\rm{v}_{q}}}=:\,P_{\rm{v}_{q}}}  
      \underbrace{\cK(\sigma_{q})^{-1} \mN{\sigma_{q-1}}{d^{(q-1)}} 
      v_{q-1}}_{\phantom{\,v_{q}}=\,v_{q}}\\
    & = v_{q},
  \end{align*}
  where $P_{\rm{v}_{1}}, \ldots, P_{\rm{v}_{q}}$ are projectors onto 
  $\mspan(V)$, i.e., it holds $P_{\rm{v}_{j}} v = v$ for all $v \in \mspan(V)$
  and their recursive application gives the identity above.
  Analogously, one can show that
  \begin{align*}
    W \hat{w}_{\eta} & = w_{\eta},
  \end{align*}
 where $w_{1}, \ldots, w_{\eta} 
  \in \mspan(W)$.
  Combining this last equality together with $V \hat{v}_q = v_q$ yields
  \begin{align*}
    & c^{\herm} \hmG{q + \eta}{\sigma_{1}, \ldots, \sigma_{q},
      \varsigma_{\kappa-\eta+1}, \ldots, \varsigma_{\kappa}}
      {d^{(1)}, \ldots, d^{(q-1)}, z, \delta^{(\kappa-\eta+1)}, \ldots, 
      \delta^{(\kappa-1)}} b \\
    & = \hat{w}_{\eta}^{\herm} W^{\herm} \mN{\sigma_{q}}{z} V \hat{v}_{q}\\
    & = w_{\eta}^{\herm} \mN{\sigma_{q}}{z} v_{q}\\
    & = c^{\herm} \mG{q + \eta}{\sigma_{1}, \ldots, \sigma_{q},
      \varsigma_{\kappa-\eta+1}, \ldots, \varsigma_{\kappa}}
      {d^{(1)}, \ldots, d^{(q-1)}, z, \delta^{(\kappa-\eta+1)}, \ldots, 
      \delta^{(\kappa-1)}} b,
  \end{align*}
  which proves Part~(c).
\end{proof}

\begin{remark}[Implicit realization of blockwise interpolation]
  Part~(c) of \Cref{thm:tangmod} highlights an interesting interpolation
  property: The modified bilinear term in the middle between the interpolation
  by left and right projection allows for a completely arbitrary scaling
  vector $z$.
  Especially, by concatenation of higher-order transfer functions with respect
  to $z$, blockwise interpolation conditions hold true corresponding to the 
  centering bilinear term.
  To further illustrate this point via a simple example, construct $\mspan(V)$
  and $\mspan(W)$ as in~\Cref{thm:tangmod} such that $\mGf{\sigma} b$ and
  $c^{\herm} \mGf{\varsigma}$ are actively interpolated for 
  chosen interpolation points $\sigma, \varsigma \in \C$, and tangential
  directions $b \in \C^{m}$ and $c \in \C^{p}$.
  Then, by two-sided projection it holds additionally (Part~(c)
  of~\Cref{thm:tangmod}) that
  \begin{align*}
    \mG{2}{\sigma, \varsigma}{z} & = \hmG{2}{\sigma, \varsigma}{z},
  \end{align*}
  for all $z \in \C^{m}$.
  Choosing $z = \begin{bmatrix} 1 & 0 \end{bmatrix}^{\trans}$ and
  $z = \begin{bmatrix} 0 & 1 \end{bmatrix}^{\trans}$ yields the blockwise
  bi-tangential interpolation condition by concatenation:
  \begin{align*}
    c^{\herm} G_{2}(\sigma, \varsigma) (I_{m} \otimes b)
      & = c^{\herm} \hG_{2}(\sigma, \varsigma) (I_{m} \otimes b),
  \end{align*}
  More details on structure-preserving blockwise tangential interpolation and
  its relation to the unifying framework are shown later in
  \Cref{subsec:tangblock}.
\end{remark}

In addition to matching transfer function values, in practice, the interpolation
of sensitivities with respect to the frequency points, i.e., partial
derivatives, is crucial. 
The following theorem extends the interpolation results for modified transfer 
functions to Hermite interpolation.

\begin{theorem}[Modified structured tangential Hermite interpolation]%
  \label{thm:tangmodhermite}
  Let $G$ be a bilinear system, associated with the modified transfer 
  functions $\mGs_{k}$ in~\cref{eqn:tfmod}, and $\hG$ the reduced-order bilinear
  system, constructed by~\cref{eqn:proj} with its modified transfer functions
  $\hmGs_{k}$.
  Given sets of interpolation points $\sigma_{1}, \ldots, \sigma_{k} \in \C$ and
  $\varsigma_{1}, \ldots, \varsigma_{\kappa} \in \C$ such that the matrix 
  functions $\cC(s)$, $\cK(s)^{-1}$, $\cN(s)$, $\cB(s), \hcK(s)^{-1}$ are
  analytic in $s \in \{ \sigma_{1}, \ldots, \sigma_{k}, \varsigma_{1}, \ldots,
  \varsigma_{\kappa} \}$, two tangential directions $b \in \C^{m}$ and 
  $c \in \C^{p}$, and two sets of scaling vectors $d^{(1)}, \ldots, 
  d^{(k-1)} \in \C^{m}$ and $\delta^{(1)}, \ldots, \delta^{(\kappa-1)} \in
  \C^{m}$, the following statements hold:
  \begin{enumerate}[label=(\alph*)]
    \item If $V$ is constructed as
      \begin{align*}
        v_{1, j_{1}} & = \partial_{s^{j_{1}}} (\cK^{-1} \cB) (\sigma_{1}) b, &
          j_{1} & = 0, \ldots, \ell_{1},\\
        v_{2, j_{2}} & = \partial_{s^{j_{2}}} \cK^{-1} (\sigma_{2}) 
          \partial_{s^{\ell_{1}}} (\mN{.}{d^{(1)}} \cK^{-1} \cB) 
          (\sigma_{1}) b, &
          j_{2} & = 0, \ldots, \ell_{2},\\
        & \,\,\,\vdots \\
        v_{k, j_{k}} & = \partial_{s^{j_{k}}} \cK^{-1} (\sigma_{k})
          \left( \prod\limits_{j = 1}^{k - 2} \partial_{s^{\ell_{k-j}}} 
          (\mN{.}{d^{(k-j)}} \cK^{-1}) (\sigma_{k-j}) \right)\\
        & \quad{}\times{}
          \partial_{s^{\ell_{1}}} \left(\mN{.}{d^{(1)}} \cK^{-1} \cB\right) 
          (\sigma_{1}) b,
          & j_{k} & = 0,\ldots,\ell_{k},\\
        \mspan(V) & \supseteq \mspan([v_{1,0}, \ldots, v_{k, \ell_{k}}]),
      \end{align*}
      then the following interpolation conditions hold true:
      \vspace{-.25\baselineskip}
      \begin{align*} \arraycolsep=1.4pt \def\arraystretch{1.75}
        \begin{array}{rclrcl}
          \partial_{s_{1}^{j_{1}}} \mGf{\sigma_{1}} b & = &
            \partial_{s_{1}^{j_{1}}} \hmGf{\sigma_{1}} b, &
            \hspace{3em} j_{1} & = & 0,\ldots,\ell_{1},\\
          \partial_{s_{1}^{\ell_{1}} s_{2}^{j_{2}}}
            \mG{2}{\sigma_{1}, \sigma_{2}}{d^{(1)}} b & = & 
            \partial_{s_{1}^{\ell_{1}} s_{2}^{j_{2}}}
            \hmG{2}{\sigma_{1}, \sigma_{2}}{d^{(1)}} b, &
            j_{2} & = & 0,\ldots,\ell_{2},\\
          & \vdots & \\
          \multicolumn{3}{c}{
            \partial_{s_{1}^{\ell_{1}} \cdots s_{k-1}^{\ell_{k-1}} s_{k}^{j_{k}}}
            \mG{k}{\sigma_{1}, \ldots, \sigma_{k}}
            {d_{1}, \ldots, d_{k-1}} b}\qquad\qquad\\
          \multicolumn{3}{c}{
            = \partial_{s_{1}^{\ell_{1}} \cdots s_{k-1}^{\ell_{k-1}} 
              s_{k}^{j_{k}}} \hmG{k}{\sigma_{1}, \ldots, \sigma_{k}}
              {d_{1}, \ldots, d_{k-1}} b,} &
            j_{k} & = & 0,\ldots,\ell_{k}.
        \end{array}
      \end{align*}
    \item If $W$ is constructed as\\
      \resizebox{\linewidth}{!}{
      \begin{minipage}{1.01\linewidth}
      \begin{align*}
        w_{1,i_{\kappa}} & = \partial_{s^{i_{\kappa}}} \left(\cK^{-\herm}
          \cC^{\herm}\right)(\varsigma_{\kappa}) c,
          & i_{\kappa} & = 0,\ldots,\nu_{\kappa},\\
        w_{2,i_{\kappa-1}} & = \partial_{s^{i_{\kappa-1}}}\left(\cK^{-\herm} 
          \mN{.}{\delta^{(\kappa-1)}}^{\herm}\right) (\varsigma_{\kappa-1})
          \partial_{s^{\nu_{\kappa}}} (\cK^{-\herm} \cC^{\herm})
          (\varsigma_{\kappa}) c,
          & i_{\kappa-1} & = 0,\ldots,\nu_{\kappa-1},\\
        & \,\,\, \vdots\\
        w_{\kappa,i_{1}} & = \partial_{s^{i_{1}}} \left(\cK^{-\herm}
          \mN{.}{\delta^{(1)}}^{\herm}\right) (\varsigma_{1})\\
        & \quad{}\times{} \left( \prod\limits_{i = 2}^{\kappa - 1}
          \partial_{s^{\nu_{i}}} \left(\cK^{-\herm}
          \mN{.}{\delta^{(i)}}^{\herm}\right)
          (\varsigma_{i}) \right)\\
        & \quad{}\times{} \partial_{s^{\nu_{\kappa}}} \left(\cK^{-\herm}
          \cC^{\herm}\right) (\varsigma_{\kappa}) c,
          & i_{1} & = 0,\ldots,\nu_{1},\\
        \mathrm{span}(W) & \supseteq \mspan([w_{1,0}, \ldots, 
          w_{\kappa, \nu_{\kappa}}]),
      \end{align*}
      \end{minipage}}\\[\baselineskip]
      then the following interpolation conditions hold true:
      \vspace{-.25\baselineskip}
      \begin{align*} \arraycolsep=1.4pt \def\arraystretch{1.75}
        \begin{array}{rclrcl}
          c^{\herm} \partial_{s_{1}^{i_{\kappa}}} \mGf{\varsigma_{\kappa}}
            & = & c^{\herm} \partial_{s_{1}^{i_{\kappa}}}
            \hmGf{\varsigma_{\kappa}},
            & \hspace{1.2em} i_{\kappa} & = & 0,\ldots,\nu_{\kappa}, \\
          c^{\herm} \partial_{s_{1}^{i_{\kappa-1}} s_{2}^{\nu_{\kappa}}}
            \mG{2}{\varsigma_{\kappa-1}, \varsigma_{\kappa}}
            {\delta^{(\kappa-1)}} & = & c^{\herm} \partial_{s_{1}^{i_{\kappa-1}}
            s_{2}^{\nu_{\kappa}}} \hmG{2}{\varsigma_{\kappa-1}, 
            \varsigma_{\kappa}}{\delta^{(\kappa-1)}},
            & i_{\kappa-1} & = & 0,\ldots,\nu_{\kappa-1}, \\
          & \vdots & \\
          \multicolumn{3}{c}{c^{\herm} \partial_{s_{1}^{i_{1}} s_{2}^{\nu_{2}}
            \cdots s_{\kappa}^{\nu_{\kappa}}} \mG{\kappa}{\varsigma_{1}, \ldots,
            \varsigma_{\kappa}}{\delta_{1}, \ldots, \delta_{\kappa-1}}}
            \qquad\qquad\\
          \multicolumn{3}{c}{= c^{\herm} \partial_{s_{1}^{i_{1}} s_{2}^{\nu_{2}}
            \cdots s_{\kappa}^{\nu_{\kappa}}} \hmG{\kappa}{\varsigma_{1},\ldots,
            \varsigma_{\kappa}}{\delta_{1}, \ldots, \delta_{\kappa-1}},}
            & i_{1} & = & 0,\ldots,\nu_{1}.
        \end{array}
      \end{align*}
    \item Let $V$ be constructed as in Part~(a) and $W$ as in Part~(b).
      Then, additionally to the results in~(a) and~(b), the following
      conditions hold:
      \begin{align*}
        & c^{\herm} \partial_{s_{1}^{\ell_{1}} \cdots s_{q-1}^{\ell_{q-1}}
          s_{q}^{j_{q}} s_{q+1}^{i_{\kappa - \eta + 1}}
          s_{q+2}^{\nu_{\kappa - \eta + 2}} s_{q + \eta}^{\nu_{\kappa}}}
          \mGs_{q + \eta} (\sigma_{1}, \ldots, \sigma_{q},
          \varsigma_{\kappa-\eta+1}, \ldots, \varsigma_{\kappa} \,|\, \\
        & \qquad\quad d^{(1)}, \ldots, d^{(q-1)}, z,
          \delta^{(\kappa-\eta+1)}, \ldots, \delta^{(\kappa-1)}) b\\
        & \quad = c^{\herm} \partial_{s_{1}^{\ell_{1}} \cdots s_{q-1}^{\ell_{q-1}}
          s_{q}^{j_{q}} s_{q+1}^{i_{\kappa - \eta + 1}}
          s_{q+2}^{\nu_{\kappa - \eta + 2}} s_{q + \eta}^{\nu_{\kappa}}}
          \hmGs_{q + \eta} (\sigma_{1}, \ldots, \sigma_{q},
          \varsigma_{\kappa-\eta+1}, \ldots, \varsigma_{\kappa} \,|\, \\
        & \qquad\quad d^{(1)}, \ldots, d^{(q-1)}, z,
          \delta^{(\kappa-\eta+1)}, \ldots, \delta^{(\kappa-1)}) b,
      \end{align*}
      for $j_{q} = 0, \ldots, \ell_{q}$; $i_{\kappa - \eta + 1} = 0, \ldots,
      \nu_{\kappa - \eta + 1}$; $1 \leq q \leq k$, $1 \leq \eta \leq \kappa$,
      and an additional arbitrary scaling vector $z \in \C^{m}$.
  \end{enumerate}
\end{theorem}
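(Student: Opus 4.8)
The plan is to follow the proof of \Cref{thm:tangmod} and upgrade it from matching transfer function values to matching derivatives; the one genuinely new analytic ingredient is an elementary Hermite interpolation lemma from rational interpolation theory (cf.\ \cite{BeaG09}). In the form I will use it, it reads: if $\cK$ is analytic at $\sigma \in \C$ with $\cK(\sigma)$ invertible, $\hcK(s)^{-1} = (W^{\herm} \cK(s) V)^{-1}$ is analytic at $\sigma$, and $r \colon \C \to \C^{n}$ is analytic at $\sigma$ with $\partial_{s^{i}}(\cK^{-1} r)(\sigma) \in \mspan(V)$ for $i = 0, \ldots, \ell$, then
\[
  \partial_{s^{i}} \bigl( V \hcK^{-1} W^{\herm} r \bigr)(\sigma)
    = \partial_{s^{i}}(\cK^{-1} r)(\sigma), \qquad i = 0, \ldots, \ell .
\]
I would prove this by the standard Taylor--remainder argument: write $\phi := \cK^{-1} r$ as its degree-$\ell$ Taylor polynomial $\phi_{\ell}$ at $\sigma$ plus an $O((s - \sigma)^{\ell+1})$ remainder, observe that $\phi_{\ell}$ is $\mspan(V)$-valued, hence $\phi_{\ell} = V \hat{\phi}_{\ell}$ for a vector polynomial $\hat{\phi}_{\ell}$, and combine $\cK(s) \phi(s) = r(s)$ with $\hcK(s) \hat{\phi}_{\ell}(s) = W^{\herm} \cK(s) \phi_{\ell}(s)$ to get $\hat{\phi}_{\ell}(s) = \hcK(s)^{-1} W^{\herm} r(s) + O((s - \sigma)^{\ell+1})$; the claim then follows from $V \hat{\phi}_{\ell} = \phi_{\ell}$. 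A transposed version, with $\cK^{\herm}$, $\cC^{\herm}$ and $W$ in place of $\cK$, $\cB$ and $V$, serves Part~(b).

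For Part~(a), introduce the chain functions $F_{1}(s_{1}) := \cK(s_{1})^{-1} \cB(s_{1}) b$ and $F_{j}(s_{1}, \ldots, s_{j}) := \cK(s_{j})^{-1} \mN{s_{j-1}}{d^{(j-1)}} F_{j-1}(s_{1}, \ldots, s_{j-1})$ for $2 \leq j \leq k$, with reduced-order counterparts $\hat{F}_{j}$ built from the quantities in~\cref{eqn:proj}, so that $\mG{j}{\sigma_{1}, \ldots, \sigma_{j}}{d^{(1)}, \ldots, d^{(j-1)}} b = \cC(\sigma_{j}) F_{j}$ and $\hmG{j}{\sigma_{1}, \ldots, \sigma_{j}}{d^{(1)}, \ldots, d^{(j-1)}} b = \cC(\sigma_{j}) V \hat{F}_{j}$. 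It therefore suffices to establish, by induction on $j$, the matching statement $(\star_{j})$: $\partial_{s_{1}^{\ell_{1}} \cdots s_{j-1}^{\ell_{j-1}} s_{j}^{i}}(V \hat{F}_{j} - F_{j})(\sigma_{1}, \ldots, \sigma_{j}) = 0$ for $i = 0, \ldots, \ell_{j}$. The base case $j = 1$ is the lemma with $r = \cB(\cdot) b$, since $\partial_{s^{i}} F_{1}(\sigma_{1}) = v_{1,i} \in \mspan(V)$. For the inductive step, $(\star_{j-1})$ together with the Leibniz rule applied to the analytic factor $\mN{\cdot}{d^{(j-1)}}$ shows that the ``frozen'' vectors $\rho_{j} := \partial_{s_{1}^{\ell_{1}} \cdots s_{j-1}^{\ell_{j-1}}} \bigl( \mN{\cdot}{d^{(j-1)}} F_{j-1} \bigr)(\sigma_{1}, \ldots, \sigma_{j-1})$ and their reduced analogues satisfy $\hat{\rho}_{j} = W^{\herm} \rho_{j}$; since $\partial_{s_{1}^{\ell_{1}} \cdots s_{j-1}^{\ell_{j-1}}} F_{j}(\sigma_{1}, \ldots, \sigma_{j-1}, s_{j}) = \cK(s_{j})^{-1} \rho_{j}$ is a genuine one-variable function of $s_{j}$ whose derivatives at $\sigma_{j}$ up to order $\ell_{j}$ are precisely the construction vectors $v_{j,0}, \ldots, v_{j,\ell_{j}} \in \mspan(V)$, applying the lemma in the variable $s_{j}$ gives $(\star_{j})$. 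The interpolation conditions of Part~(a) then follow from $(\star_{j})$ and one further Leibniz expansion with the analytic left factor $\cC(s_{j})$.

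Part~(b) is obtained by the transposed version of this argument, interchanging the roles of $V$ and $W$ and of $\cB$ and $\cC$ and replacing $\cK, \cN$ by $\cK^{\herm}, \cN^{\herm}$; the $W$-construction in the statement is exactly the dual of the $V$-construction. For Part~(c) I would factor the level-$(q+\eta)$ modified transfer function, evaluated at the mixed points and scaling vectors, as
\[
  c^{\herm} \mGs_{q+\eta}(\sigma_{1}, \ldots, \sigma_{q},
    \varsigma_{\kappa-\eta+1}, \ldots, \varsigma_{\kappa} \,|\, \ldots) b
  = \Psi(\varsigma_{\kappa-\eta+1}, \ldots, \varsigma_{\kappa})^{\herm}\,
    R(\sigma_{1}, \ldots, \sigma_{q}),
\]
where $\Psi^{\herm}$ collects the $\cC, \cK^{-1}, \cN$ factors carrying the scaling vectors $\delta^{(\kappa-\eta+1)}, \ldots, \delta^{(\kappa-1)}$ and depends only on the $\varsigma$-variables (up to conjugate transpose it is the length-$\eta$ chain of Part~(b)), while $R := \mN{\sigma_{q}}{z} F_{q}(\sigma_{1}, \ldots, \sigma_{q})$ depends only on the $\sigma$-variables; for the reduced model the same factorization reads $\bigl( \hat{\Psi}^{\herm} W^{\herm} \bigr) \bigl( \mN{\sigma_{q}}{z} V \hat{F}_{q} \bigr)$ after inserting $\hmN{\sigma_{q}}{z} = W^{\herm} \mN{\sigma_{q}}{z} V$. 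Because $\Psi$ and $R$ involve disjoint sets of frequency variables, the mixed partial derivative in the claim splits into a $\varsigma$-part acting on $\Psi^{\herm}$, matched to the stated orders by Part~(b), and a $\sigma$-part acting on $R$, matched by $(\star_{q})$ together with one Leibniz expansion across the centering factor $\mN{\sigma_{q}}{z}$ --- which needs only the $s_{q}$-derivatives of $V \hat{F}_{q} - F_{q}$ up to order $\ell_{q}$, exactly what $(\star_{q})$ provides. Multiplying the two matched factors yields Part~(c).

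The main obstacle is the derivative bookkeeping inside the recursion: the reduced chain $\hat{F}_{j}$ genuinely depends on all earlier frequency variables, so the Hermite lemma cannot be applied to it directly. The resolution is the ``freeze the lower-indexed variables at their prescribed orders'' step, which is legitimate precisely because those derivatives of $\mN{\cdot}{d^{(j-1)}} F_{j-1}$ and of its reduced counterpart coincide \emph{exactly}, as vectors, by the induction hypothesis and the Leibniz rule; once frozen, each stage becomes a clean one-variable interpolation problem. Secondary points requiring care are the conjugate-transpose bookkeeping in Part~(b) and, in Part~(c), the fact that the centering scaling vector $z$ enters only through the Leibniz rule, so that no constraint on $z$ beyond analyticity of $\cN$ is required.
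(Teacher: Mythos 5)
Your proposal is correct, and it follows essentially the same route as the paper: the paper's proof is a one-sentence deferral to the recursive projector argument of Theorem 1 combined with the Hermite-interpolation ideas of the cited reference, and your Taylor-remainder lemma together with the chain-function induction (freezing the lower-indexed variables at their prescribed derivative orders so each stage becomes a one-variable problem) is exactly the content of that deferred argument written out in full. The variable-separation factorization you use for Part (c) likewise mirrors the $\hat{w}_{\eta}^{\herm}\, W^{\herm}\, \mN{\sigma_{q}}{z}\, V\, \hat{v}_{q}$ splitting in the paper's proof of Theorem 1(c).
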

\begin{proof}
  The proof works analogously to \Cref{thm:tangmod}, using appropriate
  projectors onto $\mspan(V)$ or $\mspan(W)$ and the ideas from
  the proof of~\cite[Thm.~9]{BenGW21a} for fixed scaling vectors
  $d^{(1)}, \ldots, d^{(k-1)}$ and $\delta^{(1)}, \ldots, \delta^{(\kappa-1)}$.
\end{proof}

To complete the theory for our new unifying interpolation framework,
we consider the special cases of \Cref{thm:tangmod,thm:tangmodhermite} 
by using identical sets of interpolation points and scaling vectors in the 
bi-tangential interpolation case.
As in~\cite{BeaG09, BenGW21a}, this allows interpolation of partial
derivatives implicitly.
Due to the dependency of the modified transfer functions on the scaling vectors,
we will also interpolate now derivatives with respect to those scaling vectors.
Therefore, the notion of the Jacobian matrix~\cref{eqn:jacobi} for the modified
transfer functions will be given as
\begin{align*}
  \nabla \mGs_{k} & = \left[ \partial_{s_{1}} \mGs_{k}, \ldots, 
    \partial_{s_{k}} \mGs_{k}, \partial_{d^{(1)}_{1}} \mGs_{k}, \ldots, 
    \partial_{d^{(1)}_{m}} \mGs_{k}, \ldots, \partial_{d^{(k-1)}_{1}} \mGs_{k}, 
    \ldots, \partial_{d^{(k-1)}_{m}} \mGs_{k} \right].
\end{align*}

\begin{theorem}[Modified structured bi-tangential interpolation with 
  identical point sets]%
  \label{thm:tangmodtwosided}
  Let $G$ be a bilinear system, associated with the modified transfer 
  functions $\mGs_{k}$ in~\cref{eqn:tfmod}, and $\hG$ the reduced-order bilinear
  system, constructed by~\cref{eqn:proj} with its modified transfer functions
  $\hmGs_{k}$.
  Given a set of interpolation points $\sigma_{1}, \ldots, \sigma_{k} \in \C$
  such that the matrix functions $\cC(s)$, $\cK(s)^{-1}$, $\cN(s)$, $\cB(s)$,
  $\hcK(s)^{-1}$ are analytic in $s \in \{ \sigma_{1}, \ldots, \sigma_{k} \}$,
  two tangential directions $b \in \C^{m}$ and $c \in \C^{p}$, and scaling
  vectors $d^{(1)}, \ldots, d^{(k-1)} \in \C^{m}$, the following statements
  hold:
  \begin{enumerate}[label=(\alph*)]
    \item Let $V$ and $W$ be constructed as in \Cref{thm:tangmod} 
      Parts~(a) and~(b) for the interpolation points
      $\sigma_{1} = \varsigma_{1}$, $\ldots$, $\sigma_{k} = \varsigma_{k}$ and
      the scaling vectors $d^{(1)} = \delta^{(1)}$, $\ldots$,
      $d^{(k-1)} = \delta^{(k-1)}$. Then, in addition to the interpolation
      conditions in \Cref{thm:tangmod}, it holds
      \begin{align*}
        & \nabla \left( c^{\herm} \mGs_{k} b \right)
          \margs{\sigma_{1}, \ldots, \sigma_{k}}{d^{(1)}, \ldots, d^{(k-1)}}\\
        & \qquad = \nabla \left( c^{\herm} \hmGs_{k} b \right)
          \margs{\sigma_{1}, \ldots, \sigma_{k}}{d^{(1)}, \ldots, d^{(k-1)}}.
      \end{align*}
    \item Let $V$ and $W$ be constructed as in \Cref{thm:tangmodhermite} 
      Parts~(a) and~(b) for the interpolation points
      $\sigma_{1} = \varsigma_{1}$, $\ldots$, $\sigma_{k} = \varsigma_{k}$,
      the derivative orders $\ell_{1} = \nu_{1}$, $\ldots$,
      $\ell_{k} = \nu_{k}$,
      and the scaling vectors $d^{(1)} = \delta^{(1)}$, $\ldots$,
      $d^{(k-1)} = \delta^{(k-1)}$.
      Then, in addition to the interpolation conditions in
      \Cref{thm:tangmodhermite}, it holds
      \begin{align*}
        & \nabla \left( c^{\herm} \partial_{s_{1}^{\ell_{1}} \cdots
          s_{k}^{\ell_{k}}} \mGs_{k} b \right)
          \margs{\sigma_{1}, \ldots, \sigma_{k}}{d^{(1)}, \ldots, d^{(k-1)}}\\
        & \qquad = \nabla \left( c^{\herm} \partial_{s_{1}^{\ell_{1}} \cdots
          s_{k}^{\ell_{k}}} \hmGs_{k} b \right)
          \margs{\sigma_{1}, \ldots, \sigma_{k}}{d^{(1)}, \ldots, d^{(k-1)}}.
      \end{align*}
  \end{enumerate}
\end{theorem}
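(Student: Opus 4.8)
The plan is to leverage the two preceding theorems. Under the identical-point constructions of Part~(a) (resp.\ Part~(b)), \Cref{thm:tangmod} (resp.\ \Cref{thm:tangmodhermite}) already supplies every matching of modified transfer function \emph{values} (resp.\ of their mixed frequency derivatives up to the orders $\ell_1,\ldots,\ell_k$), including the bi-tangential ones coming from Part~(c) of those theorems; what is left to establish is only the \emph{additional} first-order sensitivity match, i.e.\ that every component of $\nabla$ of the error vanishes at the configuration \margs{\sigma_1,\ldots,\sigma_k}{d^{(1)},\ldots,d^{(k-1)}}. Since $\nabla\mGs_k$ collects the frequency derivatives $\partial_{s_1}\mGs_k,\ldots,\partial_{s_k}\mGs_k$ together with the scaling derivatives $\partial_{d^{(1)}_1}\mGs_k,\ldots,\partial_{d^{(k-1)}_m}\mGs_k$, I would handle these two groups separately.

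First, the scaling derivatives. The modified transfer function $\mGs_k$ is \emph{linear} in each $d^{(j)}$ through the factor $\mN{s_j}{d^{(j)}}=\sum_{\ell=1}^m d^{(j)}_\ell\,\cN_\ell(s_j)$, and so is $\hmGs_k$. Invoking Part~(c) of \Cref{thm:tangmod} with identical interpolation points $\varsigma_i=\sigma_i$ and identical scaling vectors $\delta^{(i)}=d^{(i)}$, and with $q=j$, $\eta=k-j$ (which is admissible for every $j\in\{1,\ldots,k-1\}$ because the right chain $v_1,\ldots,v_k$ and the left chain $w_1,\ldots,w_k$ of \Cref{thm:tangmod} are now built from the same data and hence both available), yields, for \emph{every} $z\in\C^m$ inserted at scaling position $j$,
\[
  c^{\herm}\,\mG{k}{\sigma_1,\ldots,\sigma_k}{d^{(1)},\ldots,d^{(j-1)},z,d^{(j+1)},\ldots,d^{(k-1)}}\,b
  = c^{\herm}\,\hmG{k}{\sigma_1,\ldots,\sigma_k}{d^{(1)},\ldots,d^{(j-1)},z,d^{(j+1)},\ldots,d^{(k-1)}}\,b .
\]
Both sides being affine in $z$, differentiating this identity with respect to $z_\ell$ at $z=d^{(j)}$ gives $\partial_{d^{(j)}_\ell}(c^{\herm}\mGs_k b)=\partial_{d^{(j)}_\ell}(c^{\herm}\hmGs_k b)$ at the configuration for all $j$ and $\ell$. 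This step genuinely needs both identical interpolation points and identical scaling vectors, which is precisely why these hypotheses appear. For Part~(b), the same reasoning applied to Part~(c) of \Cref{thm:tangmodhermite} (with $\ell_i=\nu_i$, the orders on $s_1,\ldots,s_q$ taken maximal, and $z$ at scaling position $j$) matches the scaling derivatives of the $(\ell_1,\ldots,\ell_k)$-th frequency derivative.

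Second, the frequency derivatives, which is the genuinely Hermite part; here I would run the projector argument of \cite[Thm.~9]{BenGW21a} (going back to \cite{BeaG09}) with the scaling vectors held fixed at $d^{(j)}=\delta^{(j)}$. For each factor $\cK(s_j)^{-1}$ introduce the $s$-dependent oblique projector $\mathcal{P}(s):=V\hcK(s)^{-1}W^{\herm}\cK(s)$ onto $\mspan(V)$ and the companion $\mathcal{Q}(s):=\cK(s)V\hcK(s)^{-1}W^{\herm}$, whose adjoint projects onto $\mspan(W)$, so that $V\hcK(s)^{-1}W^{\herm}=\mathcal{P}(s)\cK(s)^{-1}=\cK(s)^{-1}\mathcal{Q}(s)$ and consequently $\hmGs_k$ is $\mGs_k$ with $\mathcal{P}(s_j)$ (equivalently $\mathcal{Q}(s_j)$) inserted at each $\cK(s_j)^{-1}$. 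Writing the error $c^{\herm}(\mGs_k-\hmGs_k)b$ as a telescoping sum over the junctions $j=1,\ldots,k$ --- the part left of junction $j$ kept in reduced (projected) form and the part right of it in full form --- each summand carries a factor $\bigl(I-\mathcal{P}(s_j)\bigr)\cK(s_j)^{-1}=\cK(s_j)^{-1}\bigl(I-\mathcal{Q}(s_j)\bigr)$ at that junction. At the configuration the full right chain up to junction $j$ equals $v_j\in\mspan(V)$, so $\bigl(I-\mathcal{P}(\sigma_j)\bigr)v_j=0$; and, collapsing the reduced left chain via the $\mathcal{Q}$-form (using $w_1,\ldots,w_{k-j}\in\mspan(W)$), its undifferentiated part reduces to a row of the shape $w_{\bullet}^{\herm}\bigl(I-\mathcal{Q}(\sigma_j)\bigr)(\cdots)=0$. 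Thus every summand vanishes at the configuration, and applying $\partial_{s_i}$ once --- which by the product rule lands on a single factor --- still leaves a vanishing factor in each resulting term, except when $\partial_{s_j}$ hits the junction factor $\bigl(I-\mathcal{P}(s_j)\bigr)\cK(s_j)^{-1}$ itself; there a short computation, using that $\partial_s\bigl(\mathcal{P}(s)v\bigr)\equiv0$ for $v\in\mspan(V)$ and $\partial_s\bigl(w^{\herm}\mathcal{Q}(s)\bigr)\equiv0$ for $w\in\mspan(W)$ together with the chain identities $w_{k-j}^{\herm}\mN{\sigma_j}{d^{(j)}}\cK(\sigma_j)^{-1}=w_{k-j+1}^{\herm}$, shows the two resulting pieces cancel. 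Hence $\partial_{s_i}(c^{\herm}(\mGs_k-\hmGs_k)b)$ vanishes at the configuration for every $i$; the boundary variables $s_1$ (also entering $\cB$) and $s_k$ (also entering $\cC$) are handled identically, since $\cC$ and $\cB$ lie outside the outermost junctions. For Part~(b) the same telescoping and single differentiation are carried out with the value replaced by the $(\ell_1,\ldots,\ell_k)$-th mixed frequency derivative and with $v_j,w_i$ replaced by the higher-order chains of \Cref{thm:tangmodhermite}; the gain of one additional order in every frequency variable is exactly the two-sided-projection (Hermite) effect.

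The hard part will be the bookkeeping in this telescoping decomposition: making precise, with \emph{both} the factors $\cK(s_j)^{-1}$ \emph{and} the bilinear factors $\mN{s_j}{d^{(j)}}$ present, that each summand has a genuine double zero at its junction that persists after one differentiation, and keeping straight on which side of each junction to use the $\mathcal{P}$- versus the $\mathcal{Q}$-representation of $\hmGs_k$ so that the $v_j\in\mspan(V)$ and $w_i\in\mspan(W)$ arguments apply cleanly --- this is essentially the computation of \cite[Thm.~9]{BenGW21a} for fixed scaling vectors. The genuinely new ingredient over that reference, the sensitivities with respect to the scaling vectors $d^{(j)}$, is by comparison easy, once one exploits the linearity of $\mN{s_j}{d^{(j)}}$ in $d^{(j)}$ as in the second paragraph.
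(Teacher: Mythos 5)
Your proposal is correct and follows essentially the same route as the paper: the scaling-vector sensitivities are handled via the projector identities $V\hat{v}=v$, $W\hat{w}=w$ (your derivation from Part~(c) of \Cref{thm:tangmod} plus linearity of $\mN{\sigma_j}{z}$ in $z$ is just a repackaging of the paper's direct computation, since $\partial_{d^{(j)}_i}\mN{\sigma_j}{d^{(j)}}=\mN{\sigma_j}{e_i}$), and the frequency sensitivities follow the standard two-sided Hermite argument that the paper only cites from \cite[Cor.~2]{BenGW21a}. Your telescoping/double-projector factorization $\cK(s)^{-1}-V\hcK(s)^{-1}W^{\herm}=(I-\mathcal{P}(s))\cK(s)^{-1}(I-\mathcal{Q}(s))$ is exactly the mechanism that makes that cited argument work, so no gap remains.
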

\begin{proof} \allowdisplaybreaks
  First, we consider the partial derivatives with respect to the scaling
  vectors.
  For arbitrary $1 \leq j \leq k-1$ and $1 \leq i \leq m$, we obtain
  \begin{align*}
    & \partial_{d^{(j)}_{i}} \left( c^{\herm} \hmGs_{k} b \right)
      \margs{\sigma_{1}, \ldots, \sigma_{k}}{d^{(1)}, \ldots, d^{(k-1)}}\\
    & = \underbrace{c^{\herm} \hcC(\sigma_{k}) \hcK(\sigma_{k})^{-1} \left(
      \prod\limits_{\ell = 1}^{k-j-1} \hmN{\sigma_{k-\ell}}{d^{(k-\ell)}}
      \hcK(\sigma_{k-\ell})^{-1} \right)}_{%
      \phantom{\,\hat{w}_{k-j-1}^{\herm}}=:\,\hat{w}_{k-j-1}^{\herm}}
      \left( \partial_{d^{(j)}_{i}}
      \hmN{\sigma_{j}}{d^{(j)}} \right)\\
    & \quad{}\times{}
      \underbrace{\left( \prod\limits_{\ell = j+1}^{k-1} 
      \hmN{\sigma_{k-\ell}}{d^{(k-\ell)}} \hcK(\sigma_{k-\ell})^{-1} \right)
      \hcB(s_{1}) b}_{%
      \phantom{\,\hat{v}_{k-j-1}}=:\,\hat{v}_{k-j-1}}\\
    & = \hat{w}_{k-j-1}^{\herm} \left( \partial_{d^{(j)}_{i}}
      \hmN{\sigma_{j}}{d^{(j)}} \right) \hat{v}_{k-j-1}\\
    & = \hat{w}_{k-j-1}^{\herm} W^{\herm} \left( \partial_{d^{(j)}_{i}}
      \mN{\sigma_{j}}{d^{(j)}} \right) V \hat{v}_{k-j-1}
  \end{align*}
  such that only the modified bilinear term corresponding to the scaling vector
  $d^{(j)}$ needs to be differentiated.
  Using  the same approach as in the proof of \Cref{thm:tangmod} and the
  construction of $\mspan(V)$ and $\mspan(W)$ yields the two equalities
  \begin{align*}
    \begin{aligned}
      V \hat{v}_{k-j-1} & = v_{k-j-1} & \text{and} &&
        W \hat{w}_{k-j-1} & = w_{k-j-1},
    \end{aligned}
  \end{align*}
  which gives
  \begin{align*}
    & \partial_{d^{(j)}_{i}} \left( c^{\herm} \hmGs_{k} b \right)
      \margs{\sigma_{1}, \ldots, \sigma_{k}}{d^{(1)}, \ldots, d^{(k-1)}}\\
    & \qquad = \hat{w}_{k-j-1}^{\herm} W^{\herm} \left( \partial_{d^{(j)}_{i}}
      \mN{\sigma_{j}}{d^{(j)}} \right) V \hat{v}_{k-j-1}\\
    & \qquad = w_{k-j-1}^{\herm} \left( \partial_{d^{(j)}_{i}}
      \mN{\sigma_{j}}{d^{(j)}} \right) v_{k-j-1}\\
    & \qquad = \partial_{d^{(j)}_{i}} \left( c^{\herm} \mGs_{k} b \right)
      \margs{\sigma_{1}, \ldots, \sigma_{k}}{d^{(1)}, \ldots, d^{(k-1)}},
  \end{align*}
  for all $1 \leq j \leq k-1$ and $1 \leq i \leq m$.
  Therefore, the interpolation condition holds for all partial derivatives with
  respect to the scaling vectors.
  The results for the partial derivatives with respect to the frequency
  arguments can be proven analogously and in principle follow the ideas
  from~\cite[Cor.~2]{BenGW21a}.
  This proves Part~(a).
  Part~(b) can be proven analogously to Part~(a) by replacing the simple
  interpolation by the Hermite version from \Cref{thm:tangmodhermite}.
  For brevity of the paper, we skip those details.
\end{proof}

\begin{remark}[Using multiple sets of interpolation points]
  While all results in this section are formulated for a single set of
  interpolation points $\sigma_{1}, \ldots, \sigma_{k} \in \C$, they can
  be extended to multiple sets by concatenation of the model reduction
  bases.
  Consider, for example, Part~(a) of \Cref{thm:tangmod}.
  Let $\sigma_{1}^{(1)}, \ldots, \sigma_{k}^{(1)}$, $\ldots$,
  $\sigma_{1}^{(n_{\rm{s}})}, \ldots, \sigma_{k}^{(n_{\rm{s}})} \in \C$
  be $n_{\rm{s}}$ sets of interpolation points and $V^{(1)}, \ldots,
  V^{(n_{\rm{s}})}$ be the corresponding basis matrices such that the
  corresponding reduced-order models (tangentially) interpolate the original
  model for the given sets of interpolation points.
  Then, another reduced-order model can be constructed to satisfy all
  interpolation conditions associated with $V^{(1)}, \ldots,
  V^{(n_{\rm{s}})}$ by choosing
  \begin{align*}
    \mspan(V) \supseteq \mspan([V^{(1)}, \ldots, V^{(n_{\rm{s}})}]),
  \end{align*}
  as the new truncation matrix $V$, and any $W$ of appropriate dimension and
  full column rank.
\end{remark}

%%%%%%%%%%%%%%%%%%%%%%%%%%%%%%%%%%%%%%%%%%%%%%%%%%%%%%%%%%%%%%%%%%%%%%%%%%%%%%%%

\subsection{Special case: Structured blockwise tangential interpolation}%
\label{subsec:tangblock}

As mentioned in \Cref{subsec:tangmod}, the new unifying tangential interpolation
framework can also be used to obtain results for blockwise tangential
interpolation.
Due to its relevance and common use in model reduction of MIMO bilinear systems,
we will state the corresponding results in this section in more detail.

First, we will generalize the idea of blockwise tangential interpolation
introduced in \Cref{subsec:tangblockstd} to the structured case.
Therefore, we start by analyzing the multivariate transfer
functions~\cref{eqn:tf}.
Multiplying out the Kronecker products, we observe that~\cref{eqn:tf} is
actually given as concatenation of products of the linear dynamics and the
bilinear terms 
\begin{align} \label{eqn:tfblocks}
  \begin{aligned}
    G_{k}(s_{1}, \ldots, s_{k}) & = \big[ \cC(s_{k}) \cK(s_{k})^{-1} 
      \cN_{1}(s_{k-1}) \cK(s_{k-1})^{-1} \cdots \cN_{1}(s_{1}) \cK(s_{1})^{-1} 
      \cB(s_{1}),\\
    & \phantom{= \big[~} \cC(s_{k}) \cK(s_{k})^{-1} \cN_{1}(s_{k-1})
      \cK(s_{k-1})^{-1} \cdots \cN_{2}(s_{1}) \cK(s_{1})^{-1} \cB(s_{1}), \\
    & \phantom{= \big[~} \ldots\\
    & \phantom{= \big[~} \cC(s_{k}) \cK(s_{k})^{-1} \cN_{m}(s_{k-1}) 
      \cK(s_{k-1})^{-1} \cdots \cN_{m}(s_{1}) \cK(s_{1})^{-1} \cB(s_{1}) \big].
  \end{aligned}
\end{align}
Extending on the ideas from \Cref{subsec:tangblockstd},
we consider each block entry of~\cref{eqn:tfblocks} as separate transfer
function and for each of them use tangential interpolation with the same
directions.
In other words, given the right tangential direction $b \in \C^{m}$, we consider
\begin{align*}
  \begin{aligned}
    G_{k}(s_{1}, \ldots, s_{k}) (I_{m} \otimes b) 
    & = \big[ \cC(s_{k}) \cK(s_{k})^{-1} \cN_{1}(s_{k-1}) \cK(s_{k-1})^{-1} 
      \cdots \cN_{1}(s_{1}) \cK(s_{1})^{-1} \cB(s_{1}) b, \\
    & \phantom{= \big[~} \cC(s_{k}) \cK(s_{k})^{-1} \cN_{1}(s_{k-1})
      \cK(s_{k-1})^{-1} \cdots \cN_{2}(s_{1}) \cK(s_{1})^{-1} \cB(s_{1}) b, \\
    & \phantom{= \big[~} \ldots\\
    & \phantom{= \big[~} \cC(s_{k}) \cK(s_{k})^{-1} \cN_{m}(s_{k-1})
      \cK(s_{k-1})^{-1} \cdots \cN_{m}(s_{1}) \cK(s_{1})^{-1} \cB(s_{1}) b \big]
  \end{aligned}
\end{align*}
as blockwise evaluation of the transfer function in the direction $b$.
This formulation extends the blockwise tangential interpolation
problem from~\cref{eqn:tangblock1,eqn:tangblock2,eqn:tangblock3} to the
structure-preserving setting.

The modified tangential interpolation framework can now be used to obtain 
the subspace conditions on the blockwise tangential interpolation.
Choose the scaling vectors $d^{(j)}$ in~\cref{eqn:tfmod} to be
columns of the $m$-dimensional identity matrix.
Then, the single block entries of~\cref{eqn:tfblocks} are given as the modified
transfer functions~\cref{eqn:tfmod} for specific choices of 
scaling vectors. For example, choosing $d^{(1)} = \ldots = d^{(k-1)} = e_{1}$
to be the first column of the $m$-dimensional identity matrix yields
\begin{align*}
  \mG{k}{s_{1}, \ldots, s_{k}}{e_{1}, \ldots, e_{1}} = \cC(s_{k})
    \cK(s_{k})^{-1} \cN_{1}(s_{k-1}) \cK(s_{k-1})^{-1} \cdots \cN_{1}(s_{1}) 
    \cK(s_{1})^{-1} \cB(s_{1}),
\end{align*}
which is the first block in~\cref{eqn:tfblocks}.
By column concatenation of these modified transfer functions,
\cref{eqn:tfblocks}~can be completely recovered:
\begin{align} \label{eqn:blockmod}
  \begin{aligned}
    G_{k}(s_{1}, \ldots, s_{1}) &
      = \big[ \mG{k}{s_{1}, \ldots, s_{k}}{e_{1}, \ldots, e_{1}},\\
    & \phantom{= \big[~} \mG{k}{s_{1}, \ldots, s_{k}}{e_{1}, \ldots, e_{2}}, \\
    & \phantom{= \big[~} \ldots,\\
    & \phantom{= \big[~} \mG{k}{s_{1}, \ldots, s_{k}}{e_{m}, \ldots, e_{m}}
      \big].
  \end{aligned}
\end{align}
Consequently, the blockwise interpolation results are given by concatenation of
the corresponding model reduction bases constructed for all necessary modified
transfer functions and the tangential directions.
Due to the significance of the blockwise tangential interpolation in the
literature~\cite{BenB12, RodGB18} and the complexity of its recovery from
the unifying framework, we will state in the following the
structure-preserving interpolation results for blockwise tangential
interpolation.
Note that the proofs directly follow from the previous section and by
concatenation as discussed above.

\begin{remark}[Matrix interpolation]
  It should be noted that the matrix interpolation results
  from~\cite{BenGW21a} can also be recovered from the modified tangential
  interpolation framework.
  As the relation~\cref{eqn:blockmod} shows, removing the
  tangential directions in the construction of the projection spaces
  will yield the matrix interpolation results.
  Thus matrix interpolation is also a special case of the modified tangential
  interpolation framework.
\end{remark}

The first result follows from \Cref{thm:tangmod}.

\begin{corollary}[Structured blockwise tangential interpolation]%
  \label{cor:tangblock}
  Let $G$ be a bilinear system, described by its subsystem transfer functions
  in~\cref{eqn:tf}, and $\hG$ the reduced-order bilinear system, constructed
  by~\cref{eqn:proj} with the corresponding subsystem transfer functions
  $\hG_{k}$.
  Given sets of interpolation points $\sigma_{1}, \ldots, \sigma_{k} \in \C$ and
  $\varsigma_{1}, \ldots, \varsigma_{\kappa} \in \C$ such that the matrix 
  functions $\cC(s)$, $\cK(s)^{-1}$, $\cN(s)$, $\cB(s), \hcK(s)^{-1}$ are
  defined for $s \in \{ \sigma_{1}, \ldots, \sigma_{k}, \varsigma_{1}, \ldots,
  \varsigma_{\kappa} \}$, and two tangential directions $b \in \C^{m}$ 
  and $c \in \C^{p}$, the following statements hold:
  \begin{enumerate}[label=(\alph*)]
    \item If $V$ is constructed as
      \begin{align*}
        V_{1} & = \cK(\sigma_{1})^{-1}\cB(\sigma_{1})b,\\
        V_{j} & = \cK(\sigma_{j})^{-1}\cN(\sigma_{j-1})
          (I_{m} \otimes V_{j-1}), & 2 \leq j \leq k,\\
        \mspan(V) & \supseteq \mspan\left([V_{1}, \ldots,
        V_{k}]\right),
      \end{align*}
      then the following interpolation conditions hold true:
      \begin{align*}
        \begin{aligned}
          G_{1}(\sigma_{1})b & = \hG_{1}(\sigma_{1})b,\\
          G_{2}(\sigma_{1}, \sigma_{2}) (I_{m} \otimes b) & = 
            \hG_{1}(\sigma_{1}) (I_{m} \otimes b),\\
          & \,\,\,\vdots \\
          G_{k}(\sigma_{1}, \ldots, \sigma_{k}) (I_{m^{k-1}} \otimes b) &
            = \hG_{k}(\sigma_{1}, \ldots, \sigma_{k}) (I_{m^{k-1}} \otimes b).
        \end{aligned}
      \end{align*}
    \item If $W$ is constructed as
      \begin{align*}
        W_{1} & = \cK(\varsigma_{\kappa})^{-\herm}\cC
          (\varsigma_{\kappa})^{\herm} c,\\
        W_{i} & = \cK(\varsigma_{\kappa-i+1})^{-\herm}
          \tcN(\varsigma_{k-i+1})^{\herm} (I_{m} \otimes W_{i-1}),
          & 2 \leq i \leq \kappa,\\
        \mathrm{span}(W) & \supseteq \mathrm{span}\left([W_{1}, \ldots,
          W_{\kappa}] \right),
      \end{align*}
      then the following interpolation conditions hold true:
      \begin{align*}
        \begin{aligned}
          c^{\herm} G_{1}(\varsigma_{\kappa}) &
            = c^{\herm} \hG_{1}(\varsigma_{\kappa}),\\
          c^{\herm} G_{2}(\varsigma_{\kappa-1}, \varsigma_{\kappa}) &
            = c^{\herm} \hG_{2}(\varsigma_{\kappa-1}, \varsigma_{\kappa}),\\
          & \,\,\,\vdots \\
          c^{\herm} G_{\kappa}(\varsigma_{1}, \ldots, \varsigma_{\kappa}) &
            = c^{\herm} \hG_{\kappa}(\varsigma_{1}, \ldots, \varsigma_{\kappa}).
        \end{aligned}
      \end{align*}
    \item Let $V$ be constructed as in Part~(a) and $W$ as in Part~(b).
      Then, additionally to the results in~(a) and~(b), the following conditions
      hold:
      \begin{align*}
        & c^{\herm} G_{q + \eta}(\sigma_{1}, \ldots, \sigma_{q},
          \varsigma_{\kappa-\eta+1}, \ldots, \varsigma_{\kappa})
          (I_{m^{q+\eta-1}} \otimes b)\\
        & \qquad = c^{\herm} \hG_{q + \eta}(\sigma_{1}, \ldots, \sigma_{q},
          \varsigma_{\kappa-\eta+1}, \ldots, \varsigma_{\kappa})
          (I_{m^{q+\eta-1}} \otimes b),
      \end{align*}
      for $1 \leq q \leq k$ and  $1 \leq \eta \leq \kappa$.
  \end{enumerate}
\end{corollary}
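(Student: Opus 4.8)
The plan is to deduce \Cref{cor:tangblock} from \Cref{thm:tangmod} together with the block decomposition in~\cref{eqn:tfblocks} and~\cref{eqn:blockmod}: we run the construction of \Cref{thm:tangmod} once for every choice of scaling vectors drawn from the columns $e_{1}, \ldots, e_{m}$ of the $m$-dimensional identity matrix, and then concatenate the resulting projection bases and interpolation conditions.

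First I would unpack the recursion defining $V$ in Part~(a). Because $\cN(\sigma_{j-1})(I_{m} \otimes V_{j-1}) = [\,\cN_{1}(\sigma_{j-1}) V_{j-1}, \ldots, \cN_{m}(\sigma_{j-1}) V_{j-1}\,]$ and $\mN{\sigma_{j-1}}{e_{i}} = \cN_{i}(\sigma_{j-1})$, every column of $V_{j}$ has the form $\cK(\sigma_{j})^{-1} \mN{\sigma_{j-1}}{e_{i}} v$ for some column $v$ of $V_{j-1}$. An induction on $j$ then shows that the columns of $V_{j}$ are exactly the vectors $v_{j}$ of \Cref{thm:tangmod}~(a) as the scaling vectors $d^{(1)}, \ldots, d^{(j-1)}$ range over all tuples of identity columns. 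Hence, for each fixed such tuple, the whole chain $v_{1}, \ldots, v_{k}$ lies in $\mspan(V)$, so \Cref{thm:tangmod}~(a) applies and gives $\mG{j}{\sigma_{1}, \ldots, \sigma_{j}}{e_{i_{1}}, \ldots, e_{i_{j-1}}} b = \hmG{j}{\sigma_{1}, \ldots, \sigma_{j}}{e_{i_{1}}, \ldots, e_{i_{j-1}}} b$ for every multi-index $(i_{1}, \ldots, i_{j-1})$. Concatenating these columns in the order prescribed by~\cref{eqn:blockmod} yields exactly $G_{j}(\sigma_{1}, \ldots, \sigma_{j})(I_{m^{j-1}} \otimes b) = \hG_{j}(\sigma_{1}, \ldots, \sigma_{j})(I_{m^{j-1}} \otimes b)$, which is Part~(a).

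Part~(b) follows by the same scheme applied to the left construction; the crucial point is that the \emph{vertical} concatenation $\tcN$ appears here rather than the horizontal $\cN$, since $\tcN(s)^{\herm} = [\,\cN_{1}(s)^{\herm}, \ldots, \cN_{m}(s)^{\herm}\,]$, so that the columns of $W_{i} = \cK(\varsigma_{\kappa-i+1})^{-\herm} \tcN(\varsigma_{\kappa-i+1})^{\herm} (I_{m} \otimes W_{i-1})$ are of the form $\cK(\varsigma_{\kappa-i+1})^{-\herm} \mN{\varsigma_{\kappa-i+1}}{e_{j}}^{\herm} w$ with $w$ a column of $W_{i-1}$, matching the transposed recursion for $w_{i}$ in \Cref{thm:tangmod}~(b) with $\delta^{(\kappa-i+1)} = e_{j}$. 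Inductively the columns of $W_{i}$ enumerate the $w_{i}$ over all identity-column tuples; \Cref{thm:tangmod}~(b) then gives the left modified interpolation condition for each tuple, and by~\cref{eqn:blockmod} these assemble, as the tuple varies, into $c^{\herm} G_{i}(\varsigma_{\kappa-i+1}, \ldots, \varsigma_{\kappa}) = c^{\herm} \hG_{i}(\varsigma_{\kappa-i+1}, \ldots, \varsigma_{\kappa})$. For Part~(c) I would invoke \Cref{thm:tangmod}~(c) with all of $d^{(1)}, \ldots, d^{(q-1)}$, the central scaling vector $z$, and $\delta^{(\kappa-\eta+1)}, \ldots, \delta^{(\kappa-1)}$ specialized to identity columns---legitimate because $\mspan(V)$ and $\mspan(W)$ contain the required chains for every such choice---and then concatenate over all $(q+\eta-1)$-tuples of identity columns, which via~\cref{eqn:blockmod} recovers the factor $I_{m^{q+\eta-1}} \otimes b$ in the stated bi-tangential identity.

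The only step requiring genuine care is the induction identifying the columns of $V_{j}$ (and of $W_{i}$) with the \Cref{thm:tangmod} vectors over all identity-column scaling tuples and, for the matrix-valued conclusions, checking that the order in which the nested Kronecker recursion enumerates these columns matches the order of the blocks in~\cref{eqn:tfblocks} and~\cref{eqn:blockmod}, so that the concatenated conditions line up block for block. Getting the horizontal-versus-vertical concatenation right ($\cN$ for the trial space $V$, $\tcN$ for the test space $W$) is the subtle ingredient; once it is settled, everything else is immediate from \Cref{thm:tangmod} and the definitions, which is why the corollary is stated without a separate proof.
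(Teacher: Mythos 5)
Your proposal is correct and follows exactly the route the paper intends: the corollary is obtained by specializing the scaling vectors in \Cref{thm:tangmod} to columns of the $m$-dimensional identity and concatenating the resulting modified transfer functions according to~\cref{eqn:blockmod}, which is precisely your argument. The induction identifying the columns of $V_{j}$ (via the horizontal concatenation $\cN$) and of $W_{i}$ (via the vertical concatenation $\tcN$) with the chains of \Cref{thm:tangmod} over all identity-column tuples is the detail the paper leaves implicit when it states that the proof ``directly follows from the previous section and by concatenation.''
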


The next corollary corresponds to \Cref{thm:tangmodhermite} stating the results
for Hermite interpolation.

\begin{corollary}[Structured blockwise tangential Hermite interpolation]%
  \label{cor:tangblockhermite}
  Let $G$ be a bilinear system, described by its subsystem transfer functions
  in~\cref{eqn:tf}, and $\hG$ the reduced-order bilinear system, constructed
  by~\cref{eqn:proj} with the corresponding subsystem transfer functions
  $\hG_{k}$.
  Given sets of interpolation points $\sigma_{1}, \ldots, \sigma_{k} \in \C$ and
  $\varsigma_{1}, \ldots, \varsigma_{\kappa} \in \C$ such that the matrix 
  functions $\cC(s)$, $\cK(s)^{-1}$, $\cN(s)$, $\cB(s)$, $\hcK(s)^{-1}$ are
  analytic in $s \in \{ \sigma_{1}, \ldots, \sigma_{k}, \varsigma_{1}, \ldots,
  \varsigma_{\kappa} \}$, and two tangential directions $b \in \C^{m}$ 
  and $c \in \C^{p}$, the following statements hold:
  \begin{enumerate}[label=(\alph*)]
    \item If $V$ is constructed as
      \begingroup \small
      \begin{align*}
        V_{1, j_{1}} & = \partial_{s^{j_{1}}} (\cK^{-1} \cB b) (\sigma_{1}),
          & j_{1} & = 0, \ldots, \ell_{1},\\
        V_{2, j_{2}} & = \partial_{s^{j_{2}}} \cK^{-1} (\sigma_{2})
          \partial_{s^{\ell_{1}}} (\cN (I_{m} \otimes \cK^{-1} \cB b))
          (\sigma_{1}),
          & j_{2} & = 0, \ldots, \ell_{2},\\
        & \,\,\,\vdots\\
        V_{k,j_{k}} & = \partial_{s^{j_{k}}} \cK^{-1} (\sigma_{k})
          \left( \prod\limits_{j = 1}^{k-2} \partial_{s^{\ell_{k -j}}}
          ( (I_{m^{j-1}} \otimes \cN) (I_{m^{j}} \otimes \cK) )
          (\sigma_{k-j}) \right)\\
        & \quad{}\times{} \partial_{s^{\ell_{1}}}
          ((I_{m^{k-2}} \otimes \cN)(I_{m^{k-1}} \otimes \cK \cB b))
          (\sigma_{1}),
          & j_{k} & = 0, \ldots, \ell_{k},\\
        \mspan(V) & \supseteq \mspan([V_{1,0}, \ldots, V_{k, \ell_{k}}]),
      \end{align*}
      \endgroup
      then the following interpolation conditions hold true:
      \begin{align*}
        & \partial_{s_{1}^{j_{1}}} G_{1} (\sigma_{1}) b
          = \partial_{s_{1}^{j_{1}}} \hG_{1} (\sigma_{1}) b,
          & j_{1} & = 0, \ldots, \ell_{1}, \\
        & \hspace*{5.5em} \vdots \\
        & \partial_{s_{1}^{\ell_{1}} \cdots s_{k-1}^{\ell_{k-1}} s_{k}^{j_{k}}}
          G_{k} (\sigma_{1}, \ldots, \sigma_{k}) (I_{m^{k-1}} \otimes b) \\
          &\qquad = \partial_{s_{1}^{\ell_{1}} \cdots s_{k-1}^{\ell_{k-1}}
          s_{k}^{j_{k}}} \hG_{k} (\sigma_{1}, \ldots, \sigma_{k})
          (I_{m^{k-1}} \otimes b),
          & j_{k} & = 0, \ldots, \ell_{k}.
      \end{align*}
    \item If $W$ is constructed as
      \begin{align*}
        W_{1, i_{\kappa}} & = \partial_{s^{i_{\kappa}}} (\cK^{-\herm}
          \cC^{\herm} c) (\varsigma_{\kappa}),
          & i_{\kappa} & = 0,\ldots,\nu_{\kappa},\\
        W_{2,i_{\kappa-1}} & = \partial_{s^{i_{\kappa-1}}} (\cK^{-\herm}
          \tcN^{\herm}) (\varsigma_{\kappa-1})
          \left( I_{m} \otimes \partial_{s^{\nu_{\kappa}}} (\cK^{-\herm}
          \cC^{\herm} c) (\varsigma_{\kappa}) \right),
        & i_{\kappa-1} & = 0,\ldots,\nu_{\kappa-1},\\
        & \,\,\,\vdots\\
        W_{\kappa,i_{1}} & = \partial_{s^{i_{1}}} (\cK^{-\herm} \tcN^{\herm})
          (\varsigma_{1}) \left( \prod\limits_{i = 2}^{\kappa - 1}
          \partial_{s^{\nu_{i}}} (I_{m^{i-1}} \otimes \cK^{-\herm} \tcN^{\herm})
          (\varsigma_{i}) \right)\\
        & \quad{}\times{} \left( I_{m^{\kappa-1}} \otimes
          \partial_{s^{\nu_{\kappa}}} (\cK^{-\herm} \cC^{\herm} c)
          (\varsigma_{\kappa}) \right),
          & i_{1} & = 0,\ldots,\nu_{1},\\
        \mathrm{span}(W) & \supseteq \mspan([W_{1,0}, \ldots, 
          W_{\kappa, \nu_{\kappa}}]),
      \end{align*}
      then the following interpolation conditions hold true:
      \begin{align*}
        c^{\herm} \partial_{s_{1}^{i_{\kappa}}} G_{1} (\varsigma_{\kappa})
          & = c^{\herm} \partial_{s_{1}^{i_{\kappa}}} \hG_{1}
          (\varsigma_{\kappa}),
          & i_{\kappa} = 0,\ldots,\nu_{\kappa},\\
        & \,\,\,\vdots\\
        c^{\herm} \partial_{s_{1}^{i_{1}} s_{2}^{\nu_{2}} \cdots 
          s_{\kappa}^{\nu_{\kappa}}} G_{\kappa} (\varsigma_{1}, \ldots,
          \varsigma_{\kappa})
          & = c^{\herm} \partial_{s_{1}^{i_{1}} s_{2}^{\nu_{2}} \cdots 
          s_{\kappa}^{\nu_{\kappa}}} \hG_{\kappa} (\varsigma_{1}, \ldots,
          \varsigma_{\kappa}),
          & i_{1} = 0,\ldots,\nu_{1}.
      \end{align*}
    \item Let $V$ be constructed as in Part~(a) and $W$ as in Part~(b).
      Then, additionally to the interpolation conditions in~(a) and~(b), the 
      following conditions hold:
      \begin{align*}
        & c^{\herm} \partial_{s_{1}^{\ell_{1}} \cdots s_{q-1}^{\ell_{q-1}}
          s_{q}^{j_{q}} s_{q+1}^{i_{\kappa - \eta + 1}} 
          s_{q+2}^{\nu_{\kappa - \eta + 2}} \cdots s_{q + \eta}^{\nu_{\kappa}}}
          G_{q + \eta} (\sigma_{1}, \ldots, \sigma_{q},
          \varsigma_{\kappa - \eta + 1}, \ldots, \varsigma_{\kappa})
          (I_{m^{q+\eta-1}} \otimes b)\\
        & = c^{\herm} \partial_{s_{1}^{\ell_{1}} \cdots s_{q-1}^{\ell_{q-1}}
          s_{q}^{j_{q}} s_{q+1}^{i_{\kappa - \eta + 1}} 
          s_{q+2}^{\nu_{\kappa - \eta + 2}} \cdots s_{q + \eta}^{\nu_{\kappa}}}
          \hG_{q + \eta} (\sigma_{1}, \ldots, \sigma_{q},
          \varsigma_{\kappa - \eta + 1}, \ldots, \varsigma_{\kappa})
          (I_{m^{q+\eta-1}} \otimes b),
      \end{align*}
      for $j_{q} = 0, \ldots, \ell_{q}$; $i_{\kappa - \eta + 1} = 0, \ldots,
      \nu_{\kappa - \eta + 1}$; $1 \leq q \leq k$ and $1 \leq \eta \leq \kappa$.
  \end{enumerate}
\end{corollary}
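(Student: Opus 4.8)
The plan is to obtain \Cref{cor:tangblockhermite} directly from \Cref{thm:tangmodhermite} by specializing the scaling vectors to columns of the identity matrix $I_m$ and then concatenating the resulting interpolation conditions, exactly as sketched around \cref{eqn:blockmod}. First I would record the elementary identities $\mN{s}{e_i} = \cN(s)(e_i \otimes I_n) = \cN_i(s)$ and $\mN{s}{e_i}^{\herm} = \cN_i(s)^{\herm}$, the latter being the $i$-th block of $\tcN(s)^{\herm}$. With these, the modified transfer function $\mG{k}{s_1,\ldots,s_k}{e_{i_1},\ldots,e_{i_{k-1}}}$ is precisely the block of $G_k$ in \cref{eqn:tfblocks} indexed by the tuple $(i_1,\ldots,i_{k-1}) \in \{1,\ldots,m\}^{k-1}$, so that $G_k(s_1,\ldots,s_k)(I_{m^{k-1}}\otimes b)$ equals the column concatenation of the $p\times m$ blocks $\mG{k}{s_1,\ldots,s_k}{e_{i_1},\ldots,e_{i_{k-1}}}b$ over all tuples, with the identical decomposition valid for $\hG_k$ and $\hmGs_k$. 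Hence it suffices to enforce the tangential Hermite conditions of \Cref{thm:tangmodhermite} simultaneously for every tuple of identity-column scaling vectors.

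Second I would verify that the matrix $V$ (respectively $W$) in \Cref{cor:tangblockhermite} is exactly the column concatenation, over all tuples $(i_1,\ldots,i_{k-1})$, of the vectors $v_{k,j_k}$ (respectively $w_{\kappa,i_1}$) produced by Part~(a) (respectively~(b)) of \Cref{thm:tangmodhermite} with the choice $d^{(\ell)} = e_{i_\ell}$ (respectively $\delta^{(\ell)} = e_{i_\ell}$). The key observation is that left-multiplication by $\cN(s)(I_m\otimes X)$ stacks the matrices $\cN_1(s)X,\ldots,\cN_m(s)X$ horizontally, and likewise $\tcN(s)^{\herm}(I_m\otimes X)$ stacks $\cN_1(s)^{\herm}X,\ldots,\cN_m(s)^{\herm}X$; consequently the Kronecker chains $(I_{m^{j-1}}\otimes\cN)(I_{m^j}\otimes\cK^{-1})$ appearing in the corollary's block constructions generate precisely the product chains $\cN_{i_{k-1}}(\sigma_{k-1})\cK(\sigma_{k-1})^{-1}\cdots\cK(\sigma_1)^{-1}\cB(\sigma_1)b$ for every index choice, including the Leibniz-expanded versions with derivative orders $\ell_1,\ldots,\ell_k$ distributed as in \Cref{thm:tangmodhermite}(a). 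Thus $\mspan(V)$ contains every $v$-vector the theorem requires for every tuple, and symmetrically $\mspan(W)$ contains every $w$-vector.

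Third I would assemble the conclusion. For Parts~(a) and~(b), applying \Cref{thm:tangmodhermite}(a) (respectively~(b)) once per tuple gives $\partial_{s_1^{\ell_1}\cdots s_k^{j_k}}\mG{k}{\sigma_1,\ldots,\sigma_k}{e_{i_1},\ldots,e_{i_{k-1}}}b = \partial_{s_1^{\ell_1}\cdots s_k^{j_k}}\hmG{k}{\sigma_1,\ldots,\sigma_k}{e_{i_1},\ldots,e_{i_{k-1}}}b$ for all $j_k = 0,\ldots,\ell_k$ and all tuples; concatenating these over the tuples and invoking the block decomposition \cref{eqn:blockmod} (valid verbatim for $\hG_k$) yields the stated Hermite conditions for $G_k(I_{m^{k-1}}\otimes b)$ and $\hG_k(I_{m^{k-1}}\otimes b)$. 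Part~(c) follows the same way from \Cref{thm:tangmodhermite}(c): there the central scaling vector $z$ is arbitrary, so letting it additionally range over $e_1,\ldots,e_m$ while the remaining scaling vectors already built into $V$ and $W$ range over all their identity-column values, concatenation over all these choices reconstitutes the full blockwise factor $I_{m^{q+\eta-1}}\otimes b$ and produces the mixed-derivative bi-tangential conditions for $1 \leq q \leq k$ and $1 \leq \eta \leq \kappa$.

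The main obstacle is the bookkeeping in the second step: one must be scrupulous about the ordering of the Kronecker factors so that the block-concatenated constructions in the corollary really do enumerate the same product chains (and the same Leibniz expansions of their derivatives) as the per-tuple constructions of \Cref{thm:tangmodhermite}, and that the concatenation order of the resulting interpolation conditions matches the column order of $I_{m^{k-1}}\otimes b$. Everything else is a direct specialization of already-proven results together with the linearity of the block decomposition \cref{eqn:blockmod}, so for brevity one may legitimately omit the routine index-chasing and simply refer to the previous section, as indicated above.
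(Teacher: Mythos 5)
Your proposal is correct and follows essentially the same route as the paper: the paper itself disposes of \Cref{cor:tangblockhermite} by noting that the proof ``directly follow[s] from the previous section and by concatenation,'' i.e.\ by specializing the scaling vectors in \Cref{thm:tangmodhermite} to the identity columns $e_{i_1},\ldots,e_{i_{k-1}}$ and reassembling the blocks via \cref{eqn:blockmod}, which is exactly your three-step argument. Your explicit verification that the Kronecker-chain constructions of $V$ and $W$ in the corollary enumerate precisely the per-tuple vectors of the theorem is the bookkeeping the paper leaves implicit, and it is carried out correctly.
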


Last, we give the results on implicit blockwise tangential interpolation of
additional partial derivatives by using two-sided projection, corresponding
to \Cref{thm:tangmodtwosided}.

\begin{corollary}[Structured blockwise bi-tangential interpolation with 
  identical point sets]%
  \label{cor:tangblocktwosided}
  Let $G$ be a bilinear system, described by its subsystem transfer functions
  in~\cref{eqn:tf}, and $\hG$ the reduced-order bilinear system, constructed
  by~\cref{eqn:proj} with the corresponding subsystem transfer functions
  $\hG_{k}$.
  Given a set of interpolation points $\sigma_{1}, \ldots, \sigma_{k} \in \C$
  such that the matrix functions $\cC(s)$, $\cK(s)^{-1}$, $\cN(s)$, $\cB(s)$,
  $\hcK(s)^{-1}$ are analytic in $s \in \{ \sigma_{1}, \ldots, \sigma_{k} \}$,
  and two tangential directions $b \in \C^{m}$ and $c \in \C^{p}$, the
  following statements hold:
  \begin{enumerate}[label=(\alph*)]
    \item Let $V$ and $W$ be constructed as in \Cref{cor:tangblock} Parts~(a)
      and~(b) for the interpolation points $\sigma_{1} = \varsigma_{1}$,
      $\ldots$, $\sigma_{k} = \varsigma_{k}$.
      Then, in addition to the interpolation conditions in \Cref{cor:tangblock},
      it holds
      \begin{align*}
        & \nabla  \big( c^{\herm}  G_{k} (I_{m^{k-1}} \otimes b) \big)
          (\sigma_{1}, \ldots, \sigma_{k})
         = \nabla \big( c^{\herm} \hG_{k} (I_{m^{k-1}} \otimes b) \big)
          (\sigma_{1}, \ldots, \sigma_{k}).
      \end{align*}
    \item Let $V$ and $W$ be constructed as in
      \Cref{cor:tangblockhermite} Parts~(a) and~(b) for the interpolation
      points $\sigma_{1} = \varsigma_{1}$, $\ldots$,
      $\sigma_{k} = \varsigma_{k}$ and derivative orders $\ell_{1} = \nu_{1}$,
      $\ldots$, $\ell_{k} = \nu_{k}$.
      Then, in addition to the interpolation conditions in
      \Cref{cor:tangblockhermite}, it holds
      \begin{align*}
        & \nabla \left( c^{\herm} \partial_{s_{1}^{\ell_{1}} \cdots
          s_{k}^{\ell_{k}}} G_{k} (I_{m^{k-1}} \otimes b) \right)
          (\sigma_{1}, \ldots, \sigma_{k}) \\
        & \qquad = \nabla \left( c^{\herm} \partial_{s_{1}^{\ell_{1}} \cdots
          s_{k}^{\ell_{k}}} \hG_{k} (I_{m^{k-1}} \otimes b) \right)
          (\sigma_{1}, \ldots, \sigma_{k}).
      \end{align*}
  \end{enumerate}
\end{corollary}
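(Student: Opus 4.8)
The plan is to obtain Corollary~\ref{cor:tangblocktwosided} as a blockwise concatenation of Theorem~\ref{thm:tangmodtwosided}, in exactly the way Corollaries~\ref{cor:tangblock} and~\ref{cor:tangblockhermite} are obtained from Theorems~\ref{thm:tangmod} and~\ref{thm:tangmodhermite}. The starting point is the block decomposition~\cref{eqn:blockmod}: for every multi-index $(i_{1}, \ldots, i_{k-1}) \in \{1, \ldots, m\}^{k-1}$, the corresponding block column of $G_{k}(s_{1}, \ldots, s_{k}) (I_{m^{k-1}} \otimes b)$ equals $\mG{k}{s_{1}, \ldots, s_{k}}{e_{i_{1}}, \ldots, e_{i_{k-1}}} b$, where $e_{i}$ is the $i$-th column of $I_{m}$, and the same identity holds for the reduced quantities $\hG_{k}$ and $\hmGs_{k}$ built by~\cref{eqn:proj}. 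Since differentiation commutes with column concatenation, it therefore suffices, for Part~(a), to show for each fixed multi-index and each $\ell \in \{1, \ldots, k\}$ that $\partial_{s_{\ell}} \bigl( c^{\herm} \mGs_{k} b \bigr)\margs{\sigma_{1}, \ldots, \sigma_{k}}{e_{i_{1}}, \ldots, e_{i_{k-1}}}$ is reproduced by $\hmGs_{k}$, and then to concatenate over all $\ell$ and all multi-indices.

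The step that makes this work --- and the one I would spell out carefully --- is the claim that the blockwise bases of Corollary~\ref{cor:tangblock} Parts~(a) and~(b) contain, for every choice of scaling vectors ranging over the columns of $I_{m}$, the modified bases $v_{1}, \ldots, v_{k}$ and $w_{1}, \ldots, w_{k}$ of Theorem~\ref{thm:tangmod} Parts~(a) and~(b). This is a short induction on the recursion level: expanding $V_{j} = \cK(\sigma_{j})^{-1} \cN(\sigma_{j-1}) (I_{m} \otimes V_{j-1})$ shows that the columns of $V_{j}$ are precisely the vectors $\cK(\sigma_{j})^{-1} \cN_{i_{j-1}}(\sigma_{j-1}) \cdot (\text{column of } V_{j-1})$ with $1 \leq i_{j-1} \leq m$, and since $\mN{\sigma_{j-1}}{e_{i_{j-1}}} = \cN_{i_{j-1}}(\sigma_{j-1})$ this matches the recursion $v_{j} = \cK(\sigma_{j})^{-1} \mN{\sigma_{j-1}}{e_{i_{j-1}}} v_{j-1}$; the same computation with $\tcN^{\herm} = \begin{bmatrix} \cN_{1}^{\herm} & \cdots & \cN_{m}^{\herm} \end{bmatrix}$ handles $W$. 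Hence, for each fixed multi-index, the blockwise $V$ and $W$ simultaneously satisfy the hypotheses of Theorem~\ref{thm:tangmod} Parts~(a) and~(b), and with $\varsigma_{j} = \sigma_{j}$ and the common scaling vectors $e_{i_{j}}$ they satisfy the hypotheses of Theorem~\ref{thm:tangmodtwosided} Part~(a). That theorem then gives $\nabla \bigl( c^{\herm} \mGs_{k} b \bigr)\margs{\sigma_{1}, \ldots, \sigma_{k}}{e_{i_{1}}, \ldots, e_{i_{k-1}}} = \nabla \bigl( c^{\herm} \hmGs_{k} b \bigr)\margs{\sigma_{1}, \ldots, \sigma_{k}}{e_{i_{1}}, \ldots, e_{i_{k-1}}}$; keeping only the components $\partial_{s_{1}}, \ldots, \partial_{s_{k}}$ (the derivatives with respect to the scaling vectors that also appear in $\nabla \mGs_{k}$ are simply not needed here, which is why this corollary is lighter than Theorem~\ref{thm:tangmodtwosided}) and concatenating over all multi-indices proves Part~(a).

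For Part~(b) I would run the identical argument one level up, replacing Corollary~\ref{cor:tangblock} by Corollary~\ref{cor:tangblockhermite}, Theorem~\ref{thm:tangmod} by Theorem~\ref{thm:tangmodhermite}, and Theorem~\ref{thm:tangmodtwosided} Part~(a) by Part~(b). The containment claim becomes: the Hermite blockwise bases $V_{j, j_{j}}$, $W_{i, i_{i}}$ contain the modified Hermite bases $v_{j, j_{j}}$, $w_{i, i_{i}}$ of Theorem~\ref{thm:tangmodhermite} for every scaling-vector tuple made of columns of $I_{m}$ and with matching derivative orders. It is proven by the same column-matching induction, now additionally carrying the fixed derivative orders that are propagated identically through the two recursions. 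Applying Theorem~\ref{thm:tangmodtwosided} Part~(b) block by block and concatenating the frequency-derivative components gives Part~(b).

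The only genuine difficulty is bookkeeping of the index conventions, and this is where essentially all the work of a fully written-out proof would sit. One must check that the enumeration of the block columns in~\cref{eqn:blockmod} matches the enumeration of the scaling-vector tuples $(e_{i_{1}}, \ldots, e_{i_{k-1}})$; that the reversed indexing $\delta^{(\kappa-i+1)}$ of the scaling vectors in the $W$-recursion of Theorem~\ref{thm:tangmod} Part~(b) lines up with the $\tcN$-based block structure of the $W$-recursion in Corollary~\ref{cor:tangblock} Part~(b); and, in the Hermite case, that the derivative order attached to each individual frequency argument in the two-sided conditions of Theorem~\ref{thm:tangmodtwosided} Part~(b) agrees on both sides after the identification $\sigma_{j} = \varsigma_{j}$, $\ell_{j} = \nu_{j}$. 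None of this is conceptually hard; all the analytic content is already contained in Theorem~\ref{thm:tangmodtwosided}, which is precisely why the statement follows by concatenation.
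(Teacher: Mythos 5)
Your proposal is correct and follows essentially the same route as the paper, which obtains \Cref{cor:tangblocktwosided} (like \Cref{cor:tangblock,cor:tangblockhermite}) from \Cref{thm:tangmodtwosided} via the block decomposition~\cref{eqn:blockmod} and concatenation over the scaling-vector tuples drawn from the columns of $I_{m}$. The paper leaves this concatenation argument implicit (``the proofs directly follow from the previous section and by concatenation''); your write-up simply makes explicit the key containment of the modified bases in the blockwise bases, which is exactly the step that needs verifying.
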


\begin{remark}[Projection space dimensions]
  It will be useful to understand the growth of the size of the model reduction
  bases and thus the order of the resulting interpolatory reduced-order model
  for the different interpolation approaches.
  Let $n_{\rm{s}}$ be the number of sets of interpolation points and tangential 
  directions at which we want to enforce interpolation.
  Also, assume w.l.o.g.\ the recursively generated columns in $V$ and $W$ are
  all linearly independent (since otherwise, the dimensions of the corresponding
  projection spaces can be reduced while still enforcing interpolation).
  Then, for the matrix interpolation approach from~\cite[Thm.~8]{BenGW21a},
  we obtain
  \begin{align} \label{eqn:dimMtx}
    \begin{aligned}
      \dim(\mspan(V_{\rm{mtx}})) & \geq n_{\rm{s}}
        \left( \sum_{j = 1}^{k} m^{k} \right) &
        \text{and} && \dim(\mspan(W_{\rm{mtx}})) & \geq n_{\rm{s}}
        \left( \sum_{j = 1}^{k} p m^{k-1} \right)
    \end{aligned}
  \end{align}
  for the right and left projection spaces, respectively.
  The blockwise tangential approach from \Cref{cor:tangblock} reduces those 
  dimensions to
  \begin{align} \label{eqn:dimBwt}
    \dim(\mspan(V_{\rm{bwt}})) = \dim(\mspan(W_{\rm{bwt}})) &
      \geq n_{\rm{s}} \left( \sum_{j = 1}^{k} m^{k-1} \right).
  \end{align}
  Comparing~\cref{eqn:dimMtx} and~\cref{eqn:dimBwt} shows that 
  the blockwise tangential interpolation approach, similar to matrix
  interpolation, has exponentially growing dimensions of the projection spaces.
  In contrast, the new modified tangential interpolation approach as in 
  \Cref{thm:tangmod} yields
  \begin{align*}
    \dim(\mspan(V_{\rm{st}})) = \dim(\mspan(W_{\rm{st}})) &
      \geq n_{\rm{s}} k,
  \end{align*}
  which now grows only linearly.
  This gives more freedom in the choice of the order of interpolating
  reduced-order models, as well as more possibilities to adapt the choice of
  interpolation points to the problem.
\end{remark}

%%%%%%%%%%%%%%%%%%%%%%%%%%%%%%%%%%%%%%%%%%%%%%%%%%%%%%%%%%%%%%%%%%%%%%%%%%%%%%%%
% EXAMPLES.                                                                    %
%%%%%%%%%%%%%%%%%%%%%%%%%%%%%%%%%%%%%%%%%%%%%%%%%%%%%%%%%%%%%%%%%%%%%%%%%%%%%%%%

\section{Numerical examples}
\label{sec:examples}

In this section, we will compare different structure-preserving interpolation
frameworks.
We compute reduced-order models by:
\begin{description}
  \item[MtxInt] the structure-preserving matrix interpolation
    from~\cite{BenGW21a},
  \item[BwtInt] the structure-preserving blockwise tangential interpolation as
    in \Cref{subsec:tangblock},
  \item[SftInt] the modified structure-preserving tangential interpolation
    framework motivated in the frequency domain (\Cref{subsec:tangfreq}), and
  \item[SttInt] the generalized structure-preserving tangential interpolation
    framework motivated in the time domain (\Cref{subsec:tangtime}).
\end{description}
In the experiments, we use  MATLAB notation to define the
interpolation points:
We write \texttt{logspace(a, b, k)} to denote $k$ logarithmically equidistant
points in the interval~$[10^{a}, 10^{b}]$.

For the qualitative analysis of the computed reduced-order models, we will 
consider approximation errors in time and frequency domains.
In time domain, we consider the point-wise relative output error for a given
input signal, i.e.,
\begin{align*}
  \frac{\lVert y(t) - \hy(t) \rVert_{2}}{\lVert y(t) \rVert_{2}},
\end{align*}
where $y$ and $\hy$ denote the original and reduced-order system outputs,
respectively, in the time range $t \in [0, t_{\mathrm{f}}]$.
Additionally, we compute the maximum error over time by
\begin{align*}
  \err_{\rm{sim}} & := \max\limits_{t \in [0, t_{\rm{f}}]}\frac{\lVert y(t) - 
    \hy(t) \rVert_{2}}{\lVert y(t) \rVert_{2}}.
\end{align*}
In frequency domain, the point-wise relative error of the first and second
transfer functions on the imaginary axis in the spectral norm is considered, 
i.e.,
\begin{align*}
  \begin{aligned}
    \frac{\lVert G_{1}(\i\,\omega_{1}) - \hG_{1}(\i\,\omega_{1}) \rVert_{2}}
      {\lVert G_{1}(\i\,\omega_{1}) \rVert_{2}} && \text{and} &&
    \frac{\lVert G_{2}(\i\,\omega_{1}, \i\,\omega_{2}) -
      \hG_{2}(\i\,\omega_{1}, \i\,\omega_{2}) \rVert_{2}}
      {\lVert G_{2}(\i\,\omega_{1}, \i\,\omega_{2}) \rVert_{2}},
  \end{aligned}
\end{align*}
in the frequency range $\omega_{1}, \omega_{2} \in [\omega_{\min}, 
\omega_{\max}]$
together with the corresponding  maximum errors over the 
frequency of interest defined as
\begin{align*}
  \err_{\rm{G_{1}}} & := \max\limits_{\omega_{1} \in [\omega_{\min}, 
    \omega_{\max}]} \frac{\lVert G_{1}(\i\,\omega_{1}) - \hG_{1}(\i\,\omega_{1})
    \rVert_{2}}{\lVert G_{1}(\i\,\omega_{1}) \rVert_{2}}, \\
  \err_{\rm{G_{2}}} & := \max\limits_{\omega_{1}, \omega_{2} \in [\omega_{\min},
    \omega_{\max}]} \frac{\lVert G_{2}(\i\,\omega_{1}, \i\,\omega_{2}) -
    \hG_{2}(\i\,\omega_{1}, \i\,\omega_{2}) \rVert_{2}}{\lVert G_{2}
    (\i\,\omega_{1}, \i\,\omega_{2}) \rVert_{2}}.
\end{align*}
Note that the time and frequency domain errors reported are actually
approximated by evaluating the above expressions on a fine grid covering
$[0,t_{\mathrm{f}}]$ or $[\omega_{\min},\omega_{\max} ]$, respectively. 

The experiments reported here have been executed on machines with 2 Intel(R)
Xeon(R) Silver 4110 CPU processors running at 2.10\,GHz and equipped with
either 192\,GB or 384\,GB total main memory.
The computers run on CentOS Linux release 7.5.1804 (Core) with
MATLAB 9.9.0.1467703 (R2020b).
The source code, data and results of the numerical experiments are
open source/open access and available at~\cite{supWer22c}.

%%%%%%%%%%%%%%%%%%%%%%%%%%%%%%%%%%%%%%%%%%%%%%%%%%%%%%%%%%%%%%%%%%%%%%%%%%%%%%%%

\subsection{Cooling of steel profiles}

\begin{figure}[t]
  \centering
  \begin{subfigure}[b]{.49\textwidth}
    \centering
  \tikzexternalenable%
  \tikzsetnextfilename{rail_time_sim}%
  \begin{tikzpicture}
  \pgfplotstableread{graphics/data/rail_time_sim.dat}\tableSIM
  
  \begin{axis}[%
    width  = .7\textwidth,
    height = .4\textwidth,
    scale only axis,
    xmin = 0,
    xmax = 4500,
    ymin = -0.017,
    ymax = 0.012,
    xminorticks = false,
    yminorticks = false,
    xlabel = {Time $t$},
    ylabel = {Amplitude},
    ylabel style   = {yshift = -.3em},
    scaled x ticks = false,
    x tick label style = {/pgf/number format/1000 sep={\,}},
    cycle list name    = plotlist]
    
    \foreach \p in {1, 2, ..., 6}{
      \foreach[evaluate=\y as \res using int(\y + \p)] \y in {0, 6, ..., 24} {
        \addplot table[x index = 0, y index = \res,
          each nth point = 10,
          filter discard warning=false,
          unbounded coords=discard] {\tableSIM};
      }
    }
  \end{axis}
\end{tikzpicture}%
  \tikzexternaldisable%

    \subcaption{Time response.}
  \end{subfigure}
  \hfill
  \begin{subfigure}[b]{.49\textwidth}
    \centering
  \tikzexternalenable%
  \tikzsetnextfilename{rail_time_err}%
  \begin{tikzpicture}
  \pgfplotstableread{graphics/data/rail_time_err.dat}\tableRELERR
  
  \begin{semilogyaxis}[%
    width  = .7\textwidth,
    height = .4\textwidth,
    scale only axis,
    xmin = 0,
    xmax = 4500,
    ymin = 1e-6,
    ymax = 5e-1,
    xminorticks = false,
    yminorticks = false,
    xlabel = {Time $t$},
    ylabel = {Magnitude},
    ylabel style = {yshift = -.3em},
    scaled x ticks = false,
    x tick label style = {/pgf/number format/1000 sep={\,}},
    cycle list name    = plotlist]
        
    \pgfplotsset{cycle list shift = 1}
    \foreach \y in {1, 2, 3, 4} {
      \addplot table[x index = 0, y index = \y,
        each nth point = 10,
        filter discard warning=false,
        unbounded coords=discard] {\tableRELERR};
    }
  \end{semilogyaxis}
\end{tikzpicture}%
  \tikzexternaldisable%

    \subcaption{Relative errors.}
  \end{subfigure}
  \vspace{.5\baselineskip}

  \tikzexternalenable%
  \tikzsetnextfilename{rail_time_legend}%
  \begin{tikzpicture}  
  \begin{axis}[%
    hide axis,
    scale only axis,
    width = 1mm,
    legend columns = 5, 
    legend style = {
      at     = {(0,0)},
      anchor = center,
      /tikz/every even column/.append style = {column sep = 0.5cm}},
    cycle list name = plotlist]
    
    \pgfplotsinvokeforeach{1,...,5}{\addplot coordinates {(0,0)};}
        
    \addlegendentry{Original system};
    \addlegendentry{\mtxint};
    \addlegendentry{\bwtint};
    \addlegendentry{\sftint};
    \addlegendentry{\sttint};
  \end{axis}
\end{tikzpicture}%
  \tikzexternaldisable%

  \caption{Time domain simulation results for the steel profile.}
  \label{fig:rail_time}
\end{figure}
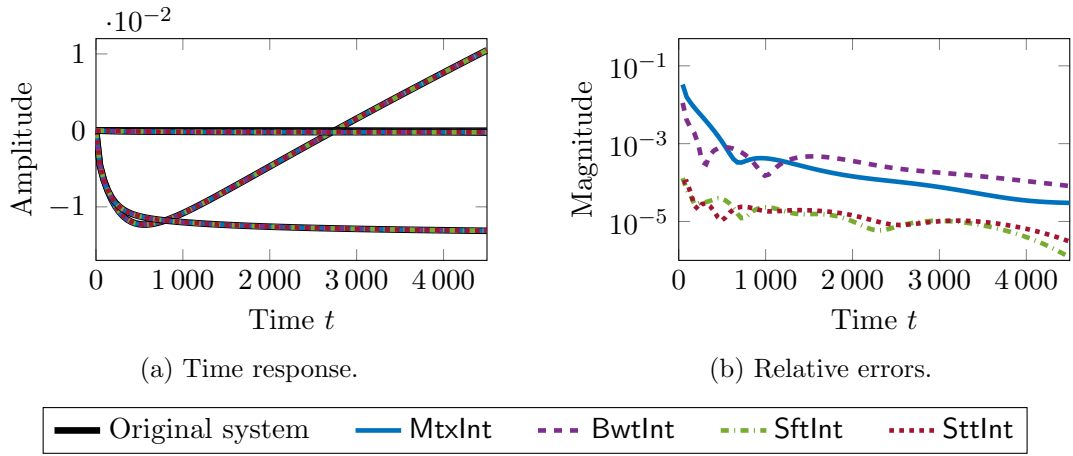

We first consider a classical, unstructured bilinear system 
as in~\cref{eqn:bsys}.
For the optimal cooling of steel profiles, the heat transfer process is 
described by the two dimensional heat equation
\begin{align*}
  c \rho \partial_{t} v(t, \zeta) - \lambda \Delta v(t, \zeta) & = 0,\\
  v(0, \zeta) & = v_{0}(\zeta),
\end{align*}
with $(t, \zeta) \in (0, t_{\mathrm{f}}) \times \Omega$, the initial value $v_{0}(\zeta) 
\in \Omega$, and the Robin boundary conditions
\begin{align*}
  \lambda \partial_{\nu} v(t, \zeta) & = 
    \left\{ \begin{aligned}
      q_{i} u_{i}(t) \big(1 - v(t, \zeta)\big), &&& \text{on}~\Gamma_{i},
        i = 1, \ldots, 6,\\
      q_{7} \big(u_{7}(t) - v(t, \zeta)\big), &&& \text{on}~\Gamma_{7},
    \end{aligned} \right.
\end{align*}
such that $\bigcup_{i = 1}^{7} \Gamma_{i} = \partial \Omega$ and $\Gamma_{i} 
\cap \Gamma_{j} = \emptyset$ for $i \neq j$, where $\partial_{\nu}$ denotes the 
derivative in direction of the outer normal $\nu$ and $u_{i}(t)$ are the 
exterior cooling fluid temperatures used as controls.
The spatial discretization of the rail shaped domain and parameters are chosen 
as described in~\cite{morwiki_steel, Saa03}.
As a result, we consider a system of structure~\cref{eqn:bsys} with
$n = 5\,054\,209$ states, $m = 7$ inputs, non-zero bilinear terms 
corresponding to the first $6$ inputs, and $p = 6$ outputs.
The data for this example is available in~\cite{SaaKB21}.

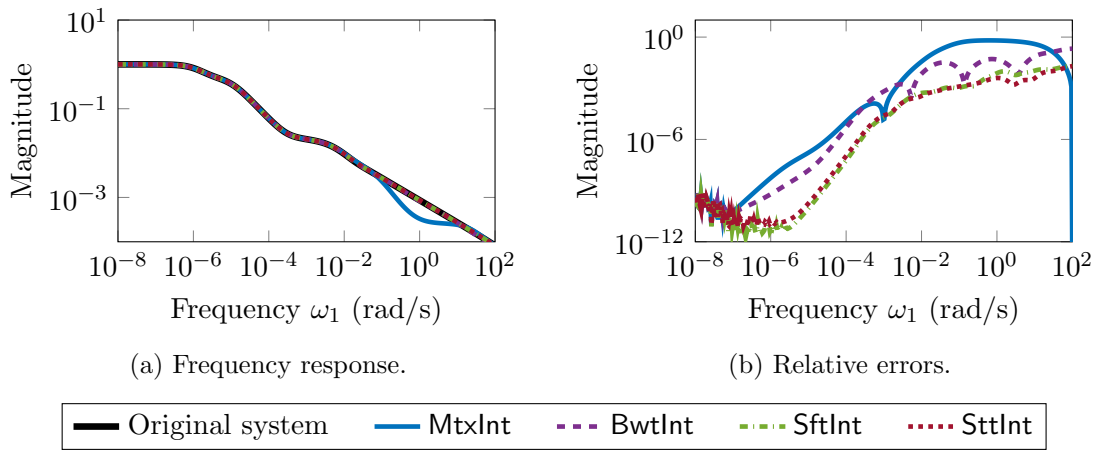
\begin{figure}[t]
  \centering
  \begin{subfigure}[b]{.49\textwidth}
    \centering
  \tikzexternalenable%
  \tikzsetnextfilename{rail_freq_g1_tf}%
  \begin{tikzpicture}
  \pgfplotstableread{graphics/data/rail_freq_g1_tf.dat}\tableTF
  
  \begin{loglogaxis}[%
    width  = .675\textwidth,
    height = .4\textwidth,
    scale only axis,
    xmin = 1e-8,
    xmax = 1e+2,
    xtick = {1e-8, 1e-6, 1e-4, 1e-2, 1e+0, 1e+2},
    ymin = 1e-4,
    ymax = 1e+1,
    xminorticks = false,
    yminorticks = false,
    xlabel = {Frequency $\omega_{1}$ (rad/s)},
    ylabel = {Magnitude},
    ylabel style   = {yshift = -.3em},
    scaled x ticks = false,
    x tick label style = {/pgf/number format/fixed},
    cycle list name    = plotlist]
    
    \foreach \y in {1, 2, ..., 5} {
      \addplot table[x index = 0, y index = \y] {\tableTF};
    }
  \end{loglogaxis}
\end{tikzpicture}%
  \tikzexternaldisable%

    \subcaption{Frequency response.}
  \end{subfigure}
  \hfill
  \begin{subfigure}[b]{.49\textwidth}
    \centering
  \tikzexternalenable%
  \tikzsetnextfilename{rail_freq_g1_err}%
  \begin{tikzpicture}
  \pgfplotstableread{graphics/data/rail_freq_g1_err.dat}\tableRELERR
  
  \begin{loglogaxis}[%
    width  = .675\textwidth,
    height = .4\textwidth,
    scale only axis,
    xmin = 1e-8,
    xmax = 1e+2,
    xtick = {1e-8, 1e-6, 1e-4, 1e-2, 1e+0, 1e+2},
    ymin = 1e-12,
    ymax = 1e+1,
    xminorticks = false,
    yminorticks = false,
    xlabel = {Frequency $\omega_{1}$ (rad/s)},
    ylabel = {Magnitude},
    ylabel style   = {yshift = -.3em},
    scaled x ticks = false,
    x tick label style = {/pgf/number format/fixed},
    cycle list name    = plotlist]
    
    \pgfplotsset{cycle list shift = 1}
    \foreach \y in {1, 2, 3, 4} {
      \addplot table[x index = 0, y index = \y] {\tableRELERR};
    }
  \end{loglogaxis}
\end{tikzpicture}%
  \tikzexternaldisable%

    \subcaption{Relative errors.}
  \end{subfigure}
  \vspace{.5\baselineskip}

  \tikzexternalenable%
  \tikzsetnextfilename{rail_freq_g1_legend}%
  \begin{tikzpicture}  
  \begin{axis}[%
    hide axis,
    scale only axis,
    width = 1mm,
    legend columns = 5, 
    legend style = {
      at     = {(0,0)},
      anchor = center,
      /tikz/every even column/.append style = {column sep = 0.5cm}},
    cycle list name = plotlist]
    
    \pgfplotsinvokeforeach{1,...,5}{\addplot coordinates {(0,0)};}
        
    \addlegendentry{Original system};
    \addlegendentry{\mtxint};
    \addlegendentry{\bwtint};
    \addlegendentry{\sftint};
    \addlegendentry{\sttint};
  \end{axis}
\end{tikzpicture}%
  \tikzexternaldisable%

  \caption{Frequency domain results of the first transfer functions for the 
    steel profile.}
  \label{fig:rail_freq_g1}
\end{figure}

\begin{figure}[t]
  \centering
  \begin{subfigure}[b]{.49\textwidth}
    \centering
  \tikzexternalenable%
  \tikzsetnextfilename{rail_freq_g2_err_mtx}%
  \begin{tikzpicture}
  \begin{loglogaxis}[
    view   = {0}{90},
    width  = .675\textwidth,
    height = .4\textwidth,
    scale only axis,
    axis on top,
    xmin   = 1e-8,
    xmax   = 1e+2,
    ymin   = 1e-8,
    ymax   = 1e+2,
    xtick  = {1e-8, 1e-6, 1e-4, 1e-2, 1e+0, 1e+2},
    ytick  = {1e-8, 1e-6, 1e-4, 1e-2, 1e+0, 1e+2},
    xminorticks = false,
    yminorticks = false,
    xlabel = {Frequency $\omega_{1}$ (rad/s)},
    ylabel = {Frequency $\omega_{2}$ (rad/s)},
    ylabel style = {yshift = -.3em},
    scaled x ticks = false,
    x tick label style = {/pgf/number format/fixed}]
        
      \addplot graphics[xmin = 1e-8, xmax = 1e+2, ymin = 1e-8, ymax = 1e+2]
        {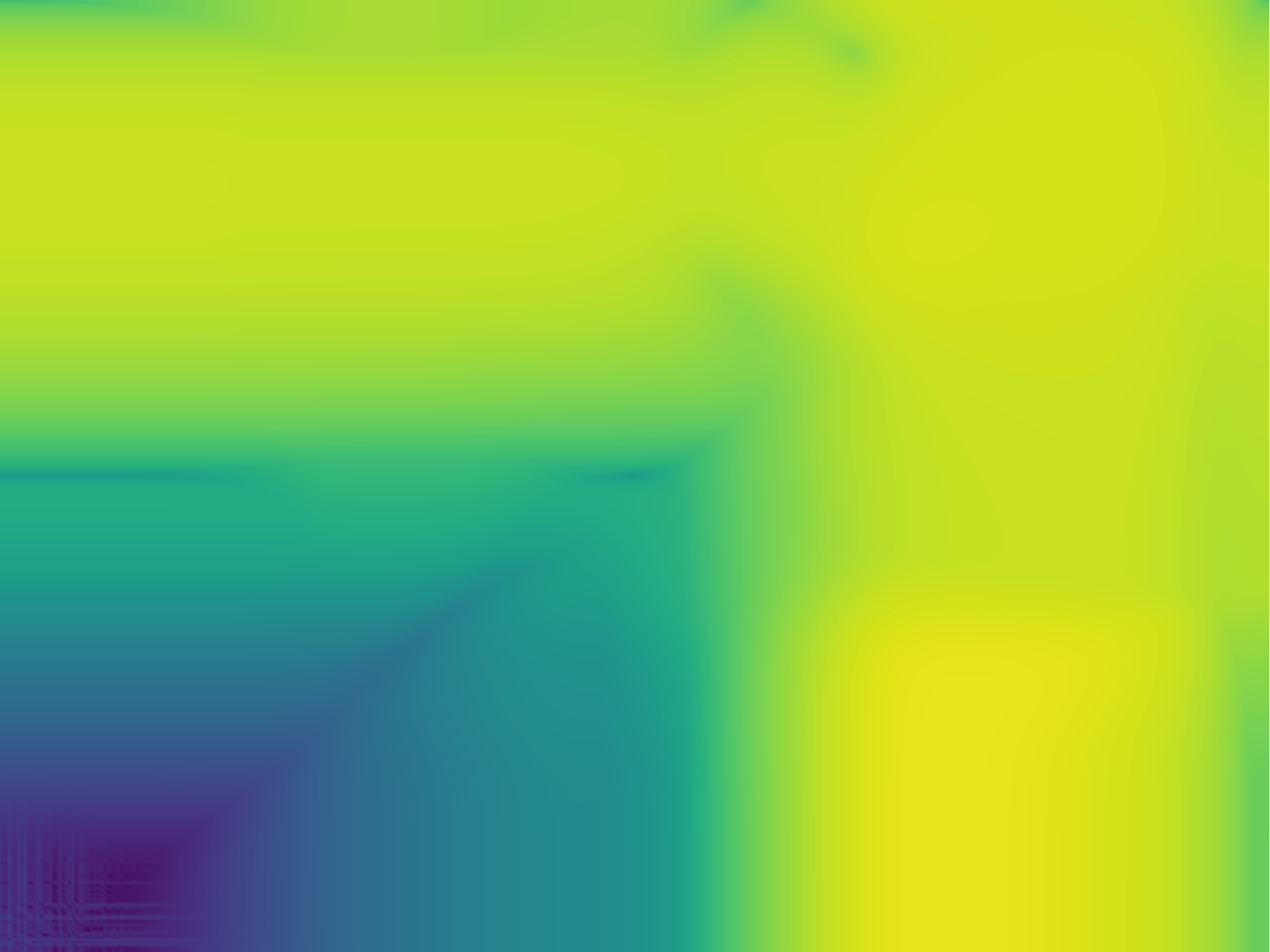};
            
  \end{loglogaxis}
\end{tikzpicture}%
  \tikzexternaldisable%

    \subcaption{\mtxint{}.}
  \end{subfigure}
  \hfill
  \begin{subfigure}[b]{.49\textwidth}
    \centering
  \tikzexternalenable%
  \tikzsetnextfilename{rail_freq_g2_err_bwt}%
  \begin{tikzpicture}
  \begin{loglogaxis}[
    view   = {0}{90},
    width  = .675\textwidth,
    height = .4\textwidth,
    scale only axis,
    axis on top,
    xmin   = 1e-8,
    xmax   = 1e+2,
    ymin   = 1e-8,
    ymax   = 1e+2,
    xtick  = {1e-8, 1e-6, 1e-4, 1e-2, 1e+0, 1e+2},
    ytick  = {1e-8, 1e-6, 1e-4, 1e-2, 1e+0, 1e+2},
    xminorticks = false,
    yminorticks = false,
    xlabel = {Frequency $\omega_{1}$ (rad/s)},
    ylabel = {Frequency $\omega_{2}$ (rad/s)},
    ylabel style = {yshift = -.3em},
    scaled x ticks = false,
    x tick label style = {/pgf/number format/fixed}]
        
      \addplot graphics[xmin = 1e-8, xmax = 1e+2, ymin = 1e-8, ymax = 1e+2]
        {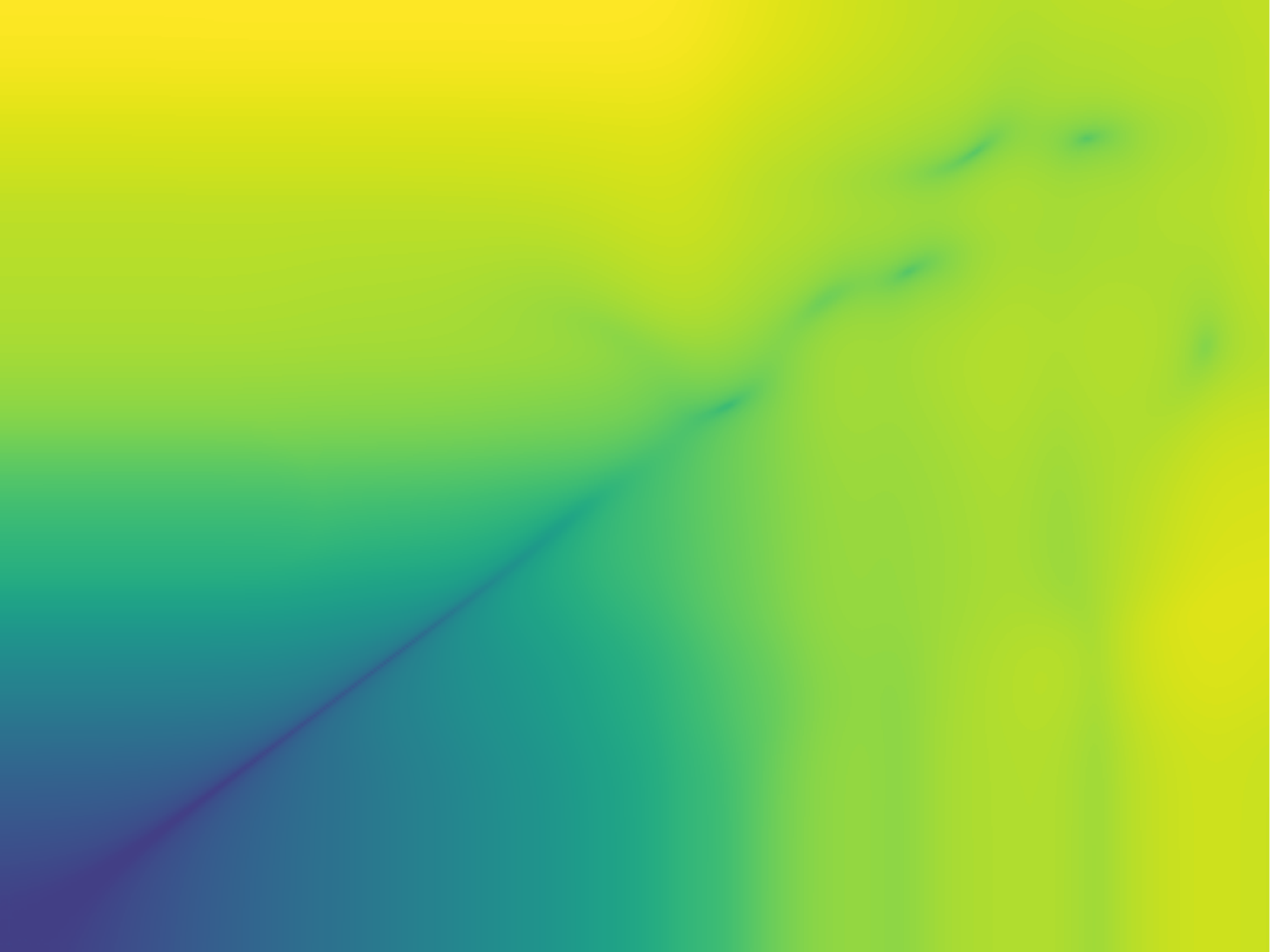};
            
  \end{loglogaxis}
\end{tikzpicture}%
  \tikzexternaldisable%

    \subcaption{\bwtint{}.}
  \end{subfigure}
    
  \begin{subfigure}[b]{.49\textwidth}
    \centering
  \tikzexternalenable%
  \tikzsetnextfilename{rail_freq_g2_err_sft}%
  \begin{tikzpicture}
  \begin{loglogaxis}[
    view   = {0}{90},
    width  = .675\textwidth,
    height = .4\textwidth,
    scale only axis,
    axis on top,
    xmin   = 1e-8,
    xmax   = 1e+2,
    ymin   = 1e-8,
    ymax   = 1e+2,
    xtick  = {1e-8, 1e-6, 1e-4, 1e-2, 1e+0, 1e+2},
    ytick  = {1e-8, 1e-6, 1e-4, 1e-2, 1e+0, 1e+2},
    xminorticks = false,
    yminorticks = false,
    xlabel = {Frequency $\omega_{1}$ (rad/s)},
    ylabel = {Frequency $\omega_{2}$ (rad/s)},
    ylabel style = {yshift = -.3em},
    scaled x ticks = false,
    x tick label style = {/pgf/number format/fixed}]
        
      \addplot graphics[xmin = 1e-8, xmax = 1e+2, ymin = 1e-8, ymax = 1e+2]
        {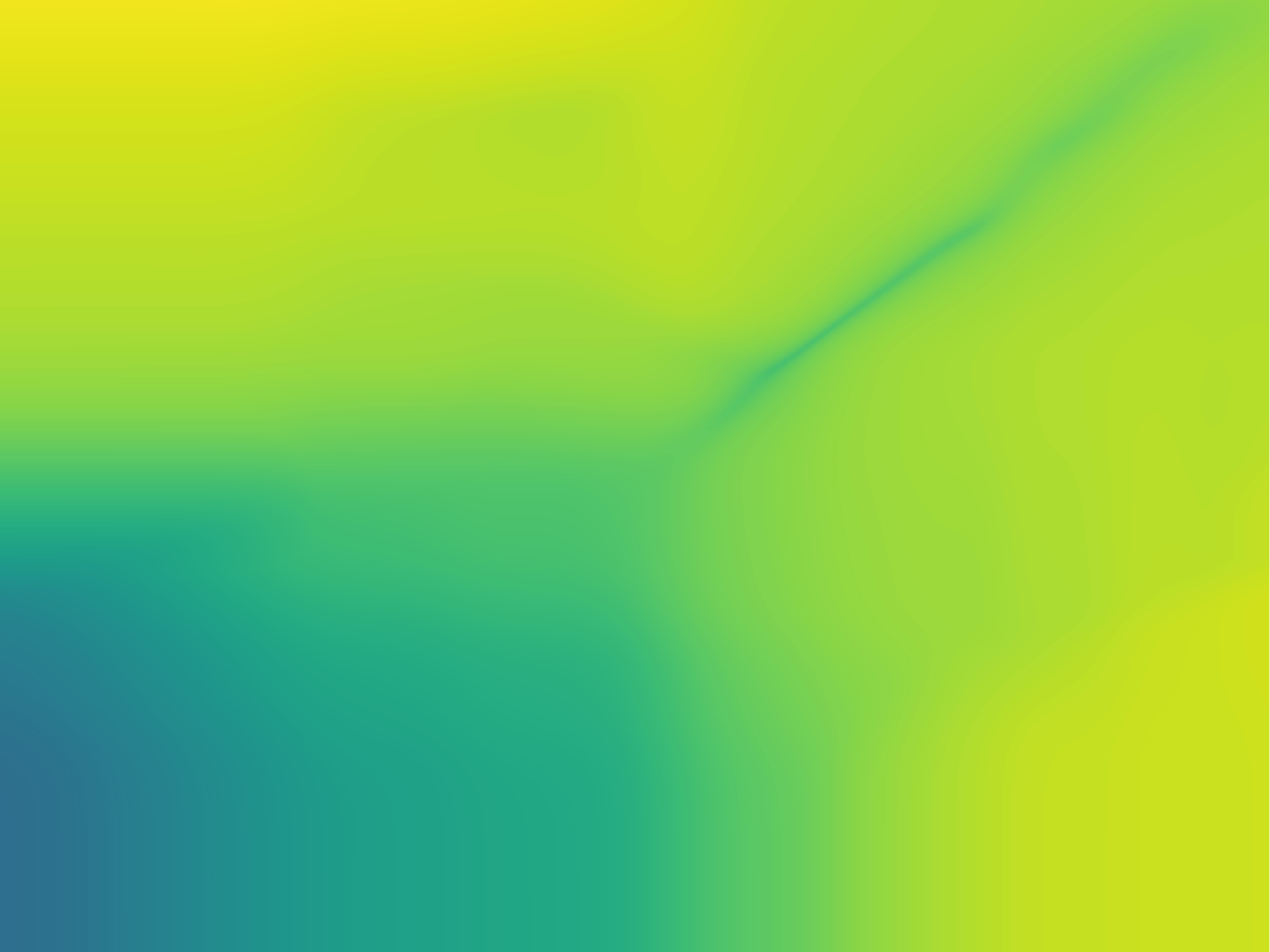};
            
  \end{loglogaxis}
\end{tikzpicture}%
  \tikzexternaldisable%

    \subcaption{\sftint{}.}
  \end{subfigure}
  \hfill
  \begin{subfigure}[b]{.49\textwidth}
    \centering
  \tikzexternalenable%
  \tikzsetnextfilename{rail_freq_g2_err_stt}%
  \begin{tikzpicture}
  \begin{loglogaxis}[
    view   = {0}{90},
    width  = .675\textwidth,
    height = .4\textwidth,
    scale only axis,
    axis on top,
    xmin   = 1e-8,
    xmax   = 1e+2,
    ymin   = 1e-8,
    ymax   = 1e+2,
    xtick  = {1e-8, 1e-6, 1e-4, 1e-2, 1e+0, 1e+2},
    ytick  = {1e-8, 1e-6, 1e-4, 1e-2, 1e+0, 1e+2},
    xminorticks = false,
    yminorticks = false,
    xlabel = {Frequency $\omega_{1}$ (rad/s)},
    ylabel = {Frequency $\omega_{2}$ (rad/s)},
    ylabel style = {yshift = -.3em},
    scaled x ticks = false,
    x tick label style = {/pgf/number format/fixed}]
        
      \addplot graphics[xmin = 1e-8, xmax = 1e+2, ymin = 1e-8, ymax = 1e+2]
        {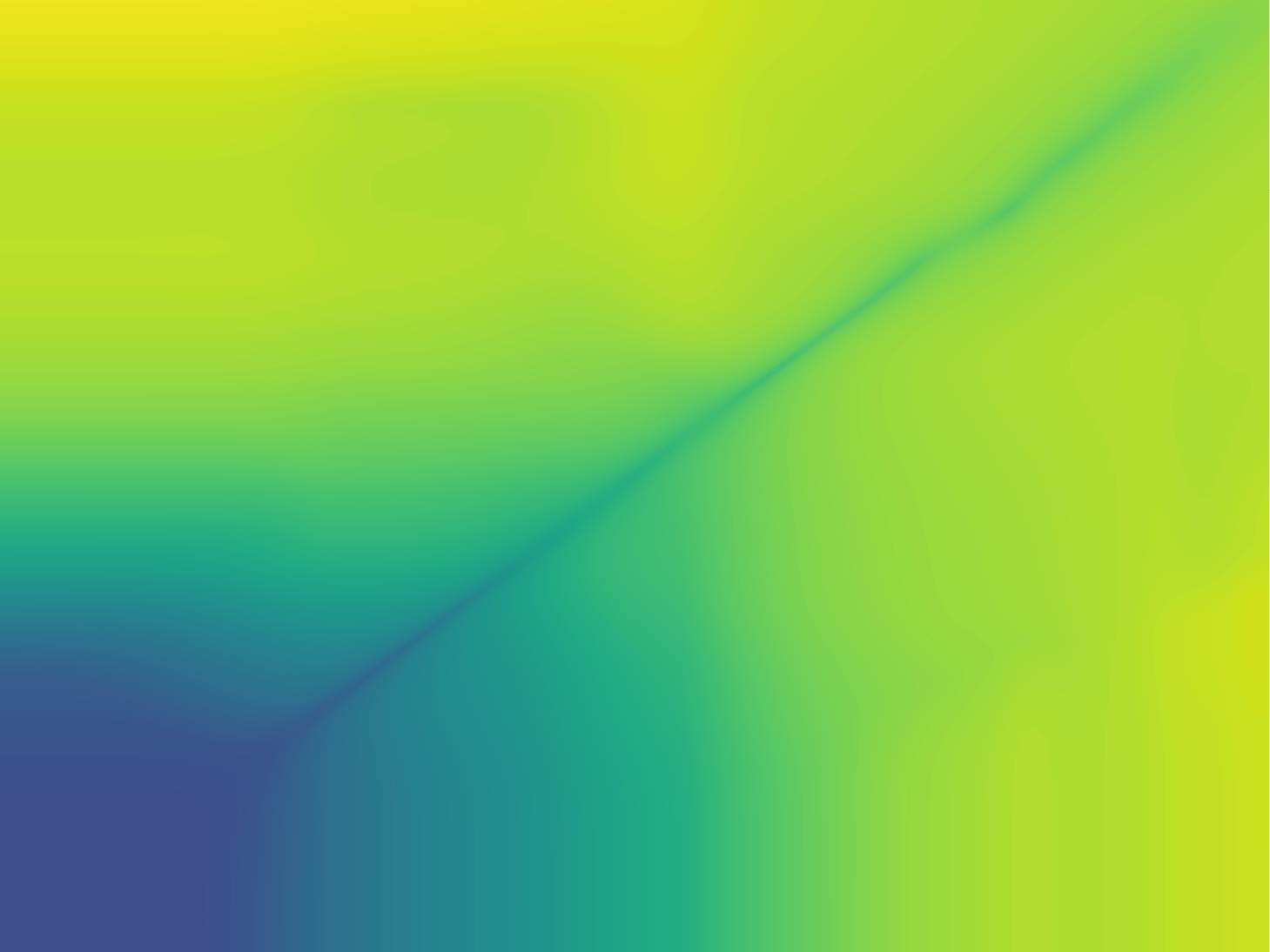};
            
  \end{loglogaxis}
\end{tikzpicture}%
  \tikzexternaldisable%

    \subcaption{\sttint{}.}
  \end{subfigure}
  \vspace{.5\baselineskip}

  \tikzexternalenable%
  \tikzsetnextfilename{rail_freq_g2_legend}%
  \begin{tikzpicture}
  \node[draw = none, minimum width = 0cm, inner sep = 0cm](start){};
  \node(leg) at (start.north east) [anchor = north west]{\tikz
  \begin{axis}[%
    hide axis,
    scale only axis,
    width  = 10cm,
    height = .1cm,
    point meta min = -10.8792,
    point meta max = 0.7036,
    colorbar,
    colorbar horizontal,
    colorbar style = {
      xticklabel = $10^{\pgfmathparse{\tick}
        \pgfmathprintnumber\pgfmathresult}$,
      at = {(.5, 0)},
      anchor = north},
    scaled x ticks = false,
    x tick label style = {/pgf/number format/fixed}]
  \end{axis};};
  \node[draw = none, minimum width = 0cm, inner sep = 0cm](end)
    at (leg.north east) [anchor = north west]{};
\end{tikzpicture}%
  \tikzexternaldisable%

  \caption{Frequency domain results of the second transfer functions for the 
    steel profile.}
  \label{fig:rail_freq_g2}
\end{figure}

The reduced-order models are constructed as follows:
\begin{description}
  \item[MtxInt] with the interpolation points $\pm \texttt{logspace(-8, 2, 3)}
    \i$ for the first and second subsystem transfer functions.
    Due to the rank deficiency in the generated columns, a rank truncation is
    performed to compress the model reduction basis, which yields a 
    reduced-order model size of $r_{\sf{mtx}} = 146$.
  \item[BwtInt] with the interpolation points $\pm \texttt{logspace(-8, 2, 8)} 
    \i$ for the first and second subsystem transfer functions
    resulting in the reduced order $r_{\sf{bwt}} = 112$.
  \item[SftInt] with the interpolation points $\pm \texttt{logspace(-8, 2, 28)} 
    \i$ and the scaling vectors $d^{(i)} = \mathds{1}_{m}$ for the first and 
    second subsystem transfer functions resulting in the reduced order
    $r_{\sf{sft}} = 112$.
  \item[SttInt] with the interpolation points $\pm \texttt{logspace(-8, 2, 28)} 
    \i$ and the scaling vectors $d^{(i)} = b^{(i)}$ for the first and second 
    subsystem transfer functions such that the reduced-order model size is
    $r_{\sf{stt}} = 112$.
\end{description}
For all reduced-order models, we have chosen the same interval for 
the interpolation points.
However, since the reduced-order dimension grows differently for different
approaches, the number of interpolation points over the same interval differs
so that the reduced-order models have the same (or at least comparable) order.
For all directions, normalized random vectors from a uniform distribution on 
$[0, 1]$ have been used.
For all reduced-order models only one-sided projections ($W$ is set to
$W = V$) have been applied resulting in reduced-order models having
asymptotically stable linear parts.
Note that the matrix interpolation has a much larger reduced order as
anticipated.

\Cref{fig:rail_time} shows the results for a time simulation using a unit step
signal as input.
All reduced-order models yield accurate approximations.
The relative errors  reveal that overall, \sftint{} and \sttint{} perform best,
while \mtxint{} and \bwtint{} are several orders of magnitude worse in accuracy
over the whole time interval.
The maximum errors attained are given in \Cref{tab:rail}.
There, the two new tangential approaches \sftint{} and \sttint{} are both three
orders of magnitude better than \mtxint{} for the time domain simulation.

The frequency domain analysis (\Cref{fig:rail_freq_g1,fig:rail_freq_g2})
illustrates similar conclusions.
In the case of the first subsystem transfer function, \mtxint{} performs overall
worst followed by \bwtint{}.
The new approaches \sftint{} and \sttint{} again show the smallest errors over
the full frequency range.
For the second transfer function level, all approaches behave comparable.
The tangential approaches provide better errors than \mtxint{} if both frequency
arguments are close to each other and \mtxint{} is more accurate for very small
frequencies.
For both transfer function levels, the maximum errors are given in 
\Cref{tab:rail}.

\begin{table}[t]
  \caption{Maximum relative errors for the steel profile.}
  \label{tab:rail}
  \centering
  {\renewcommand{\arraystretch}{1.25}%
  \setlength{\tabcolsep}{.8em}
  \begin{tabular}{crrrr}
    \hline
    &
      \multicolumn{1}{c}{$\boldsymbol{\mtxint}$} &
      \multicolumn{1}{c}{$\boldsymbol{\bwtint}$} &
      \multicolumn{1}{c}{$\boldsymbol{\sftint}$} &
      \multicolumn{1}{c}{$\boldsymbol{\sttint}$} \\
    \hline\noalign{\medskip}
    $\err_{\rm{sim}}$ &
      $4.0019\texttt{e-}01$ &
      $1.6248\texttt{e-}02$ &
      $3.2016\texttt{e-}04$ &
      $3.3950\texttt{e-}04$ \\
    $\err_{\rm{G_{1}}}$ &
      $6.3977\texttt{e-}01$ &
      $2.0925\texttt{e-}01$ &
      $2.3971\texttt{e-}02$ &
      $1.9581\texttt{e-}02$ \\
    $\err_{\rm{G_{2}}}$ &
      $2.1258\texttt{e+}00$ &
      $5.0533\texttt{e+}00$ &
      $3.1073\texttt{e+}00$ &
      $2.6049\texttt{e+}00$ \\
    \noalign{\medskip}\hline\noalign{\smallskip}
  \end{tabular}}
\end{table}

%%%%%%%%%%%%%%%%%%%%%%%%%%%%%%%%%%%%%%%%%%%%%%%%%%%%%%%%%%%%%%%%%%%%%%%%%%%%%%%%

\subsection{Time-delayed heated rod}

Here, we consider the single-input/single-output structured bilinear system
from~\cite{GosPBetal19,BenGW21a} that models a heated rod with distributed
control and homogeneous Dirichlet boundary conditions, which is cooled by a
delayed feedback.
The underlying dynamics are described by the one dimensional heat equation
\begin{align} \label{eqn:time_delay}
  \partial_{t} v(t, \zeta) & = \Delta v(t, \zeta)
    - 2 \sin(\zeta) v(t, \zeta)
    + 2 \sin(\zeta) v(t - 1, \zeta)
    + u(t),
\end{align}
with $(t, \zeta) \in (0, t_{\mathrm{f}}) \times (0, \pi)$ and boundary
conditions $v(t, 0) = v(t, \pi) = 0$ for all $t \in [0, t_{\mathrm{f}}]$.
As extension of~\cref{eqn:time_delay} to the MIMO case, we consider independent 
control signals on equally sized sections of the rod as well as analogous
measurements.
Using centered differences for the spatial discretization, we obtain the
bilinear time-delay system
\begin{align*}
  \dot{x}(t) & = Ax(t) + A_{\rm{d}} x(t - 1)
    + \sum\limits_{k = 1}^{m} N_{k} x(t) u_{k}(t) + B u(t), \\
  y(t) & = C x(t),
\end{align*}
with $A,  A_{\rm{d}}, N_{k} \in \R^{n \times n}$, for $k = 1, \ldots, m$,
$B \in \R^{n \times m}$ and $C \in \R^{p \times n}$.
For our experiments, we have chosen $n = 5\,000$, $m = 5$ and $p = 2$.

\begin{figure}[t]
  \centering
  \begin{subfigure}[b]{.49\textwidth}
    \centering
  \tikzexternalenable%
  \tikzsetnextfilename{time_delay_time_sim}%
  \begin{tikzpicture}
  \pgfplotstableread{graphics/data/time_delay_time_sim.dat}\tableSIM
  
  \begin{axis}[%
    width  = .675\textwidth,
    height = .4\textwidth,
    scale only axis,
    xmin = 0,
    xmax = 10,
    ymin = -0.15,
    ymax = 0.55,
    xminorticks = false,
    yminorticks = false,
    xlabel = {Time $t$},
    ylabel = {Amplitude},
    ylabel style   = {yshift = -.3em},
    scaled x ticks = false,
    x tick label style = {/pgf/number format/fixed},
    cycle list name    = plotlist]
    
    \foreach \p in {1, 2}{
      \foreach[evaluate=\y as \res using int(\y + \p)] \y in {0, 2, ..., 8} {
        \addplot table[x index = 0, y index = \res] {\tableSIM};
      }
    }
  \end{axis}
\end{tikzpicture}%
  \tikzexternaldisable%

    \subcaption{Time response.}
  \end{subfigure}
  \hfill
  \begin{subfigure}[b]{.49\textwidth}
    \centering
  \tikzexternalenable%
  \tikzsetnextfilename{time_delay_time_err}%
  \begin{tikzpicture}
  \pgfplotstableread{graphics/data/time_delay_time_err.dat}\tableRELERR
  
  \begin{semilogyaxis}[%
    width  = .675\textwidth,
    height = .4\textwidth,
    scale only axis,
    xmin = 0,
    xmax = 10,
    ymin = 1e-11,
    ymax = 1e-4,
    xminorticks = false,
    yminorticks = false,
    xlabel = {Time $t$},
    ylabel = {Magnitude},
    ylabel style = {yshift = -.3em},
    scaled x ticks = false,
    x tick label style = {/pgf/number format/fixed},
    cycle list name    = plotlist]
        
    \pgfplotsset{cycle list shift = 1}
    \foreach \y in {1, 2, 3, 4} {
      \addplot table[x index = 0, y index = \y] {\tableRELERR};
    }
  \end{semilogyaxis}
\end{tikzpicture}%
  \tikzexternaldisable%

    \subcaption{Relative errors.}
  \end{subfigure}
  \vspace{.5\baselineskip}

  \tikzexternalenable%
  \tikzsetnextfilename{time_delay_time_legend}%
  \begin{tikzpicture}  
  \begin{axis}[%
    hide axis,
    scale only axis,
    width = 1mm,
    legend columns = 5, 
    legend style = {
      at     = {(0,0)},
      anchor = center,
      /tikz/every even column/.append style = {column sep = 0.5cm}},
    cycle list name = plotlist]
    
    \pgfplotsinvokeforeach{1,...,5}{\addplot coordinates {(0,0)};}
        
    \addlegendentry{Original system};
    \addlegendentry{\mtxint};
    \addlegendentry{\bwtint};
    \addlegendentry{\sftint};
    \addlegendentry{\sttint};
  \end{axis}
\end{tikzpicture}%
  \tikzexternaldisable%

  \caption{Time domain simulation results for the time-delayed heated rod.}
  \label{fig:time_delay_time}
\end{figure}

The reduced-order models are constructed as follows:

\begin{description}
  \item[MtxInt] with the interpolation points $\pm 1 \i$ for the 
    first and second subsystem transfer functions.
    To overcome stability issues, only a one-sided projection was applied.
    The generated columns for the basis are rank deficient, therefore, a rank 
    truncation has been performed to compress the model reduction basis
    resulting in the reduced order $r_{\sf{mtx}} = 36$.
  \item[BwtInt] with the interpolation points $\pm \texttt{logspace(-4, 4, 3)} 
    \i$ for the first and second subsystem transfer functions with two-sided
    projection yielding the reduced order $r_{\sf{bwt}} = 36$.
  \item[SftInt] with the interpolation points $\pm \texttt{logspace(-4, 4, 9)} 
    \i$ and the scaling vectors $d^{(i)} = \mathds{1}_{m}$ for the first and 
    second subsystem transfer functions with two-sided projection to get 
    a reduced-order model of size $r_{\sf{sft}} = 36$.
  \item[SttInt] with the interpolation points $\pm \texttt{logspace(-4, 4, 9)} 
    \i$ and the scaling vectors $d^{(i)} = b^{(i)}$ for the first and second 
    subsystem transfer functions with two-sided projection to get 
    a reduced-order model of size $r_{\sf{stt}} = 36$.
\end{description}
For all directions, normalized random vectors from a uniform distribution on 
$[0, 1]$ have been used.
Note that all reduced-order models have the same time-delay structure as the 
original system~\cref{eqn:time_delay}.
All reduced-order models are chosen to be of the same size.

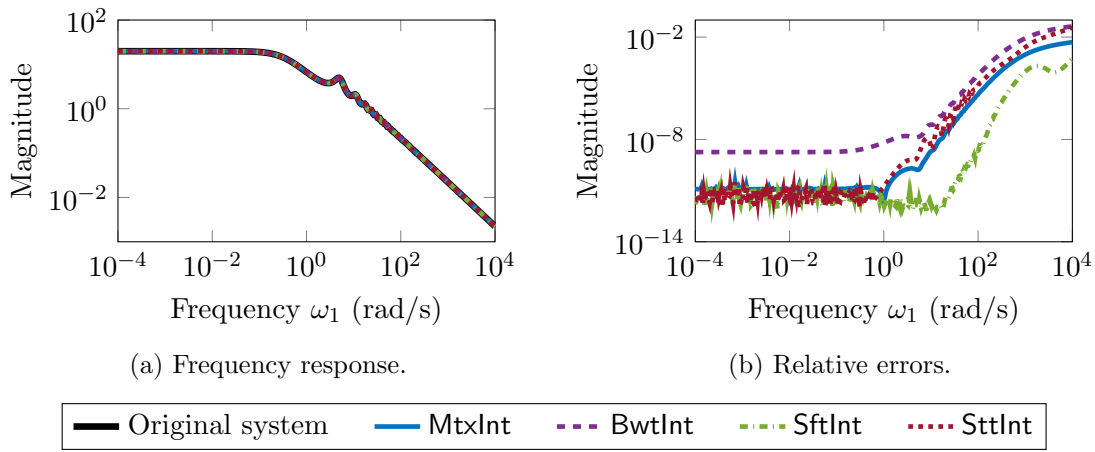
\begin{figure}[t]
  \centering
  \begin{subfigure}[b]{.49\textwidth}
    \centering
  \tikzexternalenable%
  \tikzsetnextfilename{time_delay_freq_g1_tf}%
  \begin{tikzpicture}
  \pgfplotstableread{graphics/data/time_delay_freq_g1_tf.dat}\tableTF
  
  \begin{loglogaxis}[%
    width  = .675\textwidth,
    height = .4\textwidth,
    scale only axis,
    xmin = 1e-4,
    xmax = 1e+4,
    xtick = {1e-4, 1e-2, 1e+0, 1e+2, 1e+4},
    ymin = 1e-3,
    ymax = 1e+2,
    xminorticks = false,
    yminorticks = false,
    xlabel = {Frequency $\omega_{1}$ (rad/s)},
    ylabel = {Magnitude},
    ylabel style   = {yshift = -.3em},
    scaled x ticks = false,
    x tick label style = {/pgf/number format/fixed},
    cycle list name    = plotlist]
    
    \foreach \y in {1, 2, ..., 5} {
      \addplot table[x index = 0, y index = \y] {\tableTF};
    }
  \end{loglogaxis}
\end{tikzpicture}%
  \tikzexternaldisable%

    \subcaption{Frequency response.}
  \end{subfigure}
  \hfill
  \begin{subfigure}[b]{.49\textwidth}
    \centering
  \tikzexternalenable%
  \tikzsetnextfilename{time_delay_freq_g1_err}%
  \begin{tikzpicture}
  \pgfplotstableread{graphics/data/time_delay_freq_g1_err.dat}\tableRELERR
  
  \begin{loglogaxis}[%
    width  = .675\textwidth,
    height = .4\textwidth,
    scale only axis,
    xmin = 1e-4,
    xmax = 1e+4,
    xtick = {1e-4, 1e-2, 1e+0, 1e+2, 1e+4},
    ymin = 1e-14,
    ymax = 1e-1,
    xminorticks = false,
    yminorticks = false,
    xlabel = {Frequency $\omega_{1}$ (rad/s)},
    ylabel = {Magnitude},
    ylabel style   = {yshift = -.3em},
    scaled x ticks = false,
    x tick label style = {/pgf/number format/fixed},
    cycle list name    = plotlist]
    
    \pgfplotsset{cycle list shift = 1}
    \foreach \y in {1, 2, 3, 4} {
      \addplot table[x index = 0, y index = \y] {\tableRELERR};
    }
  \end{loglogaxis}
\end{tikzpicture}%
  \tikzexternaldisable%

    \subcaption{Relative errors.}
  \end{subfigure}
  \vspace{.5\baselineskip}

  \tikzexternalenable%
  \tikzsetnextfilename{time_delay_freq_g1_legend}%
  \begin{tikzpicture}  
  \begin{axis}[%
    hide axis,
    scale only axis,
    width = 1mm,
    legend columns = 5, 
    legend style = {
      at     = {(0,0)},
      anchor = center,
      /tikz/every even column/.append style = {column sep = 0.5cm}},
    cycle list name = plotlist]
    
    \pgfplotsinvokeforeach{1,...,5}{\addplot coordinates {(0,0)};}
        
    \addlegendentry{Original system};
    \addlegendentry{\mtxint};
    \addlegendentry{\bwtint};
    \addlegendentry{\sftint};
    \addlegendentry{\sttint};
  \end{axis}
\end{tikzpicture}%
  \tikzexternaldisable%

  \caption{Frequency domain results of the first transfer functions for the 
    time-delayed heated rod.}
  \label{fig:time_delay_freq_g1}
\end{figure}

\begin{figure}[t]
  \centering
  \begin{subfigure}[b]{.49\textwidth}
    \centering
  \tikzexternalenable%
  \tikzsetnextfilename{time_delay_freq_g2_err_mtx}%
  \begin{tikzpicture}
  \begin{loglogaxis}[
    view   = {0}{90},
    width  = .675\textwidth,
    height = .4\textwidth,
    scale only axis,
    axis on top,
    xmin   = 1e-4,
    xmax   = 1e+4,
    ymin   = 1e-4,
    ymax   = 1e+4,
    xtick  = {1e-4, 1e-2, 1e0, 1e+2, 1e+4},
    ytick  = {1e-4, 1e-2, 1e0, 1e+2, 1e+4},
    xminorticks = false,
    yminorticks = false,
    xlabel = {Frequency $\omega_{1}$ (rad/s)},
    ylabel = {Frequency $\omega_{2}$ (rad/s)},
    ylabel style = {yshift = -.3em},
    scaled x ticks = false,
    x tick label style = {/pgf/number format/fixed}]
        
      \addplot graphics[xmin = 1e-4, xmax = 1e+4, ymin = 1e-4, ymax = 1e+4]
        {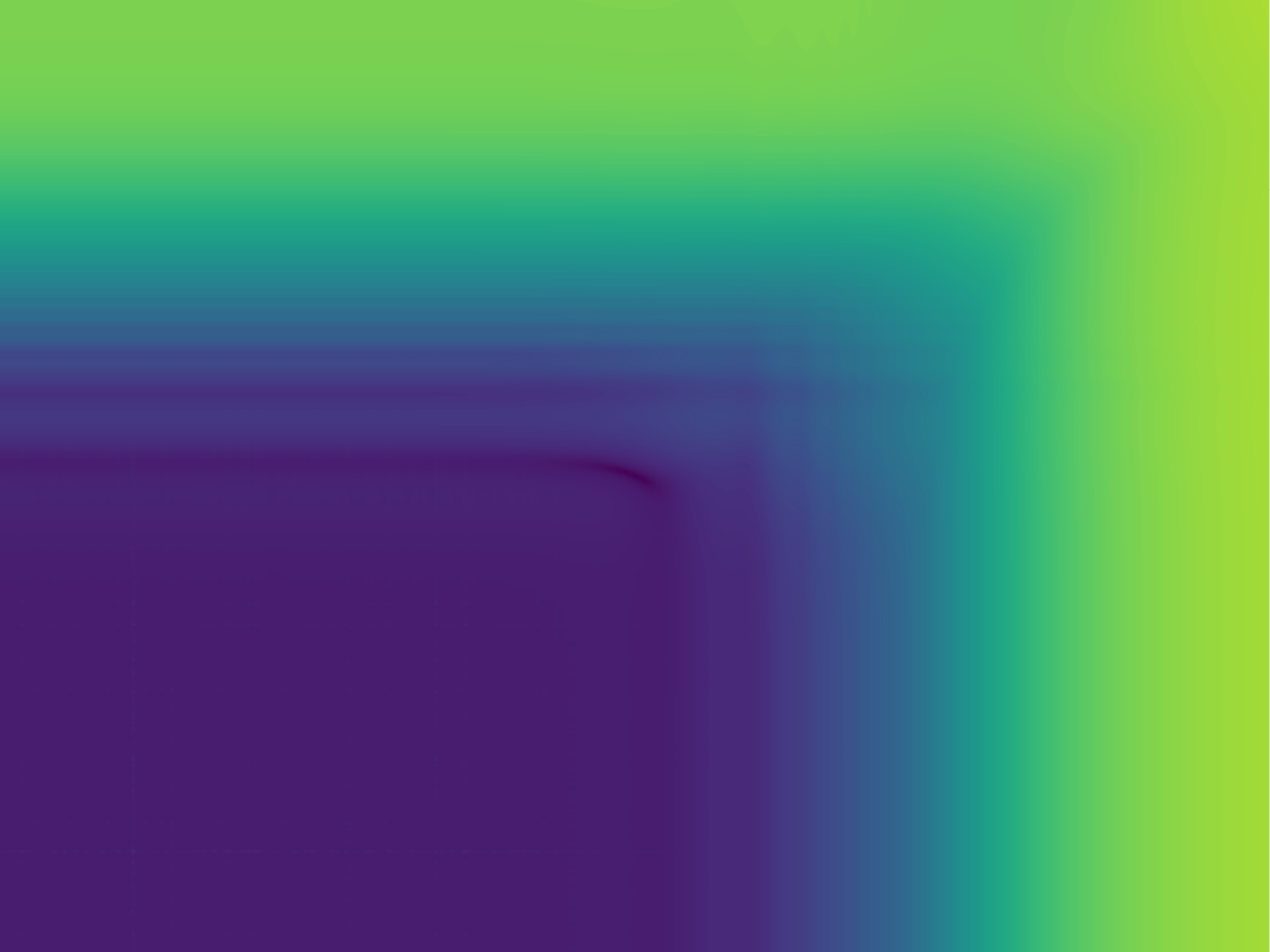};
            
  \end{loglogaxis}
\end{tikzpicture}%
  \tikzexternaldisable%

    \subcaption{\mtxint{}.}
  \end{subfigure}
  \hfill
  \begin{subfigure}[b]{.49\textwidth}
    \centering
  \tikzexternalenable%
  \tikzsetnextfilename{time_delay_freq_g2_err_bwt}%
  \begin{tikzpicture}
  \begin{loglogaxis}[
    view   = {0}{90},
    width  = .675\textwidth,
    height = .4\textwidth,
    scale only axis,
    axis on top,
    xmin   = 1e-4,
    xmax   = 1e+4,
    ymin   = 1e-4,
    ymax   = 1e+4,
    xtick  = {1e-4, 1e-2, 1e0, 1e+2, 1e+4},
    ytick  = {1e-4, 1e-2, 1e0, 1e+2, 1e+4},
    xminorticks = false,
    yminorticks = false,
    xlabel = {Frequency $\omega_{1}$ (rad/s)},
    ylabel = {Frequency $\omega_{2}$ (rad/s)},
    ylabel style = {yshift = -.3em},
    scaled x ticks = false,
    x tick label style = {/pgf/number format/fixed}]
        
      \addplot graphics[xmin = 1e-4, xmax = 1e+4, ymin = 1e-4, ymax = 1e+4]
        {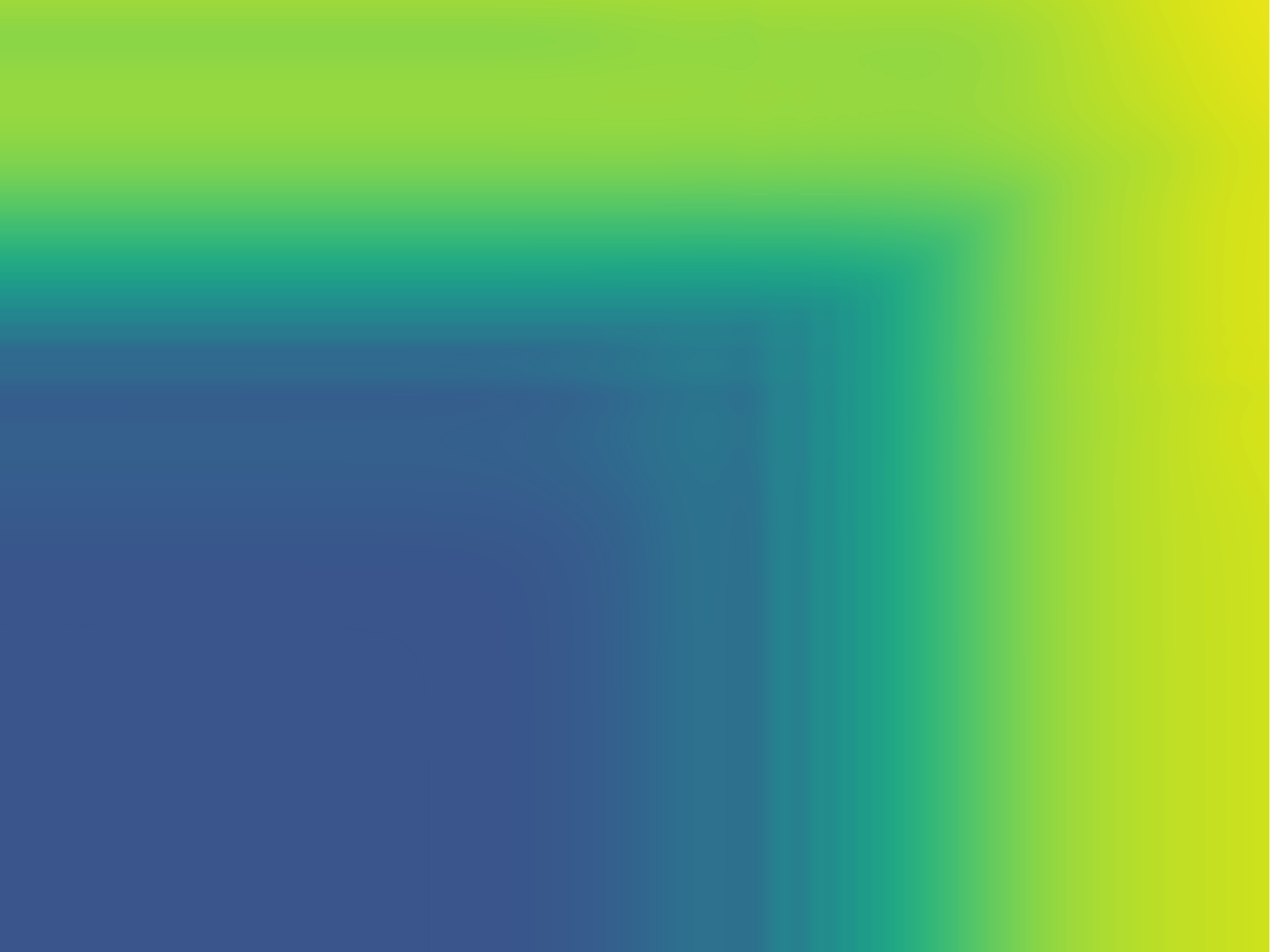};
            
  \end{loglogaxis}
\end{tikzpicture}%
  \tikzexternaldisable%

    \subcaption{\bwtint{}.}
  \end{subfigure}
    
  \begin{subfigure}[b]{.49\textwidth}
    \centering
  \tikzexternalenable%
  \tikzsetnextfilename{time_delay_freq_g2_err_sft}%
  \begin{tikzpicture}
  \begin{loglogaxis}[
    view   = {0}{90},
    width  = .675\textwidth,
    height = .4\textwidth,
    scale only axis,
    axis on top,
    xmin   = 1e-4,
    xmax   = 1e+4,
    ymin   = 1e-4,
    ymax   = 1e+4,
    xtick  = {1e-4, 1e-2, 1e0, 1e+2, 1e+4},
    ytick  = {1e-4, 1e-2, 1e0, 1e+2, 1e+4},
    xminorticks = false,
    yminorticks = false,
    xlabel = {Frequency $\omega_{1}$ (rad/s)},
    ylabel = {Frequency $\omega_{2}$ (rad/s)},
    ylabel style = {yshift = -.3em},
    scaled x ticks = false,
    x tick label style = {/pgf/number format/fixed}]
        
      \addplot graphics[xmin = 1e-4, xmax = 1e+4, ymin = 1e-4, ymax = 1e+4]
        {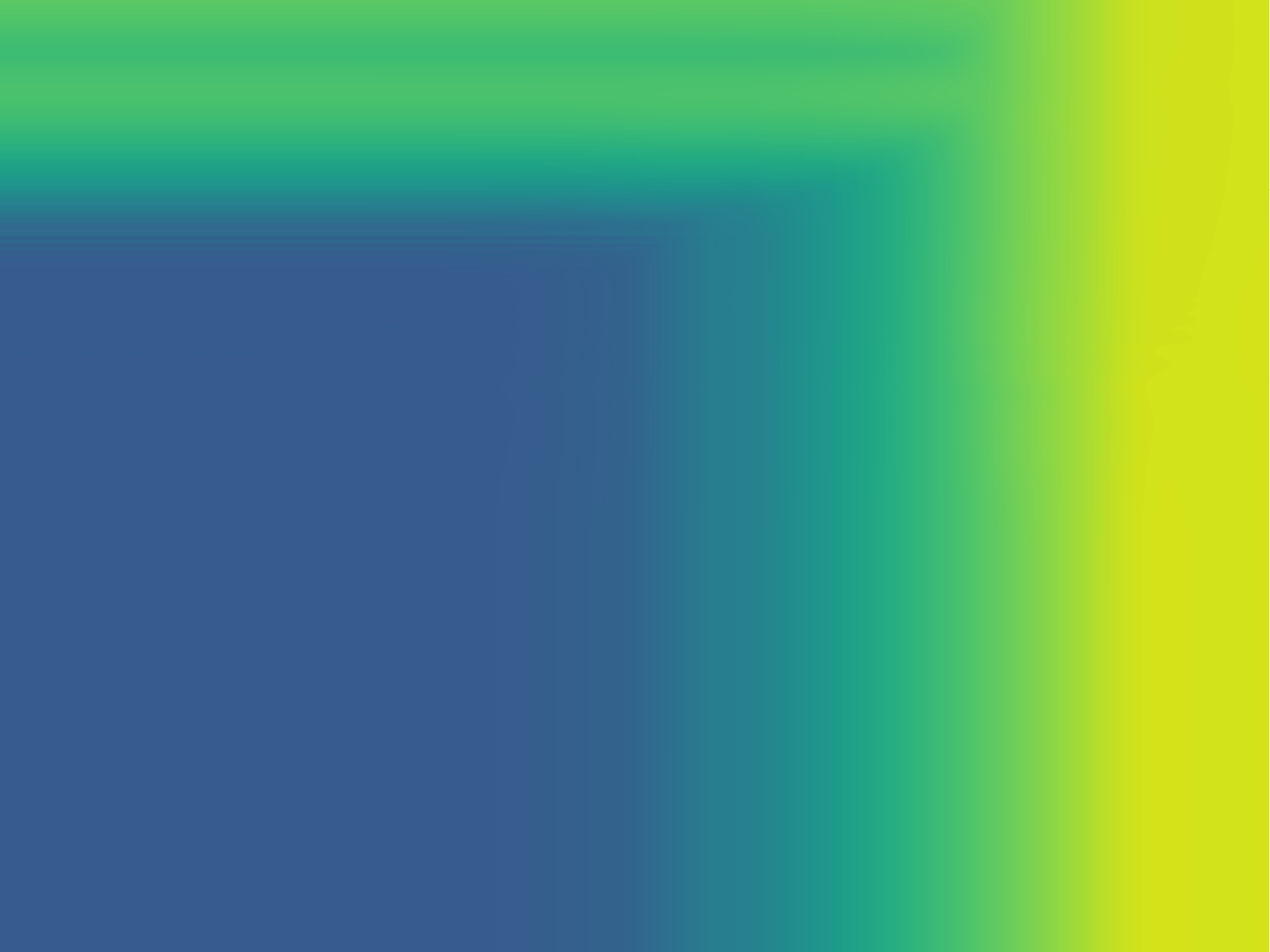};
            
  \end{loglogaxis}
\end{tikzpicture}%
  \tikzexternaldisable%

    \subcaption{\sftint{}.}
  \end{subfigure}
  \hfill
  \begin{subfigure}[b]{.49\textwidth}
    \centering
  \tikzexternalenable%
  \tikzsetnextfilename{time_delay_freq_g2_err_stt}%
  \begin{tikzpicture}
  \begin{loglogaxis}[
    view   = {0}{90},
    width  = .675\textwidth,
    height = .4\textwidth,
    scale only axis,
    axis on top,
    xmin   = 1e-4,
    xmax   = 1e+4,
    ymin   = 1e-4,
    ymax   = 1e+4,
    xtick  = {1e-4, 1e-2, 1e0, 1e+2, 1e+4},
    ytick  = {1e-4, 1e-2, 1e0, 1e+2, 1e+4},
    xminorticks = false,
    yminorticks = false,
    xlabel = {Frequency $\omega_{1}$ (rad/s)},
    ylabel = {Frequency $\omega_{2}$ (rad/s)},
    ylabel style = {yshift = -.3em},
    scaled x ticks = false,
    x tick label style = {/pgf/number format/fixed}]
        
      \addplot graphics[xmin = 1e-4, xmax = 1e+4, ymin = 1e-4, ymax = 1e+4]
        {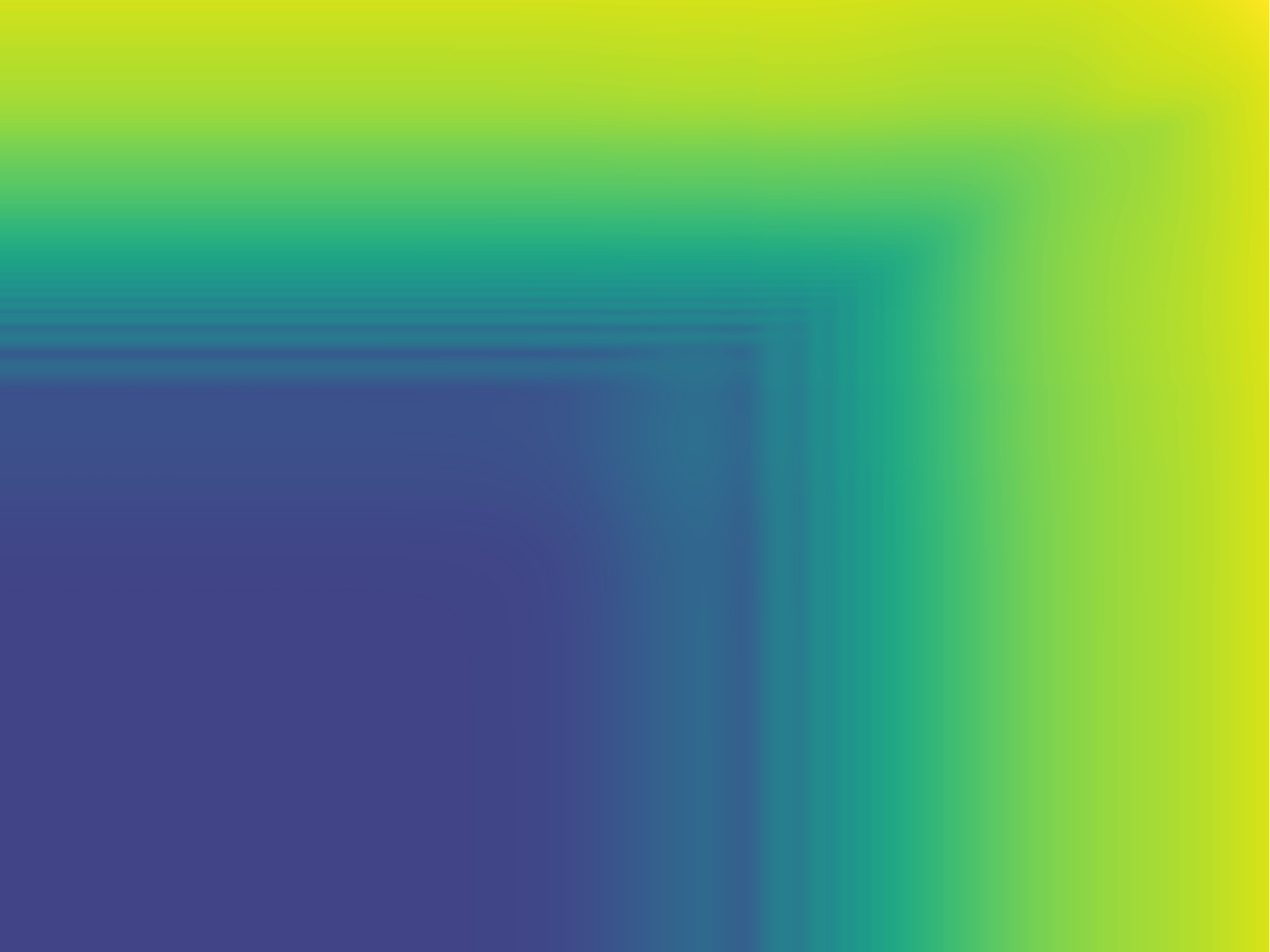};
            
  \end{loglogaxis}
\end{tikzpicture}%
  \tikzexternaldisable%

    \subcaption{\sttint{}.}
  \end{subfigure}
  \vspace{.5\baselineskip}

  \tikzexternalenable%
  \tikzsetnextfilename{time_delay_freq_g2_legend}%
  \begin{tikzpicture}
  \node[draw = none, minimum width = .1cm, inner sep = 0cm](start){};
  \node(leg) at (start.north east) [anchor = north west]{\tikz
  \begin{axis}[%
    hide axis,
    scale only axis,
    width  = 10cm,
    height = .1cm,
    point meta min = -11.4696,
    point meta max = -0.6962,
    colorbar,
    colorbar horizontal,
    colorbar style = {
      xticklabel = $10^{\pgfmathparse{\tick}
        \pgfmathprintnumber\pgfmathresult}$,
      at = {(.5, 0)},
      anchor = north},
    scaled x ticks = false,
    x tick label style = {/pgf/number format/fixed}]
  \end{axis};};
  \node[draw = none, minimum width = 0cm, inner sep = 0cm](end)
    at (leg.north east) [anchor = north west]{};
\end{tikzpicture}%
  \tikzexternaldisable%

  \caption{Frequency domain results of the second transfer functions for the 
    time-delayed heated rod.}
  \label{fig:time_delay_freq_g2}
\end{figure}

\Cref{fig:time_delay_time} shows the results in time domain for the input
signal
\begin{align*}
  u(t) & = \begin{bmatrix}
    0.05(\cos(10 t) + \cos(5 t)) &
    0.05(\sin(10 t) + \sin(5 t)) &
    0.01 &
    0.01 &
    0.01
    \end{bmatrix}^{\trans}.
\end{align*}
This time, \mtxint{} is a few orders of magnitude better than the other methods
in the overall behavior closely followed by \sttint{}, then \bwtint{} and
\sftint{}.
But in terms of the maximum errors (\Cref{tab:time_delay}), \sftint{} and
\sttint{} are almost one order of magnitude better than \mtxint{}.
The results are different in frequency domain.
\Cref{fig:time_delay_freq_g1} shows the results for the first subsystem transfer
functions.
While \bwtint{} still performs worst, \sftint{} performs now better than 
\mtxint{}, which is also shown in \Cref{tab:time_delay}.
The error of \sttint{} is mainly following \mtxint{} over the whole frequency
range and only minorly diverging at the end.
This changes for the second transfer functions in 
\Cref{fig:time_delay_freq_g2}.
Here, \mtxint{} performs best with \sttint{} having comparable accuracy.
\bwtint{} and \sftint{} are worse than the other two approaches but both with a
comparable error.
In terms of the maximum errors (\Cref{tab:time_delay}), \bwtint{} and \mtxint{}
perform the best.

Further results for tangential interpolation of a related example using
different choices of interpolation points can be found
in~\cite[Sec.~5.6.5.2]{Wer21}.

\begin{table}[t]
  \caption{Maximum relative errors for the time-delayed heated rod.}
  \label{tab:time_delay}
  \centering
  {\renewcommand{\arraystretch}{1.25}%
  \setlength{\tabcolsep}{.8em}
  \begin{tabular}{crrrr}
    \hline
    &
      \multicolumn{1}{c}{$\boldsymbol{\mtxint}$} &
      \multicolumn{1}{c}{$\boldsymbol{\bwtint}$} &
      \multicolumn{1}{c}{$\boldsymbol{\sftint}$} &
      \multicolumn{1}{c}{$\boldsymbol{\sttint}$} \\
    \hline\noalign{\medskip}
    $\err_{\rm{sim}}$ &
      $1.2251\texttt{e-}06$ &
      $1.2111\texttt{e-}05$ &
      $2.1393\texttt{e-}07$ &
      $5.2099\texttt{e-}07$ \\
    $\err_{\rm{G_{1}}}$ &
      $5.0048\texttt{e-}03$ &
      $4.2078\texttt{e-}02$ &
      $5.1565\texttt{e-}04$ &
      $3.6159\texttt{e-}02$ \\
    $\err_{\rm{G_{2}}}$ &
      $8.6292\texttt{e-}03$ &
      $8.3005\texttt{e-}02$ &
      $4.2940\texttt{e-}02$ &
      $2.0130\texttt{e-}01$ \\
    \noalign{\medskip}\hline\noalign{\smallskip}
  \end{tabular}}
\end{table}

%%%%%%%%%%%%%%%%%%%%%%%%%%%%%%%%%%%%%%%%%%%%%%%%%%%%%%%%%%%%%%%%%%%%%%%%%%%%%%%%

\subsection{Damped mass-spring system with bilinear springs}

As the third and final example, we consider the MIMO bilinear damped mass-spring
system from~\cite{BenGW21a}.
The system has a mechanical second-order structure as the 
example~\cref{eqn:bsosys} and takes the form
\begin{align} \label{eqn:msd}
  \begin{aligned}
    M \ddot{x}(t) + D \dot{x}(t) + K x(t) & =  N_{\rm{p}, 1} x(t) u_{1}(t) +
      N_{\rm{p}, 2} x(t) u_{2}(t) + B_{\rm{u}} u(t), \\
    y(t) & = C_{\rm{p}} x(t),
  \end{aligned}
\end{align}
where $M, D, K \in \R^{n \times n}$ are symmetric positive definite matrices
chosen as in~\cite{MehS05}.
The external forces are applied to the first and last masses, $B_{\rm{u}} = 
[e_{1}, -e_{n}]$, the displacement of the second and fifth
masses is observed, $C_{\rm{p}} = [e_{2}, e_{5}]^{\trans}$; thus the system has 
$m = p = 2$ inputs and outputs.
The bilinear springs are chosen to be
\begin{align*}
  \begin{aligned}
    N_{\rm{p}, 1} & = -S_{1} K S_{1} & \text{and} &&
      N_{\rm{p}, 2} & = S_{2} K S_{2},
  \end{aligned}
\end{align*}
where $S_{1}$ is a diagonal matrix with entries $\texttt{linspace(0.2,0,n)}$ and
$S_{2}$ a diagonal matrix with $\texttt{linspace(0,0.2,n)}$.
For the experiments, we chose $n = 1\,000$.

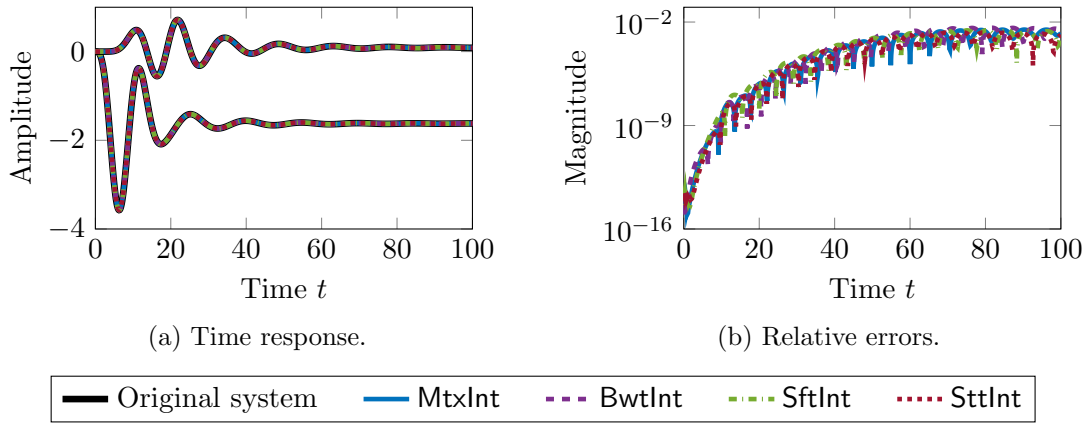
\begin{figure}[t]
  \centering
  \begin{subfigure}[b]{.49\textwidth}
    \centering
  \tikzexternalenable%
  \tikzsetnextfilename{msd_time_sim}%
  \begin{tikzpicture}
  \pgfplotstableread{graphics/data/msd_time_sim.dat}\tableSIM
  
  \begin{axis}[%
    width  = .675\textwidth,
    height = .4\textwidth,
    scale only axis,
    xmin = 0,
    xmax = 100,
    ymin = -4,
    ymax = 1,
    xminorticks = false,
    yminorticks = false,
    xlabel = {Time $t$},
    ylabel = {Amplitude},
    ylabel style   = {yshift = -.3em},
    scaled x ticks = false,
    x tick label style = {/pgf/number format/fixed},
    cycle list name    = plotlist]
    
    \foreach \p in {1, 2}{
      \foreach[evaluate=\y as \res using int(\y + \p)] \y in {0, 2, ..., 8} {
        \addplot table[x index = 0, y index = \res] {\tableSIM};
      }
    }
  \end{axis}
\end{tikzpicture}%
  \tikzexternaldisable%

    \subcaption{Time response.}
  \end{subfigure}
  \hfill
  \begin{subfigure}[b]{.49\textwidth}
    \centering
  \tikzexternalenable%
  \tikzsetnextfilename{msd_time_err}%
  \begin{tikzpicture}
  \pgfplotstableread{graphics/data/msd_time_err.dat}\tableRELERR
  
  \begin{semilogyaxis}[%
    width  = .675\textwidth,
    height = .4\textwidth,
    scale only axis,
    xmin = 0,
    xmax = 100,
    ymin = 1e-16,
    ymax = 1e-1,
    xminorticks = false,
    yminorticks = false,
    xlabel = {Time $t$},
    ylabel = {Magnitude},
    ylabel style = {yshift = -.3em},
    scaled x ticks = false,
    x tick label style = {/pgf/number format/fixed},
    cycle list name    = plotlist]
        
    \pgfplotsset{cycle list shift = 1}
    \foreach \y in {1, 2, 3, 4} {
      \addplot table[x index = 0, y index = \y] {\tableRELERR};
    }
  \end{semilogyaxis}
\end{tikzpicture}%
  \tikzexternaldisable%

    \subcaption{Relative errors.}
  \end{subfigure}
  \vspace{.5\baselineskip}

  \tikzexternalenable%
  \tikzsetnextfilename{msd_time_legend}%
  \begin{tikzpicture}  
  \begin{axis}[%
    hide axis,
    scale only axis,
    width = 1mm,
    legend columns = 5, 
    legend style = {
      at     = {(0,0)},
      anchor = center,
      /tikz/every even column/.append style = {column sep = 0.5cm}},
    cycle list name = plotlist]
    
    \pgfplotsinvokeforeach{1,...,5}{\addplot coordinates {(0,0)};}
        
    \addlegendentry{Original system};
    \addlegendentry{\mtxint};
    \addlegendentry{\bwtint};
    \addlegendentry{\sftint};
    \addlegendentry{\sttint};
  \end{axis}
\end{tikzpicture}%
  \tikzexternaldisable%

  \caption{Time domain simulation results for the damped mass-spring system.}
  \label{fig:msd_time}
\end{figure}

It has already been shown in~\cite{BenGW21a} that only the 
structure-preserving approximations give reasonable results for this example.
Therefore, we only compare the structured approaches in this paper, i.e.,
all reduced-order models also have the mechanical system structure 
as~\cref{eqn:msd}.
The reduced-order models are constructed as follows:

\begin{description}
  \item[MtxInt] with the interpolation points $\pm \texttt{logspace(-4, 4, 2)}
    \i$ for the first and second subsystem transfer functions, which yields the
    reduced order $r_{\sf{mtx}} = 24$.
  \item[BwtInt] with the interpolation points $\pm \texttt{logspace(-4, 4, 4)} 
    \i$ for the first and second subsystem transfer functions such that the
    reduced order is $r_{\sf{bwt}} = 24$.
  \item[SftInt] with the interpolation points $\pm \texttt{logspace(-4, 4, 6)} 
    \i$ and the scaling vectors $d^{(i)} = \mathds{1}_{m}$ for the first and 
    second subsystem transfer functions such that the reduced order is 
    $r_{\sf{sft}} = 24$.
  \item[SttInt] with interpolation points $\pm \texttt{logspace(-4, 4, 6)} 
    \i$ and the scaling vectors $d^{(i)} = b^{(i)}$ for the first and second 
    subsystem transfer functions such that the reduced order is
    $r_{\sf{stt}} = 24$.
\end{description}
To preserve the symmetry of the system matrices, only one-sided projections
have been used for the construction.
For all directions, normalized random vectors have been generated by drawing
their entries from a uniform distribution on $[0, 1]$.
All reduced-order models have the same order.

\Cref{fig:msd_time} shows the time simulation results for
\begin{align*}
  u(t) & = \begin{bmatrix} \sin(200 t) + 200 \\ -\cos(200 t) - 200
    \end{bmatrix}.
\end{align*}
All reduced-order models yield accurate results with practically the 
same approximation quality.
As \Cref{tab:msd} shows, the new tangential approaches perform a little bit 
better than \mtxint{} but still have the same order of accuracy.
Also, in the frequency domain, the tangential interpolation as well as the
matrix interpolation behave in principle all the same, where the matrix
interpolation is again a bit worse than the tangential approaches as it can be
seen in \Cref{fig:msd_freq_g1,fig:msd_freq_g2}, and \Cref{tab:msd}.

Further results for tangential interpolation of a related example using
different choices of interpolation points can be found
in~\cite[Sec.~5.6.5.1]{Wer21}.

\begin{figure}[t]
  \centering
  \begin{subfigure}[b]{.49\textwidth}
    \centering
  \tikzexternalenable%
  \tikzsetnextfilename{msd_freq_g1_tf}%
  \begin{tikzpicture}
  \pgfplotstableread{graphics/data/msd_freq_g1_tf.dat}\tableTF
  
  \begin{loglogaxis}[%
    width  = .675\textwidth,
    height = .4\textwidth,
    scale only axis,
    xmin = 1e-2,
    xmax = 1e+2,
    ymin = 1e-10,
    ymax = 1e+0,
    xminorticks = false,
    yminorticks = false,
    xlabel = {Frequency $\omega_{1}$ (rad/s)},
    ylabel = {Magnitude},
    ylabel style   = {yshift = -.3em},
    scaled x ticks = false,
    x tick label style = {/pgf/number format/fixed},
    cycle list name    = plotlist]
    
    \foreach \y in {1, 2, ..., 5} {
      \addplot table[x index = 0, y index = \y] {\tableTF};
    }
  \end{loglogaxis}
\end{tikzpicture}%
  \tikzexternaldisable%

    \subcaption{Frequency response.}
  \end{subfigure}
  \hfill
  \begin{subfigure}[b]{.49\textwidth}
    \centering
  \tikzexternalenable%
  \tikzsetnextfilename{msd_freq_g1_err}%
  \begin{tikzpicture}
  \pgfplotstableread{graphics/data/msd_freq_g1_err.dat}\tableRELERR
  
  \begin{loglogaxis}[%
    width  = .675\textwidth,
    height = .4\textwidth,
    scale only axis,
    xmin = 1e-2,
    xmax = 1e+2,
    ymin = 1e-20,
    ymax = 1e-2,
    xminorticks = false,
    yminorticks = false,
    xlabel = {Frequency $\omega_{1}$ (rad/s)},
    ylabel = {Magnitude},
    ylabel style   = {yshift = -.3em},
    scaled x ticks = false,
    x tick label style = {/pgf/number format/fixed},
    cycle list name    = plotlist]
    
    \pgfplotsset{cycle list shift = 1}
    \foreach \y in {1, 2, 3, 4} {
      \addplot table[x index = 0, y index = \y] {\tableRELERR};
    }
  \end{loglogaxis}
\end{tikzpicture}%
  \tikzexternaldisable%

    \subcaption{Relative errors.}
  \end{subfigure}
  \vspace{.5\baselineskip}

  \tikzexternalenable%
  \tikzsetnextfilename{msd_freq_g1_legend}%
  \begin{tikzpicture}  
  \begin{axis}[%
    hide axis,
    scale only axis,
    width = 1mm,
    legend columns = 5, 
    legend style = {
      at     = {(0,0)},
      anchor = center,
      /tikz/every even column/.append style = {column sep = 0.5cm}},
    cycle list name = plotlist]
    
    \pgfplotsinvokeforeach{1,...,5}{\addplot coordinates {(0,0)};}
        
    \addlegendentry{Original system};
    \addlegendentry{\mtxint};
    \addlegendentry{\bwtint};
    \addlegendentry{\sftint};
    \addlegendentry{\sttint};
  \end{axis}
\end{tikzpicture}%
  \tikzexternaldisable%

  \caption{Frequency domain results of the first transfer functions for the 
    damped mass-spring system.}
  \label{fig:msd_freq_g1}
\end{figure}
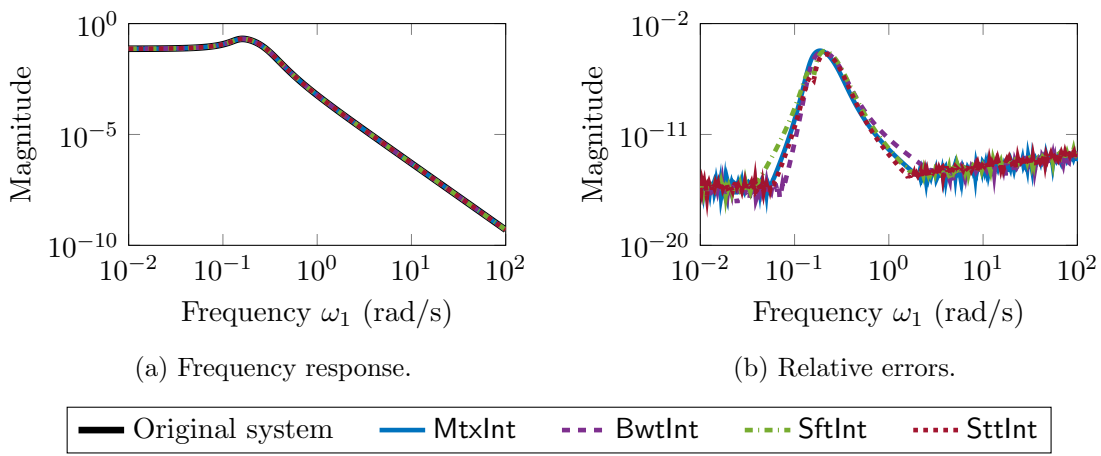

\begin{figure}[t]
  \centering
  \begin{subfigure}[b]{.49\textwidth}
    \centering
  \tikzexternalenable%
  \tikzsetnextfilename{msd_freq_g2_err_mtx}%
  \begin{tikzpicture}
  \begin{loglogaxis}[
    view   = {0}{90},
    width  = .675\textwidth,
    height = .4\textwidth,
    scale only axis,
    axis on top,
    xmin   = 1e-2,
    xmax   = 1e+2,
    ymin   = 1e-2,
    ymax   = 1e+2,
    xtick  = {1e-2, 1e-1, 1e0, 1e+1, 1e+2},
    ytick  = {1e-2, 1e-1, 1e0, 1e+1, 1e+2},
    xminorticks = false,
    yminorticks = false,
    xlabel = {Frequency $\omega_{1}$ (rad/s)},
    ylabel = {Frequency $\omega_{2}$ (rad/s)},
    ylabel style = {yshift = -.3em},
    scaled x ticks = false,
    x tick label style = {/pgf/number format/fixed}]
        
      \addplot graphics[xmin = 1e-2, xmax = 1e+2, ymin = 1e-2, ymax = 1e+2]
        {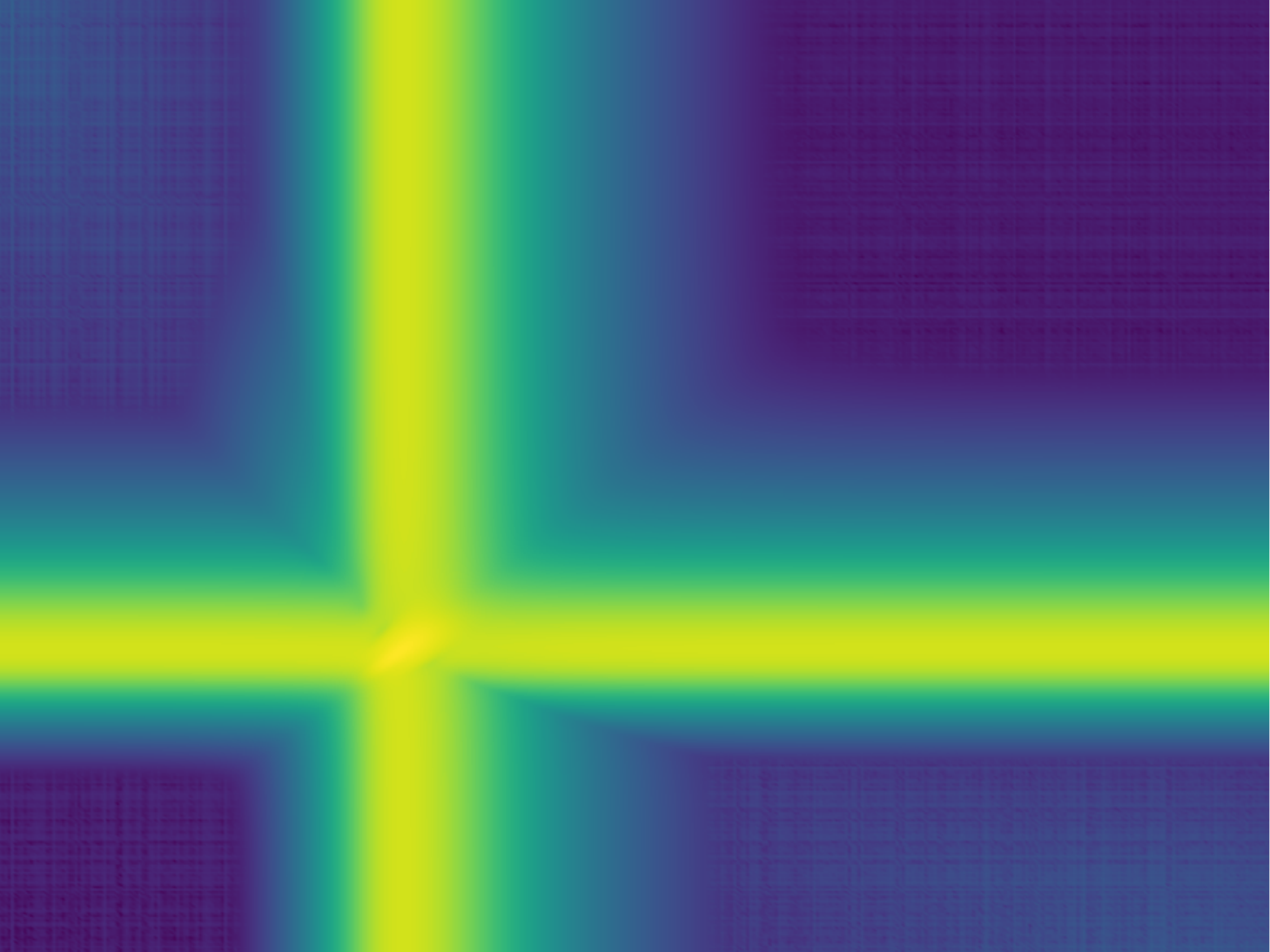};
            
  \end{loglogaxis}
\end{tikzpicture}%
  \tikzexternaldisable%

    \subcaption{\mtxint{}.}
  \end{subfigure}
  \hfill
  \begin{subfigure}[b]{.49\textwidth}
    \centering
  \tikzexternalenable%
  \tikzsetnextfilename{msd_freq_g2_err_bwt}%
  \begin{tikzpicture}
  \begin{loglogaxis}[
    view   = {0}{90},
    width  = .675\textwidth,
    height = .4\textwidth,
    scale only axis,
    axis on top,
    xmin   = 1e-2,
    xmax   = 1e+2,
    ymin   = 1e-2,
    ymax   = 1e+2,
    xtick  = {1e-2, 1e-1, 1e0, 1e+1, 1e+2},
    ytick  = {1e-2, 1e-1, 1e0, 1e+1, 1e+2},
    xminorticks = false,
    yminorticks = false,
    xlabel = {Frequency $\omega_{1}$ (rad/s)},
    ylabel = {Frequency $\omega_{2}$ (rad/s)},
    ylabel style = {yshift = -.3em},
    scaled x ticks = false,
    x tick label style = {/pgf/number format/fixed}]
        
      \addplot graphics[xmin = 1e-2, xmax = 1e+2, ymin = 1e-2, ymax = 1e+2]
        {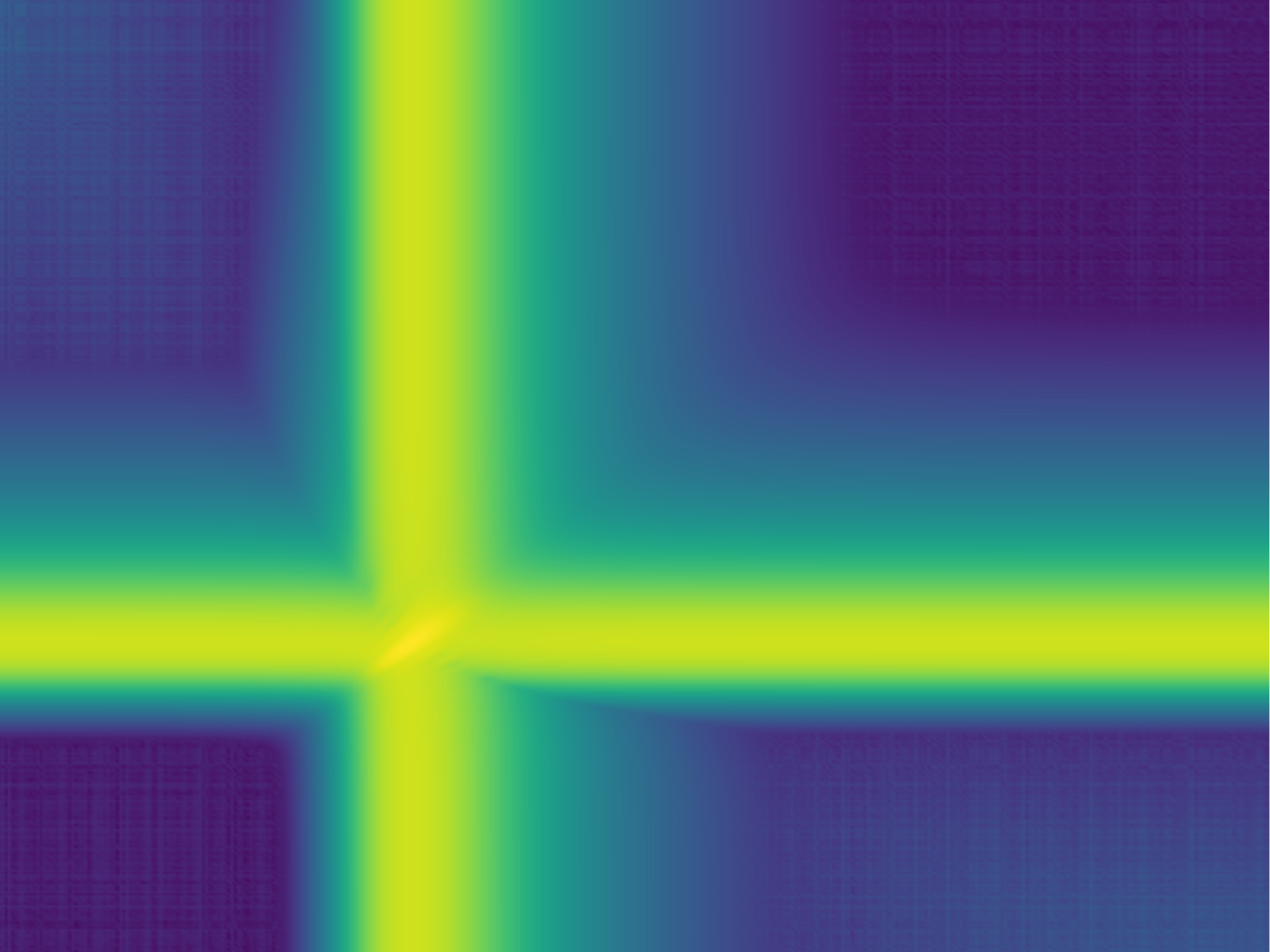};
            
  \end{loglogaxis}
\end{tikzpicture}%
  \tikzexternaldisable%

    \subcaption{\bwtint{}.}
  \end{subfigure}
    
  \begin{subfigure}[b]{.49\textwidth}
    \centering
  \tikzexternalenable%
  \tikzsetnextfilename{msd_freq_g2_err_sft}%
  \begin{tikzpicture}
  \begin{loglogaxis}[
    view   = {0}{90},
    width  = .675\textwidth,
    height = .4\textwidth,
    scale only axis,
    axis on top,
    xmin   = 1e-2,
    xmax   = 1e+2,
    ymin   = 1e-2,
    ymax   = 1e+2,
    xtick  = {1e-2, 1e-1, 1e0, 1e+1, 1e+2},
    ytick  = {1e-2, 1e-1, 1e0, 1e+1, 1e+2},
    xminorticks = false,
    yminorticks = false,
    xlabel = {Frequency $\omega_{1}$ (rad/s)},
    ylabel = {Frequency $\omega_{2}$ (rad/s)},
    ylabel style = {yshift = -.3em},
    scaled x ticks = false,
    x tick label style = {/pgf/number format/fixed}]
        
      \addplot graphics[xmin = 1e-2, xmax = 1e+2, ymin = 1e-2, ymax = 1e+2]
        {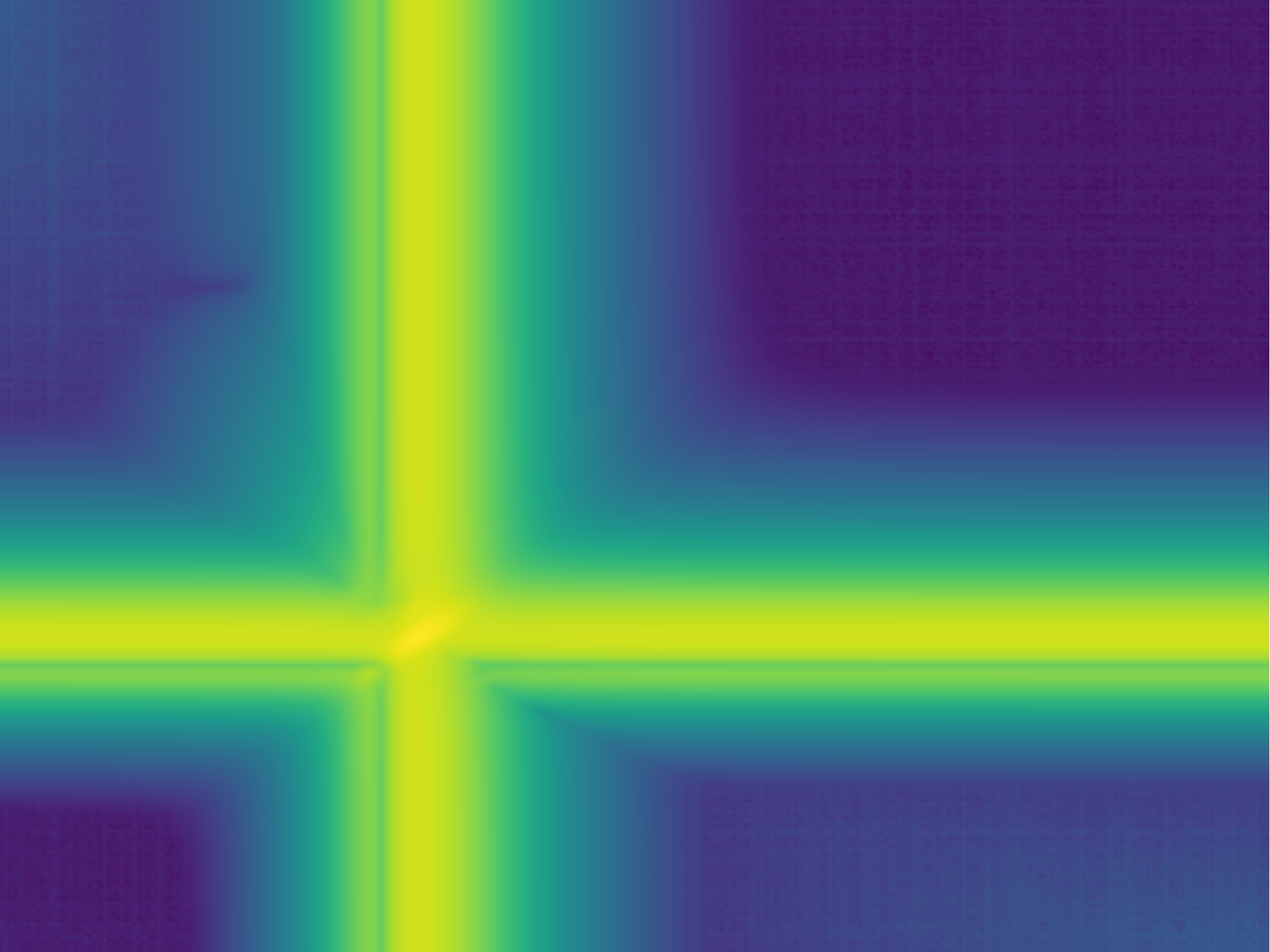};
            
  \end{loglogaxis}
\end{tikzpicture}%
  \tikzexternaldisable%

    \subcaption{\sftint{}.}
  \end{subfigure}
  \hfill
  \begin{subfigure}[b]{.49\textwidth}
    \centering
  \tikzexternalenable%
  \tikzsetnextfilename{msd_freq_g2_err_stt}%
  \begin{tikzpicture}
  \begin{loglogaxis}[
    view   = {0}{90},
    width  = .675\textwidth,
    height = .4\textwidth,
    scale only axis,
    axis on top,
    xmin   = 1e-2,
    xmax   = 1e+2,
    ymin   = 1e-2,
    ymax   = 1e+2,
    xtick  = {1e-2, 1e-1, 1e0, 1e+1, 1e+2},
    ytick  = {1e-2, 1e-1, 1e0, 1e+1, 1e+2},
    xminorticks = false,
    yminorticks = false,
    xlabel = {Frequency $\omega_{1}$ (rad/s)},
    ylabel = {Frequency $\omega_{2}$ (rad/s)},
    ylabel style = {yshift = -.3em},
    scaled x ticks = false,
    x tick label style = {/pgf/number format/fixed}]
        
      \addplot graphics[xmin = 1e-2, xmax = 1e+2, ymin = 1e-2, ymax = 1e+2]
        {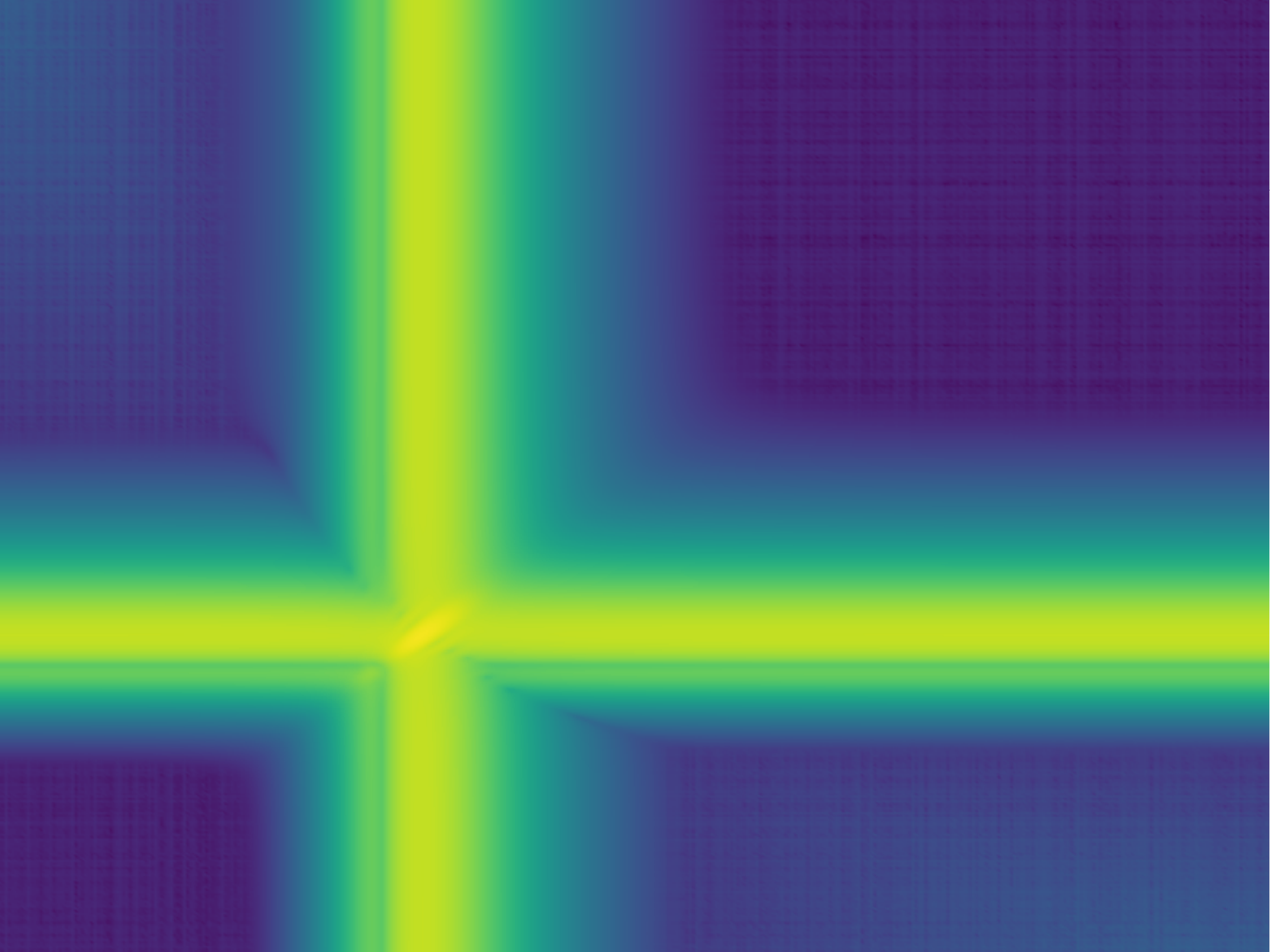};
            
  \end{loglogaxis}
\end{tikzpicture}%
  \tikzexternaldisable%

    \subcaption{\sttint{}.}
  \end{subfigure}
  \vspace{.5\baselineskip}

  \tikzexternalenable%
  \tikzsetnextfilename{msd_freq_g2_legend}%
  \begin{tikzpicture}
  \node[draw = none, minimum width = 0cm, inner sep = 0cm](start){};
  \node(leg) at (start.north east) [anchor = north west]{\tikz
  \begin{axis}[%
    hide axis,
    scale only axis,
    width  = 10cm,
    height = .1cm,
    point meta min = -16.1177,
    point meta max = -3.3418,
    colorbar,
    colorbar horizontal,
    colorbar style = {
      xticklabel = $10^{\pgfmathparse{\tick}
        \pgfmathprintnumber\pgfmathresult}$,
      at = {(.5, 0)},
      anchor = north},
    scaled x ticks = false,
    x tick label style = {/pgf/number format/fixed}]
  \end{axis};};
  \node[draw = none, minimum width = 0cm, inner sep = 0cm](end)
    at (leg.north east) [anchor = north west]{};
\end{tikzpicture}%
  \tikzexternaldisable%

  \caption{Frequency domain results of the second transfer functions for the 
    damped mass-spring system.}
  \label{fig:msd_freq_g2}
\end{figure}

\begin{table}[t]
  \caption{Maximum relative errors for the damped mass-spring system.}
  \label{tab:msd}
  \centering
  {\renewcommand{\arraystretch}{1.25}%
  \setlength{\tabcolsep}{.8em}
  \begin{tabular}{crrrr}
    \hline
    &
      \multicolumn{1}{c}{$\boldsymbol{\mtxint}$} &
      \multicolumn{1}{c}{$\boldsymbol{\bwtint}$} &
      \multicolumn{1}{c}{$\boldsymbol{\sftint}$} &
      \multicolumn{1}{c}{$\boldsymbol{\sttint}$} \\
    \hline\noalign{\medskip}
    $\err_{\rm{sim}}$ &      
      $3.0779\texttt{e-}03$ &
      $4.0813\texttt{e-}03$ &
      $2.8056\texttt{e-}03$ &
      $1.9722\texttt{e-}03$ \\
    $\err_{\rm{G_{1}}}$ &      
      $6.3187\texttt{e-}05$ &
      $5.0642\texttt{e-}05$ &
      $5.7109\texttt{e-}05$ &
      $3.2660\texttt{e-}05$ \\
    $\err_{\rm{G_{2}}}$ &
      $4.5523\texttt{e-}04$ &
      $4.3227\texttt{e-}04$ &
      $4.2240\texttt{e-}04$ &
      $2.8460\texttt{e-}04$ \\
    \noalign{\medskip}\hline\noalign{\smallskip}
  \end{tabular}}
\end{table}

%%%%%%%%%%%%%%%%%%%%%%%%%%%%%%%%%%%%%%%%%%%%%%%%%%%%%%%%%%%%%%%%%%%%%%%%%%%%%%%%
% CONCLUSIONS.                                                                 %
%%%%%%%%%%%%%%%%%%%%%%%%%%%%%%%%%%%%%%%%%%%%%%%%%%%%%%%%%%%%%%%%%%%%%%%%%%%%%%%%

\section{Conclusions}%
\label{sec:conclusions}

We developed the tangential interpolation framework for structure-preserving
interpolation of multi-input/multi-output bilinear control systems.
By revisiting the classical tangential interpolation in  frequency domain
and its interpretation in time domain, we developed a new unifying
tangential interpolation framework for structure-preserving model reduction
of MIMO bilinear systems and proved conditions on the model reduction subspaces
to satisfy interpolation conditions in this new framework.
We also used the new framework to obtain results on the blockwise tangential
interpolation approach and extended the theory from the literature to structured
bilinear systems.
While the new framework was motivated by classical tangential interpolation in
frequency domain and its interpretation in time domain, the generality
of this new approach and the corresponding theorems is that not only the
existing framework to tangential interpolation can be obtained as a special case
of the new framework but also it offers even more flexibility and options in the
model reduction procedure than explored in this paper.
The numerical examples illustrate that the new approach is as good as and even
better in many situation than the full matrix or the blockwise tangential
interpolation methods.
In other words, the new approach gives sufficiently accurate results while
allowing more freedom in choosing the order of the reduced-order model compared
to the existing approaches.

While we used a rather simple choice for interpolation points (logarithmically
equidistant on the imaginary axis), the question of better or 
even optimal choices of interpolation points remains open.
Other choices for interpolation point selections, heuristically inspired by
the linear system case, have been used in numerical examples in~\cite{Wer21}.
Also, in the setting of tangential interpolation, the question of appropriate 
tangential directions needs to be answered.
For our new framework, we gave two approaches for choosing the 
scaling vectors.
Still the influence of the choice of the scaling vectors needs to be
investigated as well as the question of an optimal choice.
These issue will be considered in future works.

%%%%%%%%%%%%%%%%%%%%%%%%%%%%%%%%%%%%%%%%%%%%%%%%%%%%%%%%%%%%%%%%%%%%%%%%%%%%%%%%
% ACKNOWLEDGEMENTS                                                             %
%%%%%%%%%%%%%%%%%%%%%%%%%%%%%%%%%%%%%%%%%%%%%%%%%%%%%%%%%%%%%%%%%%%%%%%%%%%%%%%%

\section*{Acknowledgment}%
\addcontentsline{toc}{section}{Acknowledgment}

Benner and Werner were supported by the German Research Foundation
(DFG) Research Training Group~2297 \textquotedblleft{}Mathematical Complexity
Reduction (MathCoRe)\textquotedblright{}, Magdeburg.
Gugercin was supported in parts by National Science Foundation under Grant
No.~DMS-1819110.
Part of this material is based upon work supported by the National Science 
Foundation under Grant No.~DMS-1439786 and by the Simons Foundation Grant
No.~50736 while all authors were in residence at the Institute for
Computational and Experimental Research in Mathematics in Providence, RI, during
the \textquotedblleft{}Model and dimension reduction in 
uncertain and dynamic systems\textquotedblright{} program.

Parts of this work were carried out while Werner was at the Max Planck Institute
for Dynamics of Complex Technical Systems in Magdeburg, Germany.

We would like to thank Jens Saak for providing the data for the bilinear 
semi-discretized steel profile and inspiring discussions about the
interpretation of tangential interpolation for linear systems.

%%%%%%%%%%%%%%%%%%%%%%%%%%%%%%%%%%%%%%%%%%%%%%%%%%%%%%%%%%%%%%%%%%%%%%%%%%%%%%%%
% REFERENCES                                                                   %
%%%%%%%%%%%%%%%%%%%%%%%%%%%%%%%%%%%%%%%%%%%%%%%%%%%%%%%%%%%%%%%%%%%%%%%%%%%%%%%%

\addcontentsline{toc}{section}{References}
\bibliographystyle{plainurl}
\bibliography{bibtex/myref}

\end{document}